\title{Semi-integral points of bounded height on toric varieties}
\subjclass[2020]{14G05 (11D45, 14G10, 11D57).}
\author{Alec Shute and Sam Streeter}
\address{School of Mathematics and Heilbronn Institute for Mathematical Research, University of Bristol, Fry Building, Woodland Road, BS8 1UG, UK}
\email{alec.shute@bristol.ac.uk}
\email{sam.streeter@bristol.ac.uk}
\theoremstyle{definition}
\newtheorem{mydef}{Definition}
\newtheorem{note}[mydef]{Note}
\newtheorem{example}[mydef]{Example}
\theoremstyle{plain}
\newtheorem{theorem}[mydef]{Theorem}
\newtheorem{proposition}[mydef]{Proposition}
\newtheorem{lemma}[mydef]{Lemma}
\newtheorem{corollary}[mydef]{Corollary}
\newtheorem{conjecture}[mydef]{Conjecture}
\numberwithin{mydef}{section}
\numberwithin{equation}{section}
\numberwithin{equation}{section}
\numberwithin{mydef}{section}
\let\originalleft\left
\let\originalright\right
\renewcommand{\left}{\mathopen{}\mathclose\bgroup\originalleft}
\renewcommand{\right}{\aftergroup\egroup\originalright}
\renewcommand{\Re}{\operatorname{Re}}
\renewcommand{\l}{\left}
\renewcommand{\r}{\right}
\newcommand{\f}{\frac}
\newcommand{\eps}{\varepsilon}
\DeclareMathOperator{\Br}{Br}
\DeclareMathOperator{\cl}{cl}
\DeclareMathOperator{\Div}{Div}
\DeclareMathOperator{\Eff}{Eff}
\DeclareMathOperator{\Gal}{Gal}
\DeclareMathOperator{\Hom}{Hom}
\DeclareMathOperator{\om}{\boldsymbol{\omega}}
\DeclareMathOperator{\Pic}{Pic}
\DeclareMathOperator{\Proj}{Proj}
\DeclareMathOperator{\rank}{rank}
\DeclareMathOperator{\red}{red}
\DeclareMathOperator{\Spec}{Spec}
\DeclareMathOperator{\vol}{vol}
\DeclareMathOperator{\PL}{PL}
\def\Z{\ifmmode{{\mathbb Z}}\else{${\mathbb Z}$}\fi}
\def\Q{\ifmmode{{\mathbb Q}}\else{${\mathbb Q}$}\fi}
\def\P{\ifmmode{{\mathbb P}}\else{${\mathbb P}$}\fi}
\def\H{\ifmmode{{\mathrm H}}\else{${\mathrm H}$}\fi}
\def\R{\ifmmode{{\mathbb R}}\else{${\mathbb R}$}\fi}
\def\F{\ifmmode{{\mathbb F}}\else{${\mathbb F}$}\fi}
\def\O{\ifmmode{{\calO}}\else{${\calO}$}\fi}
\newcommand{\A}{\mathbb{A}}
\newcommand{\m}{\mathbf{m}}
\newcommand{\calA}{{\mathcal A}}
\newcommand{\calD}{{\mathcal D}}
\newcommand{\calL}{{\mathcal L}}
\newcommand{\calM}{{\mathcal M}}
\newcommand{\calO}{{\mathcal O}}
\newcommand{\calP}{{\mathcal P}}
\newcommand{\calT}{{\mathcal T}}
\newcommand{\calU}{{\mathcal U}}
\newcommand{\calV}{{\mathcal V}}
\newcommand{\calX}{{\mathcal X}}
\newcommand{\calY}{{\mathcal Y}}
\newcommand{\calZ}{{\mathcal Z}}
\newcommand{\ZZ}{\mathbb{Z}}
\newcommand{\QQ}{\mathbb{Q}}
\newcommand{\RR}{\mathbb{R}}
\newcommand{\CC}{\mathbb{C}}
\newcommand{\FF}{\mathbb{F}}
\renewcommand{\AA}{\mathbb{A}}
\newcommand{\PP}{\mathbb{P}}
\newcommand{\sD}{\mathcal{D}}
\newcommand{\sX}{\mathcal{X}}
\newcommand{\scrB}{\mathscr{B}}
\newcommand{\scrL}{\mathscr{L}}
\DeclareMathOperator{\inv}{inv}
\renewcommand{\epsilon}{\varepsilon}
\DeclareMathOperator{\cc}{\textup{C}}
\DeclareMathOperator{\dd}{\textup{D}}
\DeclareMathOperator{\weak}{\mathbf{w}}
\DeclareMathOperator{\strong}{\mathbf{s}}
\DeclareMathOperator{\geom}{\mathbf{g}}
\DeclareMathOperator{\str}{\textup{st}}
    \DeclareFontFamily{U}{wncy}{}
    \DeclareFontShape{U}{wncy}{m}{n}{<->wncyr10}{}
    \DeclareSymbolFont{mcy}{U}{wncy}{m}{n}
    \DeclareMathSymbol{\Sh}{\mathord}{mcy}{"58} 
    \DeclareMathSymbol{\B}{\mathord}{mcy}{"42}
\begin{document}
\begin{abstract}
We prove asymptotics for semi-integral points of bounded height on toric varieties. We verify the Manin-type conjecture of Pieropan, Smeets, Tanimoto and V\'arilly-Alvarado for smooth and certain singular toric orbifolds upon replacing the leading constant with the one predicted by Chow, Loughran, Takloo-Bighash and Tanimoto.
\end{abstract}
\maketitle
\setcounter{tocdepth}{1}
\tableofcontents
\section{Introduction}

This paper concerns the intersection of two highly active areas in arithmetic geometry. One is rational points of bounded height on varieties: here we have Manin's conjecture, which predicts an asymptotic for the number of rational points of bounded height on Fano varieties. The other is semi-integral points: here we have several notions interpolating between rational and integral points, with the dual goals of better understanding integral points and of studying arithmetically special solutions to equations. Two prominent notions are \emph{Campana points}, arising from Campana's study of log-geometric orbifolds associated to fibrations \cite{C05}, and \emph{Darmon points}, originating in work of Darmon \cite{DAR} on generalised Fermat equations.

Point counting and semi-integrality were recently brought together by a conjecture of Pieropan, Smeets, Tanimoto and V\'arilly-Alvarado (Conjecture~\ref{conj:pstva}), henceforth referred to as the \emph{PSTVA conjecture}, which provides an analogue of Manin's conjecture for Campana points on log Fano orbifolds. Along with posing the conjecture, the aforementioned authors verified it for orbifolds coming from vector group compactifications \cite[Thm.~1.2]{PSTVA}. This followed earlier work of Browning, Van Valckenborgh and Yamagishi \cite{VV,BVV,BY} for linear orbifolds. Subsequent work of the authors of this article \cite{SHU1,SHU2,STR} supported the exponents of the PSTVA conjecture while raising questions about the leading constant. Further counting results for Campana points were established by Pieropan and Schindler \cite{PS} (for split toric varieties) and Xiao \cite{XIA} (for compactifications of the Heisenberg group). Chow, Loughran, Takloo-Bighash and Tanimoto \cite{CLTBT} established asymptotics for wonderful compactifications of semisimple algebraic groups and conjectured a new form for the leading constant. We henceforth refer to the PSTVA conjecture with the leading constant replaced by that of Chow, Loughran, Takloo-Bighash and Tanimoto as the \emph{modified PSTVA conjecture}.

Before presenting our results, we briefly relate other recent developments in semi-integral points. The Brauer--Manin obstruction to local-global principles for semi-integral points, pertinent to the leading constant in the modified PSTVA conjecture, was developed by Mitankin, Nakahara and the second author \cite{MNS}. Like the current formulation of Manin's conjecture, the (modified) PSTVA conjecture allows for the removal of a \emph{thin set}; the study of thin sets of Campana points was initiated by Nakahara and the second author \cite{NS}, where it was shown that Campana points on certain log Fano orbifolds are non-thin. In \cite{Moerman}, Moerman defined and studied generalisations of semi-integral points called \emph{$\calM$-points}, generalising the link with the Hilbert property established in \cite{MNS} and establishing an array of results on local-global properties for split toric varieties. Semi-integral points, particularly Darmon points, are closely connected to integral points on \emph{algebraic stacks}, for which a Manin-type conjecture was proposed by Darda and Yasuda \cite{DY} which they proved for split toric stacks \cite{DY2}. Lastly, we highlight the recent proof of Manin's conjecture for integral points on toric varieties by Tim Santens \cite{SAN}, which, alongside our work, almost completes the picture in the toric case.

\subsection{Results}

We introduce and count \emph{geometric semi-integral points} (Definition~\ref{def:geompt}), so-named as intersection multiplicity conditions are imposed relative to the geometric components of the orbifold divisor. By relating these points to ordinary semi-integral points (Corollary~\ref{cor:red}), we verify the PSTVA conjecture for smooth toric orbifolds.

We generalise Batyrev and Tschinkel's proof of Manin's conjecture for toric varieties \cite[Cor.~7.4]{BT2}, just as Pieropan et.\ al.\ \cite[Thm.~1.2]{PSTVA} generalise the result of Chambert-Loir and Tschinkel \cite[Thm.~0.1]{CLT} on vector group compactifications.

Let $T$ be a torus over a number field $K$ with splitting field $E$ and $G = \Gal(E/K)$. Let $\Sigma \subset X_*\left(\overline{T}\right)_{\mathbb{R}}$ be a complete regular polytopal $G$-invariant fan. Denote by $X_{\Sigma}$ the associated smooth projective equivariant compactification of $T$ with boundary divisor $D_{\Sigma}$. Denote by $D_{\Sigma} = \cup_{i=1}^rD_i$ the decomposition of $D_{\Sigma}$ into irreducible components over $K$. Given $\mathbf{m} = \left(m_1,\dots,m_r\right) \in \mathbb{Z}_{\geq 1}^r$, define the $\mathbb{Q}$-divisor $D_{\Sigma,\mathbf{m}} = \sum_{i=1}^r\left(1-\frac{1}{m_i}\right)D_i$.

Our first result is a Manin-type asymptotic for geometric semi-integral points.

\begin{theorem} \label{thm:geom}
For $* = \cc$ (respectively, $* = \dd$) and $S \subset \Omega_K$ finite, denote by \newline $N_{\mathbf{g}}\left(\Sigma,\calX,\m,*;B\right)$ the number of geometric $\mathcal{O}_S$-Campana points (respectively, geometric $\O_S$-Darmon points) on the orbifold $\left(X_{\Sigma},D_{\Sigma,\m}\right)$ of log-anticanonical height at most $B$ with respect to some $\mathcal{O}_{S}$-model $\mathcal{X}$ not lying on $D_{\Sigma}$. Then
\[
N_{\mathbf{g}}\left(\Sigma,\calX,\m,*;B\right) \sim c_{\mathbf{g}}\left(\Sigma,\calX,\m,*\right)B\left(\log B\right)^{\rank \Pic(X_{\Sigma}) - 1}
\]
for a constant $c_{\mathbf{g}}\left(\Sigma,\calX,\m,*\right) \in \mathbb{R}_{>0}$ as in Conjecture~\ref{conj:CLTBT} as $B \rightarrow \infty$.
\end{theorem}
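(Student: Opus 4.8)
The plan is to adapt the harmonic-analytic proof of Manin's conjecture for toric varieties due to Batyrev and Tschinkel \cite{BT2}, feeding the semi-integrality conditions into the local Fourier transforms, much as \cite{PSTVA} adapts \cite{CLT} (compare also \cite{PS} for split toric Campana points). Since the $\calO_S$-model $\calX$ avoids $D_\Sigma$, every geometric semi-integral point lies in $T(K)$, so one works inside the torus throughout. Write $-K_{X_\Sigma}-D_{\Sigma,\m}=\sum_{i=1}^{r}\f{1}{m_i}[D_i]$ for the log-anticanonical class, and let $\overline{D}_\rho$, for $\rho\in\Sigma(1)$, denote the geometric boundary divisors, with $\Gal(E/K)$-orbits the $D_i$ and $m_\rho:=m_i$ whenever $\overline{D}_\rho\subseteq D_i$. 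For $\mathbf{s}$ in a tube domain over a neighbourhood of $[-K_{X_\Sigma}-D_{\Sigma,\m}]$ in $\Pic(X_\Sigma)_{\CC}$, I would introduce the multivariable height zeta function
\[
Z_{\mathbf{g},*}(\mathbf{s})\;=\;\sum_{x}H(\mathbf{s},x)^{-1},
\]
the sum running over geometric $\calO_S$-Campana (resp.\ Darmon) points $x\in T(K)$ and $H(\mathbf{s},\cdot)$ denoting the complexified toric height. The log-anticanonical height is the restriction of $H(\mathbf{s},\cdot)$ to the ray $\mathbf{s}=s\cdot[-K_{X_\Sigma}-D_{\Sigma,\m}]$, and $N_{\mathbf{g}}(\Sigma,\calX,\m,*;B)$ will be read off from this one-parameter specialisation by a Tauberian theorem.

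Next one carries out harmonic analysis on $T(\AA_K)$. Both $H(\mathbf{s},\cdot)$ and the characteristic function $\mathbf{1}_{*}$ of the geometric semi-integrality condition extend to $T(\AA_K)$ as restricted products of local factors, and — this is where the geometric formulation pays off — at each finite place $\mathbf{1}_{*}$ is locally constant and invariant under a compact open subgroup of $T(K_v)$. Applying Poisson summation over $T(K)\subset T(\AA_K)$ as in \cite{BT2}, one obtains, up to an explicit positive constant, an expression for $Z_{\mathbf{g},*}(\mathbf{s})$ as a combined sum-and-integral over the unitary automorphic characters $\chi$ of $T(\AA_K)$ that are unramified outside a fixed finite set, of the Fourier transform $\widehat{H}_{*}(\mathbf{s},\chi)=\prod_{v}\widehat{H}_{*,v}(\mathbf{s},\chi_v)$. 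As in \cite{BT2}, the whole argument then rests on computing these local factors and on bounding the decay of $\widehat{H}_{*}(\mathbf{s},\chi)$ in $\chi$.

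For the local computation, at a finite place $v$ of good reduction with $\chi_v$ unramified I would organise $\widehat{H}_{*,v}(\mathbf{s},\chi_v)$ according to the cones of $\Sigma$: on the relative interior of a cone $\sigma$, a point of $T(K_v)$ meets $\overline{D}_\rho$ with multiplicity $a_\rho\in\ZZ_{\geq 1}$ for each $\rho\in\sigma$, and the Campana (resp.\ Darmon) condition cuts this down to $a_\rho\geq m_\rho$ (resp.\ $m_\rho\mid a_\rho$). Summing the resulting geometric series and using regularity of $\Sigma$, one finds that $\widehat{H}_{*,v}(\mathbf{s},\chi_v)$ agrees, up to $1+O(q_v^{-1-\delta})$ on the relevant domain, with a product over the components $D_i$ of local Hecke $L$-factors of $T$ evaluated at the \emph{rescaled} argument $m_i s_i$. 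Forming the Euler product and treating the finitely many remaining places (bad reduction, or $\chi$ ramified) by hand — where one obtains holomorphic, explicitly bounded factors depending on $\calX$ — shows that $\widehat{H}_{*}(\mathbf{s},\chi)$ continues meromorphically to a tube domain around $[-K_{X_\Sigma}-D_{\Sigma,\m}]$, with polar locus a union of hyperplanes whose combinatorics is exactly that of \cite{BT2} after the rescaling by the $m_i$. It is precisely this rescaling, matched against $-K_{X_\Sigma}-D_{\Sigma,\m}=\sum_i\f{1}{m_i}[D_i]$, that makes the polar hyperplanes all pass through the point $s=1$ on the log-anticanonical ray — the toric incarnation of the PSTVA exponent $\rank\Pic$.

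Finally, arguing as in \cite{BT2}, the contribution of the trivial character dominates, while the remaining characters, integrated over the group of continuous automorphic characters — along which the polar hyperplanes are in general position — contribute strictly smaller order; one concludes that $Z_{\mathbf{g},*}(s)$ continues past $\Re(s)=1$ with a pole at $s=1$ of order $\rank\Pic(X_\Sigma)$, uniformly enough to apply a standard Tauberian theorem. This yields $N_{\mathbf{g}}(\Sigma,\calX,\m,*;B)\sim c\,B(\log B)^{\rank\Pic(X_\Sigma)-1}$ with $c>0$ assembled from the archimedean and non-archimedean local densities of the geometric semi-integral points and from rational-cohomological constants; it then remains to rewrite $c$ in the form predicted in Conjecture~\ref{conj:CLTBT}, i.e.\ to identify the correct Tamagawa-type measure together with the cohomological (Brauer-type) correction. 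I expect the main obstacles to be the local computation and meromorphic continuation of the third step together with the uniform control of the character sum in the last step — exactly the points at which \cite{BT2} is most delicate — and the final identification of the leading constant with the conjectural one.
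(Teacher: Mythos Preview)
Your overall strategy coincides with the paper's: height zeta function on $T(K)$, Poisson summation on $T(\AA_K)$, local Fourier transforms organised by the cones of $\Sigma$, regularisation by Hecke $L$-functions, then a Tauberian theorem. There is, however, a genuine gap in your third and fourth steps that would produce the wrong leading constant.

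The local computation does not yield $L$-factors with merely a rescaled \emph{argument}. Summing, say, the Darmon geometric series over $a_\rho\in m_\rho\ZZ_{>0}$ gives
\[
\sum_{b\geq 1}\chi_v(e_\rho)^{m_\rho b}\,q_v^{-m_\rho b s_\rho}
=\frac{\chi_v(e_\rho)^{m_\rho}\,q_v^{-m_\rho s_\rho}}{1-\chi_v(e_\rho)^{m_\rho}\,q_v^{-m_\rho s_\rho}},
\]
so the \emph{character} is raised to the $m_i$-th power as well; the global regulariser is $\prod_{i=1}^{r}L(\chi_i^{m_i},s_i)$ in the paper's normalisation (Proposition~\ref{prop:geomreg}, Corollary~\ref{cor:campreg}), and the Campana case has the same leading behaviour (Proposition~\ref{prop:fanfuncgeom}). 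Consequently your claim that ``the contribution of the trivial character dominates'' is false in general: every automorphic $\chi$ with $\chi_i^{m_i}$ principal for all $i$ yields a pole of the same order at $s=1$. The paper shows these form a finite group and identifies it, via Lemma~\ref{lem:br}, with $\Br_e(X_\Sigma,D_{\Sigma,\m})/\B(T)$. Summing the Fourier transforms over this group and applying character orthogonality collapses the expression to a Tamagawa-type integral over the Brauer set $T(\AA_K)^{\Br_1(X_\Sigma,D_{\Sigma,\m})^{\mathbf{K}^*_{\m,\geom}}}$, which is exactly the mechanism producing the constant of Conjecture~\ref{conj:CLTBT}. Keeping only $\chi=1$ would give a different constant --- this is precisely the discrepancy between the original PSTVA constant and the modified one. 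In other words, the ``Brauer-type correction'' you list as a final bookkeeping step is not separate: it is generated by, and must be extracted from, the additional contributing characters that your sketch discards.
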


Since geometric semi-integral points coincide with their non-geometric counterparts when the irreducible components of the orbifold divisor are smooth (Corollary~\ref{cor:red}), we obtain the following result, which amounts to (but is stronger than) a verification of the modified PSTVA conjecture for smooth toric orbifolds.

\begin{theorem} \label{thm:camp}
Let $\left(X_{\Sigma},D_{\Sigma,\mathbf{m}}\right)$ be as in Theorem~\ref{thm:geom}. If the divisors $D_i$ are smooth, then the modified PSTVA conjecture holds for the log-anticanonical height. In particular, the modified PSTVA conjecture holds for smooth toric orbifolds. 
\end{theorem}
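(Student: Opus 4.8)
The plan is to obtain Theorem~\ref{thm:camp} as an essentially formal consequence of Theorem~\ref{thm:geom} together with the comparison of geometric and ordinary semi-integral points. First I would fix $\m \in \ZZ_{\geq 1}^r$ with every $D_i$ smooth, a finite set $S \subset \Omega_K$, and an $\calO_S$-model $\calX$ of $X_\Sigma$ not lying on $D_\Sigma$. By Corollary~\ref{cor:red}, smoothness of the $D_i$ forces the geometric and the ordinary notions of $\calO_S$-Campana (resp.\ Darmon) point on $\left(X_\Sigma, D_{\Sigma,\m}\right)$ relative to $\calX$ to coincide, so the counting function $N_{\mathbf{g}}\left(\Sigma,\calX,\m,*;B\right)$ of Theorem~\ref{thm:geom} is literally the counting function of Campana (resp.\ Darmon) points that enters the PSTVA conjecture (Conjecture~\ref{conj:pstva}). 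Theorem~\ref{thm:geom} then yields the asymptotic $N\left(\Sigma,\calX,\m,*;B\right) \sim c_{\mathbf{g}}\left(\Sigma,\calX,\m,*\right) B (\log B)^{\rank\Pic(X_\Sigma)-1}$ with $c_{\mathbf{g}} \in \RR_{>0}$.

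The remaining task is to match the right-hand side with the modified PSTVA prediction. For the exponent, I would recall that the PSTVA conjecture applied to the log-anticanonical height predicts a power of $\log B$ equal to $\rank\Pic(X_\Sigma) - 1$; since $X_\Sigma$ is a smooth projective toric variety the pseudo-effective cone is full-dimensional and $\Pic(X_\Sigma)$ is the relevant finitely generated group, so this invariant is unambiguous and agrees with the exponent produced above. For the leading constant no computation is needed in principle: the modified PSTVA conjecture is by definition the PSTVA conjecture with leading constant replaced by that of Chow, Loughran, Takloo-Bighash and Tanimoto, i.e.\ the constant of Conjecture~\ref{conj:CLTBT}, which is exactly $c_{\mathbf{g}}\left(\Sigma,\calX,\m,*\right)$; thus the two constants agree term by term, including the convergence factors, the Tamagawa-type adelic measure, and the Brauer--Manin correction term. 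Since the asymptotic holds for the full set of Campana (resp.\ Darmon) points with no subset removed, we in fact obtain a statement strictly stronger than the modified PSTVA conjecture, which permits the removal of a thin set; in particular this reconfirms that these semi-integral points are non-thin, cf.\ \cite{NS}. For the ``in particular'' clause I would note that a smooth toric orbifold is precisely a pair $\left(X_\Sigma, D_{\Sigma,\m}\right)$ as in Theorem~\ref{thm:geom} whose boundary components $D_i$ are smooth (equivalently, no two Galois-conjugate rays of $\Sigma$ lie in a common cone, so that the geometric components of each $D_i$ are pairwise disjoint), so the first assertion applies verbatim.

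I do not expect a serious obstacle: the whole analytic content sits in Theorem~\ref{thm:geom}, and the reduction above is bookkeeping. The only points that need genuine care are confirming that the constant of Conjecture~\ref{conj:CLTBT} is normalised so as to coincide with the modified PSTVA constant in the toric case (in particular that the archimedean and $S$-adic local densities, and the effective-cone volume, are set up compatibly on both sides), and checking that the thin-set leeway built into the modified PSTVA conjecture does not perturb the leading term --- both of which follow here because the count matches the full conjectural main term, so the relevant Campana and Darmon points cannot be thin.
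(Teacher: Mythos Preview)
Your approach differs from the paper's. The paper does not deduce Theorem~\ref{thm:camp} from Theorem~\ref{thm:geom} as a black box; rather, it reruns the entire height zeta function argument with the geometric indicator $\delta^*_{\m,\geom}$ replaced throughout by the ordinary indicator $\delta^*_{\m}$, invoking the non-geometric analogue of the regularisation in place of Proposition~\ref{prop:geomreg}. This directly produces the asymptotic for ordinary Campana (resp.\ Darmon) points with leading constant built from the ordinary local data $(\calX,\calD)^*(\O_v)$ and $\mathbf{K}^*_{\m}$, which is then by construction the constant of Conjecture~\ref{conj:CLTBT}.

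Your deductive route is in the spirit of the Note following Conjecture~\ref{conj:pstva} and is morally correct, but there is a point you elide. Corollary~\ref{cor:red} only asserts equality of the local geometric and ordinary semi-integral sets for \emph{all but finitely many} $v \not\in S$, not for every such $v$. Hence for a fixed $\calO_S$-model the two global counting functions are not literally equal, and the constant $c_{\mathbf{g}}$ furnished by Theorem~\ref{thm:geom} --- built from the geometric local sets and the subgroup $\mathbf{K}^*_{\m,\geom}$ --- is not a priori the constant of Conjecture~\ref{conj:CLTBT} for ordinary points, which is built from $(\calX,\calD)^*(\O_v)$ and $\mathbf{K}^*_{\m}$. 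To make your argument go through you must show that the finitely many exceptional local factors alter the count and the predicted constant by the same multiplicative factor; this is true and not hard, but it is not vacuous, and it is precisely what the paper's rerun strategy sidesteps, since there the bad places are simply absorbed into the holomorphic regularising function $G^*_{\m}(\chi,\mathbf{s})$.
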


As a consequence of Theorem~\ref{thm:camp}, we deduce that the modified PSTVA conjecture holds whenever $X_{\Sigma}$ is a split toric variety.

\begin{corollary} \label{cor:split}
The modified PSTVA conjecture holds for the orbifold $(X_{\Sigma},D_{\Sigma,\m})$ of Theorem~\ref{thm:geom} with log-anticanonical height whenever $X_{\Sigma}$ is a split toric variety.
\end{corollary}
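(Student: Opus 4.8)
The plan is to deduce the statement directly from Theorem~\ref{thm:camp}: since that result establishes the modified PSTVA conjecture for the log-anticanonical height as soon as the $K$-irreducible components $D_i$ of $D_\Sigma$ are smooth, it suffices to check that this smoothness holds automatically when $X_\Sigma$ is a split toric variety.

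First I would unwind the hypothesis. To say that $X_\Sigma$ is a \emph{split} toric variety is to say that the torus $T$ is split, i.e.\ $E = K$ and $G = \Gal(E/K)$ is trivial. The $G$-action on $\Sigma \subset X_*\left(\overline{T}\right)_{\mathbb{R}} = X_*(T)_{\mathbb{R}}$ is then trivial, so the decomposition $D_\Sigma = \bigcup_{i=1}^r D_i$ into $K$-irreducible components agrees with the decomposition into geometrically irreducible components; by the orbit--cone correspondence these are exactly the torus-invariant prime divisors $D_\rho$ indexed by the rays $\rho \in \Sigma(1)$, so that $r = \#\Sigma(1)$.

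It then remains to recall a standard fact from toric geometry: since $\Sigma$ is a complete regular fan, $X_\Sigma$ is smooth and projective with $D_\Sigma$ a strict normal crossings divisor, and for each ray $\rho$ the component $D_\rho$ is equivariantly isomorphic to the (complete, smooth) toric variety associated to the star of $\rho$ in $\Sigma$. In particular each $D_\rho$ is smooth, so the hypothesis of Theorem~\ref{thm:camp} is met and the conclusion follows at once. There is essentially no obstacle here --- all of the arithmetic substance is contained in Theorems~\ref{thm:geom} and~\ref{thm:camp} --- the only slightly non-formal point being the identification of $D_\rho$ with a smooth toric variety (equivalently, that the star of a ray in a complete regular fan is again a complete regular fan), which is classical.
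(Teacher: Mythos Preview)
Your argument is correct and essentially the same as the paper's: you reduce to Theorem~\ref{thm:camp} by observing that in the split case the $D_i$ are the individual torus-invariant prime divisors $D_\rho$, which are smooth because $\Sigma$ is regular. The paper's proof is slightly more terse, invoking Lemma~\ref{lem:snc} (the strict normal crossings property of $D_\Sigma$ on a smooth toric variety) to conclude that the orbifold is smooth and then applying the ``in particular'' clause of Theorem~\ref{thm:camp}, but this amounts to the same toric fact you use.
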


Corollary~\ref{cor:split} should be compared with the main result of \cite[Thm.~1.2]{PS}, which deals with the case $K = \QQ$ and goes via the hyperbola method.

\subsection*{Plan}
In Section~\ref{sec:camp}, we define semi-integral points and state the PSTVA conjecture and its modification. In Section~\ref{sec:tor}, we give background on toric varieties. In Section~\ref{sec:har} we introduce our heights and $L$-functions. In Section~\ref{sec:fan} we introduce functions defined via the fan $\Sigma$ for the regularisation of Fourier transforms, which is the heart of the height zeta function approach. In Section~\ref{sec:geomproof} we prove our main results.


\subsection*{Conventions}
\subsubsection*{Algebra}
We denote by $R^*$ the units of a ring $R$ and by $1_G$ the identity of a group $G$. We denote by $G^\wedge = \Hom(G,S^1)$ the group of continuous characters of a topological group $G$, and by $G^\sim = \Hom(G,\mathbb{Q}/\mathbb{Z})$ the continuous $\mathbb{Q}/\mathbb{Z}$-dual, choosing an embedding $\mathbb{Q}/\mathbb{Z} \hookrightarrow S^1$ so that we may identify $G^\sim$ as a subset of $G^\wedge$. Given a perfect field $F$, we denote by $\overline{F}$ an algebraic closure of $F$ and set $G_F = \Gal\left(\overline{F}/F\right)$.
\subsubsection*{Geometry}
We write $\Spec R$ for the spectrum of a ring $R$ with the Zariski topology. An $R$-scheme is a scheme $X$ together with a morphism $X \rightarrow \Spec R$. The set of $R$-points $X\left(R\right)$ of $X$ is the set of sections of the structure morphism $X \rightarrow \Spec R$. If $X = \Proj T$ for some ring $T$ and $f \in T$, we denote by $Z\left(f\right) \subset X$ the closed subscheme $\Proj T/\left(f\right)$. Given a morphism $\Spec S \rightarrow \Spec R$, we denote by $X_S$ the fibre product $X \times_{\Spec R} \Spec S$. When $R = k$ is a field and $S = \Spec\overline{k}$, we write $\overline{X}$ for $X_{\overline{k}}$. Given $n \in \Z_{\geq 1}$, we denote by $\A^n_R$ and $\P^n_R$ the affine and projective $n$-spaces over $R$ respectively, omitting the ground ring $R$ if clear from context. A variety over a field $F$ is a geometrically integral separated scheme of finite type over $F$.
\subsubsection*{Number theory}
Given a number field $K$, we denote by $\Omega_K$ the set of places of $K$. We denote by $\Omega_K^\infty$ the archimedean places of $K$ and set $\Omega_K^f = \Omega_K \setminus \Omega_K^\infty$. For $v \in \Omega_K$, we denote by $K_v$ the completion of $K$ at $v$; if $v \not\in \Omega_K^\infty$, then $\O_v$ denotes the ring of $v$-adic integers in $K_v$, and $\pi_v$ and $\F_v$ denote a uniformiser for $\O_v$ and its residue field respectively. We set $q_v = \#\F_v$. We choose the absolute value $|\cdot|_v$ on $K_v^*$ such that $|x|_v = |N_{K_v/\mathbb{Q}_p}(x)|_p$ for $|\cdot|_p$ the usual absolute value on $\mathbb{Q}_p$. Given a finite subset $S \subset \Omega_K$ containing $\Omega_K^\infty$, we denote by $\O_S$ the ring of $S$-integers of $K$. When $S = \Omega_{K}^\infty$, write $\O_K$ for $\O_S$. We denote by $v_p$ the $p$-adic valuation for a rational prime $p \in \QQ$. Given an extension of number fields $L/K$ and $v \in \Omega_K$, we write $w \mid v$ when $w \in \Omega_L$ extends $v$. If $L/K$ is Galois with Galois group $G$, we denote by $G_v = \{g \in G: gv = v\}$ the decomposition group at $v$.
\subsubsection*{Arithmetic geometry}
Let $X$ be a variety over $K$. Let $v \in \Omega_K^f$, and let $S \subset \Omega_K$ be a finite set containing $\Omega_K^\infty$. Let $R \in \{\O_v,\O_S\}$. An \emph{$R$-model} of $X$ is a flat $R$-scheme $\calX$ of finite type together with an isomorphism between $X$ and the generic fibre of $\calX$. Suppose given an $\O_S$-scheme $\calY$, a place $v \not\in S$ and a finite set $S' \subset \Omega_K$ containing $S$. Then we denote by $\calY_{S'}$ and $\calY_v$ the base changes $\calY_{\O_{S'}}$ and $\calY_{\F_v}$ respectively. Given a $K$-variety $Z$, we will consider $Z(K)$ as a subset of $Z\left(\AA_K\right)$ via the diagonal embedding.

\subsection*{Acknowledgements}

We thank Ratko Darda, Daniel Loughran, Boaz Moerman, Marta Pieropan, Ross Paterson, Tim Santens, Damaris Schindler and Sho Tanimoto for helpful comments and suggestions. Both authors were supported by the University of Bristol and the Heilbronn Institute for Mathematical Research (HIMR). Progress was made during the workshop ``Campana Points on Toric Varieties'', held at the University of Bristol in February 2024 and funded by HIMR.

\section{Semi-integral points} \label{sec:camp}
\begin{mydef}
A \emph{Campana orbifold} over a field $F$ is a pair $\left(X,D\right)$ consisting of a proper, normal $F$-variety $X$ and an effective Cartier $\mathbb{Q}$-divisor
\[
D = \sum_{\alpha \in \mathscr{A}} \left(1-\frac{1}{m_\alpha}\right) D_\alpha
\]
on $X$ with the $D_\alpha$, $\alpha \in \calA$ irreducible and the weights $m_\alpha \in \mathbb{Z}_{\geq 1} \cup \{\infty\}$ such that only finitely many $m_\alpha \neq 1$ (by convention, we take $\frac{1}{\infty} = 0$).
The \emph{support} of the $\mathbb{Q}$-divisor $D$ is
\[
D_{\textrm{red}} = \bigcup_{m_\alpha \neq 1} D_{\alpha}.
\]
We say that $\left(X,D\right)$ is \emph{smooth} if $X$ is smooth and $D_{\textrm{red}}$ has strict normal crossings (see \cite[Def.~41.21.1,~Tag~0BI9]{SP} for the definition of strict normal crossings divisors).
\end{mydef}

\begin{example} \label{ex:orb}
To illustrate concepts, we introduce the running example $(X,D) = (\mathbb{P}^2_\mathbb{Q}, \sum_{i=0}^2(1-\frac{1}{m_i})D_i)$, where $\mathbb{P}^2$ has coordinates $x_0,x_1,x_2$ and $D_i$ is the divisor $x_i = 0$.
\end{example}

Let $\left(X,D\right)$ be a Campana orbifold over a number field $K$ and $S \subset \Omega_K$ be a finite set containing $\Omega_K^{\infty}$.

\begin{mydef}
A \emph{model} of $\left(X,D\right)$ over $\mathcal{O}_{S}$ is a pair $\left(\mathcal{X},\mathcal{D}\right)$, where $\mathcal{X}$ is a flat proper model of $X$ over $\mathcal{O}_{S}$ (i.e.\ a flat proper $\mathcal{O}_{S}$-scheme with a choice of isomorphism $\mathcal{X}_{\left(0\right)} \cong X$) and $\mathcal{D} = \sum_{\alpha \in \mathscr{A}}\l(1-\f{1}{m_{\alpha}}\r) \mathcal{D}_{\alpha}$ for $\mathcal{D}_{\alpha}$ the Zariski closure of $D_{\alpha}$ in $\mathcal{X}$.
\end{mydef}

\begin{example} \label{ex:mod}
For our running example $(\mathbb{P}^2_\mathbb{Q}, \sum_{i=0}^2(1-\frac{1}{m_i})D_i)$, we have the regular $\mathbb{Z}$-model $(\mathbb{P}^2_\mathbb{Z}, \sum_{i=0}^2(1-\frac{1}{m_i})\calD_i)$, where $\calD_i = \Proj \mathbb{Z}[x_0,x_1,x_2]/(x_i)$.
\end{example}

Now let $v \not\in S$. We write $\left(\mathcal{D}_{\red}\right)_{\O_v} = \cup_{\beta_v}\calD_{\beta_v}$ for the $\O_v$-decomposition of $\mathcal{D}_{\red} = \bigcup_{m_\alpha \neq 1} \calD_\alpha$ and $\beta_v \mid \alpha$ when $\calD_{\beta_v}$ is an $\O_v$-component of $\calD_{\alpha}$.

Let $P \in X\left(K_v\right)$, and write $\mathcal{P}_{\mathcal{O}_v} \in \mathcal{X}\left(\mathcal{O}_v\right)$ for its extension to an $\mathcal{O}_v$-point, which exists due to properness of $\mathcal{X}$. 

\begin{mydef} \label{def:intmult}
The \emph{($v$-adic) local intersection multiplicity} $n_v\left(\mathcal{Z},P\right)$ of $P \in X(K_v)$ and a closed subscheme $\mathcal{Z} \subset \mathcal{X}_{\mathcal{O}_v}$ is $\infty$ if $P \in Z = \mathcal{Z} \times_{\O_v} K_v$ and otherwise $n \in \ZZ_{\geq 0}$ such that $\mathcal{P}_{\mathcal{O}_v} \cap \mathcal{Z} \cong \Spec\left(\mathcal{O}_v/\left(\pi_v^n\right)\right)$ for $\mathcal{P}_{\mathcal{O}_v} \cap \mathcal{Z}$ the fibre product
\[
\begin{tikzcd}
\mathcal{P}_{\mathcal{O}_v} \cap \mathcal{Z} \arrow[r] \arrow[d] & \mathcal{Z} \arrow[d] \\
\Spec \O_v \arrow[r] & \mathcal{X}_{\O_v}.
\end{tikzcd}
\]
\end{mydef}

Note that this definition of intersection multiplicity coincides with the usual intersection pairing on arithmetic schemes (see e.g.\ \cite[Proof~of~Prop.~1.4.7]{VOJ}).

\begin{example} \label{ex:intmult}
In our running example, let us calculate $n_p(\calD_0,P)$ for $P = [a:b:c]$. Choosing coprime integers $a$, $b$ and $c$, we have $\calP = \Proj\mathbb{Z}[x_0,x_1,x_2]/(ax_1 - bx_0, bx_2 - cx_1, cx_0 - ax_2)$, so $\calP_{\mathbb{Z}_p} \cap (\calD_0)_{\mathbb{Z}_p} = \Proj \mathbb{Z}_p[x_1,x_2]/(ax_1,bx_2 - cx_1,-ax_2) \cong \Spec\mathbb{Z}_p/(p^{v_p(a)})$. Indeed, if $a \in \mathbb{Z}_p^*$ then the result is clear, and otherwise we have $\Proj \mathbb{Z}_p[x_1,x_2]/(ax_1,bx_2 - cx_1,-ax_2) \cong \Proj \mathbb{Z}_p[x]/(p^{v_p(a)}x)$. Then $n_p(\calD_0,P) = v_p(a)$. Similarly, $n_p(\calD_1,P) = v_p(b)$ and $n_p(\calD_2,P) = v_p(c)$.
\end{example}

Generalising the previous example, one may show that, for $Z(f)$ a hyperplane section of a projective variety $X \subset \mathbb{P}^n_K$ and $P = [x_0:x_1:\cdots:x_n] \in \mathbb{P}^n(K_v)$ with the $x_i$ a set of coprime $\mathcal{O}_v$-coordinates, we have $n_v(\calZ,P) = v(f(\mathbf{x}))$ in the standard model of $X$ coming from taking the closure of $X$ in $\mathbb{P}^n_{\O_K}$, where $\calZ$ is the closure of $Z(f)$ in this model. 

We make the following observation, which can also be found in \cite[\S2.5]{AVA} and which tells us how intersection multiplicity changes upon base change.

\begin{lemma} \label{lem:stab}
In the setting of Definition \ref{def:intmult}, let $L/K$ be a finite extension and let $w \in \Omega_L$ with $w \mid v$. Then, for $e(w/v)$ the ramification index of $L_w/K_v$,
\[
n_w(\calZ,P) = e(w/v)n_v(\calZ,P).
\]
\end{lemma}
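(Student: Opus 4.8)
The plan is to reduce to a local statement about the behaviour of the length of an intersection scheme under base change of discrete valuation rings. Recall from Definition~\ref{def:intmult} that $n_v(\calZ,P)$ is characterised by $\calP_{\O_v} \cap \calZ \cong \Spec(\O_v/(\pi_v^{n_v(\calZ,P)}))$, so it is the length of this Artinian $\O_v$-algebra, i.e.\ $n_v(\calZ,P) = \length_{\O_v}(\O_v/I)$ where $I \subseteq \O_v$ is the ideal cutting out $\calP_{\O_v} \cap \calZ$ (when $P \notin Z$; the case $P \in Z$ is immediate since $P \in Z$ over $K_v$ iff the corresponding point lies on $Z_{L_w}$ over $L_w$, so both sides are $\infty$). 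First I would observe that the $\O_w$-point $\calP_{\O_w}$ extending $P$ viewed in $X(L_w)$ is simply the base change of $\calP_{\O_v}$ along $\Spec\O_w \to \Spec\O_v$, by uniqueness of the extension of a point to an $\O_w$-point of a proper scheme (equivalently, by the valuative criterion). Likewise $\calZ_{\O_w} := \calZ \times_{\O_v}\O_w$ is the closure one would take for the model over $\O_w$, and formation of the fibre product $\calP_{\O_v}\cap\calZ$ commutes with the flat base change $\Spec\O_w\to\Spec\O_v$. Hence the ideal cutting out $\calP_{\O_w}\cap\calZ_{\O_w}$ in $\O_w$ is $I\O_w$.

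It then remains to compute $\length_{\O_w}(\O_w/I\O_w)$ in terms of $\length_{\O_v}(\O_v/I)$. Since $\O_v$ is a DVR and $I = (\pi_v^n)$ with $n = n_v(\calZ,P)$, we have $I\O_w = (\pi_v^n)\O_w$, and $v_w(\pi_v) = e(w/v)$ by definition of the ramification index. Therefore $I\O_w = (\pi_w^{e(w/v)n})$ as ideals of $\O_w$, which gives $\calP_{\O_w}\cap\calZ_{\O_w} \cong \Spec(\O_w/(\pi_w^{e(w/v)n}))$ and hence $n_w(\calZ,P) = e(w/v)\,n_v(\calZ,P)$ directly from the defining property in Definition~\ref{def:intmult}.

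I do not anticipate a serious obstacle; the only point requiring a little care is the compatibility of the extension of $P$ to an integral point with base change, i.e.\ checking that the $\O_w$-point used to define $n_w(\calZ,P)$ is indeed $\calP_{\O_v}\times_{\O_v}\O_w$ rather than some a priori different section. This follows from the valuative criterion of properness applied to $\calX_{\O_w}\to\Spec\O_w$ together with the uniqueness clause, since the composite $\Spec\O_w \to \Spec\O_v \xrightarrow{\calP_{\O_v}} \calX$ is a section over the generic point agreeing with $P_{L_w}$. Once this is in hand, the length computation is the elementary DVR fact above, and flatness of $\O_w$ over $\O_v$ (so that $-\otimes_{\O_v}\O_w$ preserves the relevant fibre square) completes the argument.
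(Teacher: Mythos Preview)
Your proposal is correct and follows essentially the same route as the paper: base change the intersection scheme $\calP_{\O_v}\cap\calZ$ along $\Spec\O_w\to\Spec\O_v$ to obtain $\Spec(\O_w/(\pi_v^n))$, then invoke $\pi_v = u\pi_w^{e(w/v)}$ with $u\in\O_w^*$ to conclude. The paper compresses the base-change step into the two identities $\Spec A\times_{\Spec C}\Spec B\cong\Spec(A\otimes_C B)$ and $A/I\otimes_A B\cong B/IB$, while you spell out the uniqueness of $\calP_{\O_w}$ via the valuative criterion, which the paper leaves implicit. One minor remark: you do not actually need flatness of $\O_w$ over $\O_v$ for the fibre-square compatibility, since commutation of fibre products with base change is just associativity; but this is harmless over-justification, not a gap.
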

\begin{proof}
We have the following two identities:
\begin{enumerate}
\item $\Spec A \times_{\Spec C} \Spec B \cong \Spec(A \otimes_{C}B)$ for $C$-rings $A$ and $B$.
\item $A/I \otimes_A B \cong B/IB$ for an $A$-ring $B$ and $I$ an ideal of $A$.
\end{enumerate}
From these, we deduce that
$\calP_{\O_w} \cap \calZ \cong \Spec(\O_w/(\pi_v^n))$. Then the equality follows from the identity $\pi_v = u \pi_w^{e(w/v)}$ for some $u \in \calO_w^*$ \cite[Def.~15.111.1,~Tag~0EXQ]{SP}.
\end{proof}

We will later use the following result to compute intersection multiplicities in terms of local equations for relative effective Cartier divisors (see \cite[Def.~31.18.2,~Tag~062T]{SP}).

\begin{lemma} \label{lem:affint}
Suppose that $\calZ \subset \calX_{\O_v}$ is a relative effective Cartier divisor and that the reduction $\calP_{\F_v}$ of $\calP_{\O_v}$ lies on $\calZ_{\F_v}$. Let $\Spec A$ be an affine open neighbourhood of $\calP_{\F_v}$ in $\calX$, and let $f \in A$ be a local equation for $\calZ$ at $\calP_{\F_v}$. Let $\varphi_P: A \rightarrow \O_v$ be the ring morphism corresponding to $\calP$. Then $n_v(\calZ,P) = v(\varphi_P(f))$.
\end{lemma}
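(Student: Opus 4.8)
The plan is to unwind the definitions on both sides and check they agree. By Definition~\ref{def:intmult}, $n_v(\calZ,P)$ is the unique $n \in \ZZ_{\geq 0}$ with $\calP_{\O_v} \cap \calZ \cong \Spec(\O_v/(\pi_v^n))$ (the case $P \in Z$ is excluded since we assume $\calP_{\F_v}$ lies on $\calZ_{\F_v}$ but... actually one should note that if $\varphi_P(f) = 0$ then $P \in Z$ and both sides are $\infty$, so we may assume $\varphi_P(f) \neq 0$). The fibre product $\calP_{\O_v} \cap \calZ$ is computed locally: since $\calP_{\F_v}$ lies in the affine open $\Spec A \subset \calX$, the point $\calP_{\O_v} \colon \Spec \O_v \to \calX$ factors through $\Spec A$ (its image is the closure of the generic point, which specialises to $\calP_{\F_v} \in \Spec A$, and $\Spec A$ is open), corresponding to the ring map $\varphi_P \colon A \to \O_v$. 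Likewise $\calZ \cap \Spec A = \Spec(A/(f))$ since $f$ is a local equation for the Cartier divisor $\calZ$ at $\calP_{\F_v}$, shrinking $\Spec A$ if necessary so that $f$ cuts out $\calZ$ on all of $\Spec A$.

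With these local descriptions in hand, the fibre product becomes
\[
\calP_{\O_v} \cap \calZ \cong \Spec\bigl(\O_v \otimes_A A/(f)\bigr) \cong \Spec\bigl(\O_v/(\varphi_P(f))\bigr),
\]
using the two elementary identities recorded in the proof of Lemma~\ref{lem:stab} (base change of $\Spec$ is $\Spec$ of the tensor product, and $A/I \otimes_A B \cong B/IB$). Now $\varphi_P(f) \in \O_v$ is nonzero, so $(\varphi_P(f)) = (\pi_v^{v(\varphi_P(f))})$ as ideals of the DVR $\O_v$, whence $\calP_{\O_v} \cap \calZ \cong \Spec(\O_v/(\pi_v^{v(\varphi_P(f))}))$. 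Comparing with the defining property of $n_v(\calZ,P)$ and using uniqueness of $n$, we conclude $n_v(\calZ,P) = v(\varphi_P(f))$.

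The only genuinely delicate point is the reduction to the affine chart: one must check that $\calP_{\O_v}$ really does factor through $\Spec A$, which relies on the hypothesis that the reduction $\calP_{\F_v}$ lands in $\Spec A$ together with the fact that $\calP_{\O_v}(\O_v)$, as a morphism from a local scheme, has image $\{\eta, \calP_{\F_v}\}$ with $\eta$ the generic point specialising to $\calP_{\F_v}$; since $\Spec A$ is open and contains $\calP_{\F_v}$, it contains this image. One should also be slightly careful that ``local equation for $\calZ$ at $\calP_{\F_v}$'' can be upgraded, after shrinking $\Spec A$, to a global equation for $\calZ|_{\Spec A}$; this is exactly the definition of a Cartier divisor being principal on a suitable open, and $\calZ$ being a relative effective Cartier divisor guarantees it is locally principal with a nonzerodivisor equation. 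Everything else is the same bookkeeping with tensor products as in Lemma~\ref{lem:stab}.
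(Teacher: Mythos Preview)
Your proof is correct and follows essentially the same route as the paper's: reduce to the affine chart and compute the fibre product via the tensor-product identity $A/(f) \otimes_A \O_v \cong \O_v/(\varphi_P(f))$. The paper compresses this into a single line, while you spell out the factorisation of $\calP_{\O_v}$ through $\Spec A$ and the shrinking of the chart so that $f$ cuts out $\calZ$ globally on it; these are exactly the details hidden in the paper's phrase ``upon reducing to affines''.
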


\begin{proof}
Upon reducing to affines, this follows readily from the compatibility between fibre products of affine schemes and tensor products of rings: indeed,
\[
A/(f) \otimes_A \calO_v \cong \calO_v/\left(\pi_v^{v\left(\varphi_P(f)\right)}\right). \qedhere
\]
\end{proof}

\begin{mydef}
Let $P \in X\left(K_v\right)$ be a point satisfying $n_v\left(\mathcal{D}_\alpha,P\right) = 0$ for all $m_\alpha = \infty$. We say that $P$ is a \emph{$v$-adic/local:}

\begin{enumerate}
\item \emph{weak Campana point} if $\sum_{m_\alpha \neq 1,\infty }\frac{1}{m_\alpha}n_v\left(\calD_\alpha,P\right) \not\in \left(0,1\right)$;
\item \emph{Campana point} if $n_v\left(\calD_\alpha,P\right) \in \mathbb{Z}_{\geq m_\alpha} \cup \{0,\infty\}$ for all $m_\alpha \neq \infty$;
\item \emph{strong Campana point} if $n_v\left(\calD_{\beta_v},P\right) \in \mathbb{Z}_{\geq m_\alpha} \cup \{0,\infty\}$ for all $\beta_v \mid \alpha$, $m_\alpha \neq \infty$.
\item \emph{Darmon point} if $m_\alpha \mid n_v\left(\calD_{\alpha},P\right)$ for all $m_\alpha \neq \infty$;
\item \emph{strong Darmon point} if $m_\alpha \mid n_v\left(\calD_{\beta_v},P\right)$ for all $\beta_v \mid \alpha$, $m_\alpha \neq \infty$.
\end{enumerate}
We denote the sets of $v$-adic Campana points and $v$-adic Darmon points of 
$\left(\mathcal{X},\mathcal{D}\right)$
by $\left(\mathcal{X},\mathcal{D}\right)^{\cc}\left(\O_v\right)$
and $\left(\mathcal{X},\mathcal{D}\right)^{\dd}\left(\O_v\right)$
respectively.
We use the subscripts $\mathbf{w}$ and $\mathbf{s}$
to specify the weak and strong versions respectively, so that, for example, the weak $v$-adic Campana points are denoted by $\left(\mathcal{X},\mathcal{D}\right)^{\cc}_{\weak}\left(\O_v\right)$.
\end{mydef}

\begin{mydef}
We say that $P \in X(K)$ is an \emph{$\O_S$-Campana point} (or simply \emph{Campana point}) of $\left(\sX,\sD\right)$ if it is a $v$-adic Campana point for all $v \not\in S$. We make an analogous definition for the global counterpart of each of the notions of semi-integral point in the previous definition, replacing $\O_v$ by $\O_S$ in the notation.
\end{mydef}

\begin{note}
In \cite[\S7.6]{C15}, Campana points are referred to as (orbifold) integral points, while Darmon points are referred to as classical (orbifold) integral points.
\end{note}

Strong Campana points were introduced in \cite{STR} as a geometrically natural variant of Campana points which behaved well with the mildly singular orbifolds studied there. Similarly motivated by log geometry and arithmetic, we presently define a new variant of semi-integral points, which we name \emph{geometric semi-integral points}.

Write $\left(D_{\red}\right)_{\overline{K}} = \bigcup_{\gamma}D_{\gamma}$ for the decomposition of $D_{\red}$ over $\overline{K}$, and write $\gamma \mid \alpha$ to signify that $D_{\gamma}$ is a component of $D_\alpha$. Let $K_\gamma$ be the minimal field of definition of $(D_{\gamma})_{\overline{K}}$ as an irreducible component of $(D_{\red})_{\overline{K}}$ relative to the ground field $K$, the existence of which follows from \cite[Cor.~4.9.5]{EGA}. By \emph{ibid.}, the extension $K_\gamma/K$ is finite.

\begin{mydef} \label{def:geompt}
Let $P \in X\left(K_v\right)$ be a point satisfying $n_v\left(\mathcal{D}_\alpha,P\right) = 0$ for all $m_\alpha = \infty$. We say that $P$ is a \emph{$v$-adic/local:}
\begin{enumerate}
\item \emph{geometric Campana point} if $n_w(\calD_{\gamma},P) \in \mathbb{Z}_{\geq m_\alpha} \cup \{0,\infty\}$ for all $\gamma \mid \alpha$, $w \mid v \in \Omega_{K_{\gamma}}$, $m_\alpha \neq \infty$;
\item \emph{geometric Darmon point} if $m_\alpha \mid n_w(\calD_{\gamma},P)$ for all $\gamma \mid \alpha$, $w \mid v \in \Omega_{K_{\gamma}}$, $m_\alpha \neq \infty$.
\end{enumerate}
We denote by $\left(\calX,\calD\right)^{\cc}_{\mathbf{g}}(\O_v)$ and $\left(\calX,\calD\right)^{\dd}_{\mathbf{g}}(\O_v)$ the sets of local geometric Campana and Darmon points respectively and replace $\O_v$ by $\O_S$ for their global analogues.
\end{mydef}

\begin{note}
By Lemma \ref{lem:stab}, we have the following stability result: let $E/K$ be an extension over which all of the $D_{\gamma}$ are defined. By minimality of $K_\gamma$, we have $K_\gamma \subset E$. We obtain equivalent definitions for geometric Campana and Darmon points by replacing the $w$-adic multiplicities in Definition \ref{def:geompt} by the $W$-adic multiplicities for $W \mid v$ in $E$, provided that $E_W/K_{\gamma,w}$ are unramified extensions.
\end{note}

\subsection{Smoothness and semi-integral points}

In this section we note the following consequence of smoothness of the orbifold divisor on semi-integral points.

\begin{lemma}
Let $Z$ be a smooth divisor on a variety $X$ over a global field $K$. Let $\calX$ be an $\calO_S$-model of $X$ for some finite $S \subset \Omega_K$. Set $\calZ = \cl_{\calX}Z$ and let $Z_1,\dots,Z_n$ be the irreducible component of $Z_{\overline{K}}$ with $L_i/K$ the field of definition of $Z_i$. Then, there exists a finite set of places $S'\supset S$ such that, for all $v \not\in S'$, we have
\begin{enumerate}
\item For at most one $i \in \{1,\dots,n\}$, we have $n_w(\calZ_i,P) > 0$ for some $w \mid v \in \Omega_{L_i}$.
\item $n_w(\calZ_i,P) = 0$ for all $i \in \{1,\dots,n\}$ and $w \mid v \in \Omega_{L_i}$ if $Z_i$ is not defined over $K_v$.
\end{enumerate}
\end{lemma}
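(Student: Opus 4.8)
The plan is to exploit the geometry of the smooth divisor $Z$ together with a spreading-out argument. First I would spread out: choose a nonempty open $U \subseteq \Spec \calO_S$ over which $\calX$ is smooth, $\calZ$ is a relative effective Cartier divisor, the components $\calZ_i = \cl_{\calX}Z_i$ are defined over (the spread-out of) $\calO_{L_i}$, they are pairwise disjoint over $U$ (since $Z$ is smooth, hence $Z_{\overline{K}} = \bigsqcup Z_i$ is a disjoint union of smooth components — a strict normal crossings divisor with no actual crossings, as $Z$ is itself smooth), and each $\calZ_i$ is smooth over the corresponding base. Enlarging $S$ to the finite set $S'$ of primes outside this $U$ (together with the primes ramifying in the compositum of the $L_i$, and the primes below the discriminant loci) is harmless. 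Throughout, I would use Lemma~\ref{lem:affint} to compute $n_w(\calZ_i,P)$ via a local equation $f_i$ for $\calZ_i$, and Lemma~\ref{lem:stab} to pass between multiplicities over different fields.

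For part (i): suppose $v \not\in S'$ and $n_w(\calZ_i,P) > 0$ and $n_{w'}(\calZ_j,P) > 0$ for some $i \neq j$, $w \mid v$ in $L_i$, $w' \mid v$ in $L_j$. Passing to the compositum $E$ of the $L_i$ and a place $W \mid v$ of $E$, Lemma~\ref{lem:stab} (and the unramifiedness built into $S'$) shows the corresponding $W$-adic multiplicities are still positive, so the reduction $\calP_{\F_W}$ of the $\calO_E$-point lies on both $(\calZ_i)_{\F_W}$ and $(\calZ_j)_{\F_W}$. But $\calZ_i$ and $\calZ_j$ are disjoint over $U \supseteq$ the image of $\Spec \calO_W$, a contradiction. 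Hence at most one $i$ can have $n_w(\calZ_i,P) > 0$.

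For part (ii): if $Z_i$ is not defined over $K_v$, then the decomposition group $G_v \subseteq \Gal(E/K)$ does not fix $Z_i$; concretely there is $g \in G_v$ with $gZ_i = Z_j$ for some $j \neq i$. Since $g$ fixes $v$, it permutes the places $W \mid v$ of $E$ and carries the $\calO_E$-point $\calP_{\calO_E}$ to itself (as $P$ is $K$-rational, or at least $K_v$-rational at $v$), so $n_{gW}(\calZ_j, P) = n_W(\calZ_i,P)$ for an appropriate $W$; taking $W$ with $gW$ lying over the chosen place of $L_j$ and using Lemma~\ref{lem:stab} to descend, positivity of $n_{w}(\calZ_i,P)$ would force positivity of $n_{w'}(\calZ_j,P)$ for $i \neq j$, contradicting part (i). Therefore $n_w(\calZ_i,P) = 0$ for all such $i$ and all $w \mid v$.

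The main obstacle I anticipate is the bookkeeping in part (ii): carefully tracking how the Galois action on $\overline{K}$-components interacts with the choice of places $w \mid v$ in each $L_i$ and with the $\calO_v$-extension of $P$, and ensuring the spreading-out set $S'$ is chosen so that all the needed reductions are smooth, the $\calZ_i$ genuinely disjoint, and the field extensions involved unramified so that Lemma~\ref{lem:stab} gives clean comparisons. Once the geometric input ``$Z$ smooth $\Rightarrow$ its geometric components are disjoint after spreading out'' is in place, the rest is a standard reduction-and-Galois argument.
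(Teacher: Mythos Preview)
Your proposal is correct and follows essentially the same approach as the paper: the key input is that smoothness of $Z$ forces its geometric components to be disjoint (the paper cites regularity results from Poonen for this), which after spreading out gives disjointness of the $(\calZ_i)_{\F_W}$ for all but finitely many $v$, yielding (i), and then (ii) follows from the decomposition-group argument you outline. The paper's proof is a terse two-sentence sketch of exactly this, so your more detailed Galois bookkeeping simply unpacks what the paper leaves implicit.
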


\begin{proof}
Since $Z$ is smooth, the $Z_i$ are disjoint; indeed, smoothness and geometric regularity coincide for schemes locally of finite type over a field \cite[Prop.~3.5.22(i)]{POO} and the irreducible components of a regular scheme are disjoint \cite[Prop.~3.5.5]{POO}. Thus, the geometric components of $(\calZ_{i})_{\FF_w}$ $w \mid v \in \Omega_{L_i}$ are disjoint for all but finitely many $v \not\in S$, thus the reduction of a local point can lie on at most one of them and can do so if and only if it is stable under the action of the decomposition group. The result follows.
\end{proof}

\begin{corollary} \label{cor:red}
Let $(X,D)$ be an orbifold over a global field $K$, where $D = \sum_{i=1}^r(1-\frac{1}{m_i})D_i$, and let $(\calX,\calD)$ be an $\calO_S$-model for some finite $S \subset \Omega_K$. If the $D_i$ are smooth (e.g.\ if $(X,D)$ is smooth), then for all but finitely many places $v \not\in S$ and $* \in \{\cc,\dd\}$, we have
\[
(\calX,\calD)^*_{\geom}(\O_v) = (\calX,\calD)^*_{\strong}(\O_v) = (\calX,\calD)^*(\O_v).
\]
\end{corollary}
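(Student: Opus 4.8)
The plan is to localise at each place and collapse all three conditions to a single one attached to one smooth geometric component defined over $K_v$, the mechanism being the preceding lemma. First I would enlarge $S$ to a finite set $S'$ absorbing: the finite sets of places produced by the preceding lemma applied to each $D_i$; the places of bad reduction of $\calX$; and the places ramifying in any of the fields $K_\gamma$ for $\gamma \mid i$, $1 \le i \le r$. This removes only finitely many places. We may also assume every $m_i$ is finite: for $m_i = \infty$ the defining condition is that the relevant intersection multiplicities all vanish, which is governed by exactly the same local dichotomy below.

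Fix $v \notin S'$ and $P \in X(K_v)$, with $\calP_{\calO_v} \in \calX(\calO_v)$ its extension. It suffices to prove, for each $i$, that the local conditions at $v$ cutting out geometric, strong and ordinary Campana (resp. Darmon) points from $\calD_i$ coincide. If $\calP_{\Fv} \notin (\calD_i)_{\Fv}$, then $n_v(\calD_i,P)$, every $n_v(\calD_{\beta_v},P)$ with $\beta_v \mid i$, and every $n_w(\calD_\gamma,P)$ with $\gamma \mid i$ and $w \mid v$ are all $0$, so all three conditions hold vacuously. Otherwise, conclusion (1) of the preceding lemma gives a unique geometric component $D_{\gamma_0} \mid i$ whose reduction meets $\calP_{\Fv}$, while conclusion (2) forces $D_{\gamma_0}$ to be defined over $K_v$; as $v$ is unramified in $K_{\gamma_0}$, this means the place $w_0 \mid v$ of $K_{\gamma_0}$ at which $\calP_{\Fv}$ lies on $(\calD_{\gamma_0})_{\F_{w_0}}$ satisfies $K_{\gamma_0,w_0} = K_v$. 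Hence $\calD_{\gamma_0}$ already lives as a subscheme of $\calX_{\calO_v}$, and, by disjointness of the $(\calD_\gamma)_{\overline{\Fv}}$ for $v \notin S'$, it is exactly the $\calO_v$-component of $\calD_i$ through $\calP_{\Fv}$.

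The key local identity is then
\[
n_v(\calD_i,P) = n_v(\calD_{\gamma_0},P) = n_{w_0}(\calD_{\gamma_0},P),
\]
the first equality because $\calD_i$ and $\calD_{\gamma_0}$ agree on an affine neighbourhood of $\calP_{\Fv}$ (the other geometric components being disjoint from it after reduction, and also missed by $P$ since they have multiplicity $0$) combined with Lemma~\ref{lem:affint}, and the second because $K_{\gamma_0,w_0} = K_v$ (or Lemma~\ref{lem:stab} with $e(w_0/v) = 1$). Since $n_w(\calD_\gamma,P) = 0$ for $\gamma \mid i$ with $\gamma \ne \gamma_0$ and $n_v(\calD_{\beta_v},P) = 0$ for $\beta_v \mid i$ not through $\calP_{\Fv}$, each of the three conditions for $\calD_i$ — $n_v(\calD_i,P) \in \ZZ_{\ge m_i} \cup \{0,\infty\}$; $n_v(\calD_{\beta_v},P) \in \ZZ_{\ge m_i}\cup\{0,\infty\}$ for all $\beta_v \mid i$; $n_w(\calD_\gamma,P) \in \ZZ_{\ge m_i}\cup\{0,\infty\}$ for all $\gamma \mid i$, $w \mid v$ — becomes the single condition $n_{w_0}(\calD_{\gamma_0},P) \in \ZZ_{\ge m_i}\cup\{0,\infty\}$, and identically for the corresponding Darmon divisibility conditions. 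Running over all $i$ and both $* \in \{\cc,\dd\}$ yields the asserted equalities of local point sets for every $v \notin S'$.

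The main obstacle is bookkeeping rather than substance: one has to be sure that the lemma's "defined over $K_v$" really does force $K_{\gamma_0,w_0} = K_v$ (this is where unramifiedness enters, via the identification of the decomposition group with the residue Galois group), that $\calD_{\gamma_0}$ descends to and is simultaneously identified with the relevant $\calO_v$-component of $\calD_i$ and — near $\calP_{\Fv}$ — with $\calD_i$ itself, and that the locality of $n_v$ is invoked correctly. Each step is routine, but keeping the fields, places and components aligned is where care is required.
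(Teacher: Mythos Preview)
Your proposal is correct and follows essentially the same approach as the paper: the corollary is stated immediately after the lemma with no separate proof, so the paper treats the deduction as routine, and what you have written is exactly the routine verification the paper omits. Your bookkeeping with the unique component $D_{\gamma_0}$, its descent to $K_v$, and the collapse of all three local conditions to the single quantity $n_v(\calD_{\gamma_0},P)$ is the intended mechanism.
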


\subsection{The modified PSTVA conjecture}
Having established much of the necessary notation and terminology, we will state the PSTVA conjecture and the conjecture of Chow--Loughran--Takloo--Bighash--Tanimoto for the leading constant.

Let $\left(X,D\right)$ be a smooth Campana orbifold over a number field $K$ which is \emph{klt} (i.e.\ all weights in $D$ are finite) and \emph{log Fano} (i.e.\ $-\left(K_X + D\right)$ is ample). Let $\left(\mathcal{X},\mathcal{D}\right)$ be a regular $\mathcal{O}_{S}$-model of $\left(X,D\right)$ for some finite $S \subset \Omega_K$ containing $\Omega_K^\infty$ (i.e.\ $\mathcal{X}$ is regular over $\mathcal{O}_{S}$). Let $\mathcal{L} = \left(L,\|\cdot\|\right)$ be an adelically metrised big line bundle with associated height $H_{\mathcal{L}}: X\left(K\right) \rightarrow \mathbb{R}_{>0}$ (see Definition~\ref{def:height}). For any $U \subset X\left(K\right)$ and $B \in \mathbb{R}_{>0}$, we define
\[
N\left(U,\mathcal{L},B\right) = \#\{P \in U: H_{\mathcal{L}}\left(P\right) \leq B\}.
\]

\begin{conjecture}[PSTVA conjecture] \cite[Conj.~1.1]{PSTVA} \label{conj:pstva}
Suppose that $L$ is nef and $\left(\mathcal{X},\mathcal{D}\right)\left(\mathcal{O}_{K,S}\right)$ is not thin. Then there exists a thin set $Z \subset \left(\mathcal{X},\mathcal{D}\right)\left(\mathcal{O}_{K,S}\right)$ and explicit positive constants $a = a\left(\left(X,D\right),L\right)$, $b = b\left(K,\left(X,D\right),L\right)$ and $c = c\left(K,S,\left(\mathcal{X},\mathcal{D}\right),\mathcal{L}\right)$ such that, as $B \rightarrow \infty$,
\[
N\left(\left(\mathcal{X},\mathcal{D}\right)\left(\mathcal{O}_{K,S}\right) \setminus Z,\mathcal{L},B\right) \sim c B^a\left(\log B\right)^{b-1}.
\]
\end{conjecture}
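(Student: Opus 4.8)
The PSTVA conjecture is open in general; the plan is to verify it in the toric situation of Theorem~\ref{thm:geom}, where $(X,D) = (X_\Sigma, D_{\Sigma,\m})$ and $\mathcal{L}$ is the log-anticanonical bundle, by adapting the Batyrev--Tschinkel height zeta function proof of Manin's conjecture for toric varieties and its extension to vector group compactifications by Pieropan, Smeets, Tanimoto and V\'arilly-Alvarado. First I would reduce to counting $\O_S$-Campana points lying in the open torus $T(K)$: the boundary $D_\Sigma(K)$ is contained in a proper closed subset, hence is thin, so it can be absorbed into the exceptional set $Z$ (and non-thinness results in the literature allow $Z$ to be taken empty here). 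When the $D_i$ are smooth, Corollary~\ref{cor:red} identifies ordinary, strong and geometric Campana points away from finitely many places, so it suffices to count geometric semi-integral points, whose local conditions decompose cleanly over the Galois orbits of the geometric boundary components; at each $v \notin S$, Lemma~\ref{lem:affint} turns the intersection multiplicities $n_v(\calD_i, P)$ into valuations of local equations for the boundary, making the Campana condition explicit in coordinates on $T$.

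Next I would form the height zeta function $Z(s) = \sum_P H_{\mathcal{L}}(P)^{-s}$, the sum taken over Campana points in $T(K)$, and rewrite it as an integral over $T(\AA_K)$ of the adelic height against the characteristic function of the adelic Campana locus, which factors as a product over places of the characteristic functions of $(\calX,\calD)^{\cc}(\O_v)$. Since the log-anticanonical height is $T(\AA_K)$-invariant up to a bounded boundary contribution, $Z(s)$ behaves like an $L$-function of $T$ whose local factors are perturbed by the Campana weights $m_i$. Applying Poisson summation on the compact group $T(\AA_K)/T(K)$ then expresses $Z(s)$ as a sum over automorphic characters $\chi$ of Fourier transforms $\widehat{H}(\chi, s)$; for good $v$ each local Fourier transform is a ratio of local $L$-factors with a correction term governed by the $m_i$, which one regularises using the combinatorial functions attached to the fan $\Sigma$ in Section~\ref{sec:fan}.

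It then remains to show that the dual sum over $\chi$ continues meromorphically past $\Re(s) = a$ with a pole of order exactly $b = \rank \Pic(X_\Sigma)$ at $s = a$: the trivial character supplies the main pole, while the nontrivial characters contribute holomorphically or with strictly smaller poles, using completeness and $G$-invariance of $\Sigma$ to control the relevant Artin $L$-functions. A standard Tauberian theorem then delivers $N \sim c B^a (\log B)^{b-1}$, and one computes $c$ as the resulting residue. \textbf{The main obstacle is the leading constant.} Tracking the convergence factors, the finitely many bad places and, crucially, the contribution of the Brauer group of $(X,D)$ shows that the residue one obtains is not the constant originally predicted by PSTVA but the refined one of Chow, Loughran, Takloo-Bighash and Tanimoto, which builds in the Brauer--Manin obstruction for Campana points. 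Thus what this strategy actually proves is the \emph{modified} PSTVA conjecture (Theorem~\ref{thm:camp}), and it is identifying the correct constant---rather than establishing the analytic asymptotic itself---that is the real crux of the argument.
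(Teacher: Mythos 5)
The statement you were asked about is a \emph{conjecture}, not a theorem: the paper does not prove Conjecture~\ref{conj:pstva} as stated, and neither does your proposal claim to. You correctly identify that what the height zeta function strategy actually yields is the \emph{modified} PSTVA conjecture (Conjecture~\ref{conj:CLTBT}), with the Chow--Loughran--Takloo-Bighash--Tanimoto leading constant, and your sketch of the proof of Theorems~\ref{thm:geom} and~\ref{thm:camp} follows the paper's own route: reduction to geometric semi-integral points via Corollary~\ref{cor:red}, formation of the height zeta function, Poisson summation over automorphic characters, regularisation of local Fourier transforms by fan functions, and Delange's Tauberian theorem.

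One minor misattribution: the device that turns the local semi-integrality condition into a combinatorial condition on the fan is not Lemma~\ref{lem:affint} (which computes intersection multiplicities against affine local equations and is only a general-purpose tool), but Proposition~\ref{prop:cochar} together with Corollary~\ref{cor:maincor}. These express the intersection multiplicities $n_w(\overline{\calT}_j,t_v)$ as the coefficients $\alpha_{i,w}$ appearing when one writes the image $\overline{t_v}$ of $t_v$ under the degree map as a non-negative integral combination of cone generators; it is exactly these coefficients, not valuations of local defining equations per se, that enter the geometric series making up $R^*_{\sigma,\mathbf{m},\mathrm{g},v}$ in Section~\ref{sec:fan}. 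That substitution aside, your outline captures the structure of the paper's argument and the essential point that the residue calculation rather than the analytic continuation is where the original PSTVA constant fails and the CLTBT constant emerges.
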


For the definition of thin sets, see \cite[\S9.1]{SER}; for the purpose of the conjecture, it is enough to regard them as ``sparse'' on varieties with abundant rational points.

\begin{note}
Note the following consequence of Corollary~\ref{cor:red}: to prove Conjecture~\ref{conj:pstva} for any orbifold with orbifold divisor having smooth irreducible components over the ground field, it suffices to prove the analogous result for geometric Campana points.
\end{note}

\subsubsection{The leading constant}

The leading constant $c = c\left(K,S,\left(\mathcal{X},\mathcal{D}\right),\mathcal{L}\right)$ is given by
\[
c\left(K,S,\left(\mathcal{X},\mathcal{D}\right),\mathcal{L}\right) = \frac{\alpha((X,D),L) \beta((X,D),L) \tau(K,S,(\calX,\calD),\calL)}{a((X,D),L) (b(K,(X,D),L)-1)!}
\]

The constant $a$ is defined by
\[
a((X,D),L) = \inf\{t \in \mathbb{R}: tL + K_X + D \in \Eff^1(X)\},
\]
where $\Eff^1(X)$ denotes the effective cone of $X$, and the constant $b$ is defined to be the codimension of the minimal supported face of $\Eff^1(X)$ containing $aL + K_X + D$.

Let us now focus on the case where $L = -K_X - D$ is the log-anticanonical divisor. We have $a = 1, b = \rank(\Pic(X))$. The definitions of $\alpha$, $\beta$ and $\tau$ of \cite[\S3.3]{PSTVA} read
\[
\begin{aligned}
\alpha((X,D),-K_X - D) & = \prod_{i=1}^r\frac{1}{m_i} \chi_{\Eff^1(X)}(-K_X - D),\\
\beta((X,D),-K_X - D) & = \#H^1(\Gamma,\Pic(\overline{X})),\\
\tau(K,S,(\calX,\calD),\calL) & = \tau_{X,D}(A),
\end{aligned}
\]
where $\tau_{X,D}$ is the Tamagawa measure as in \cite[\S3.3]{PSTVA} and $A$ is either the closure of $(\calX,\calD)^{\cc}(\O_S)$ in $(\calX,\calD)^{\cc}(\AA_{K,S})$ or the Brauer set $(\calX,\calD)^{\cc}(\AA_{K,S})^{\Br X}$ (see \cite{MNS}).

For a general line bundle $L$, the value $\chi_{\Eff^1(X)}(L) = \alpha(X,L)$ is Peyre's effective cone constant: for $(X,D) = (X_\Sigma,D_{\Sigma,\m})$ a toric orbifold, we may write $L = \sum_{i=1}^r v_i D_i$, and

\[
\chi_{\Eff^1(X)}(L) := \int_{\Eff^1(X)^*}e^{-\langle \mathbf{v}, \mathbf{y} \rangle} d\mathbf{y}. 
\]

In particular, for $L = -K_{X_{\Sigma}} - D_{\Sigma,\m}$, we have $\mathbf{v} = \mathbf{m}^{-1}$; by a change of variables $y_i \mapsto y_i/m_i$, and in view of \cite[Prop.~5.3]{BT2}, we have $\chi_{\Eff^1(X_{\Sigma})}(-K_{X_{\Sigma}} - D_{\Sigma,\m}) = \left(\prod_{i=1}^r m_i\right) \alpha(X_{\Sigma},-K_{X_{\Sigma}})$, hence $\alpha((X,D),-K_X - D) = \alpha(X,-K_X)$ in this case.

Let us now state the conjecture of Chow, Loughran, Takloo-Bighash and Tanimoto. Again, we restrict to the log-anticanonical case for simplicity.

\begin{conjecture}[Modified PSTVA conjecture] \label{conj:CLTBT} \cite[Conj.~8.3]{CLTBT}
Under the hypotheses of the PSTVA conjecture, we have, for $\calL$ a metrisation of $L=-K_X - D$,
\[
N\left(\left(\mathcal{X},\mathcal{D}\right)\left(\mathcal{O}_{K,S}\right) \setminus Z,\mathcal{L},B\right) \sim c' B^a\left(\log B\right)^{b-1},
\]
where $c' = c'(K,S,(\calX,\calD),\calL)$ decomposes as a product
\[
c' = \frac{\alpha((X,D),L)}{a((X,D),L)(b((X,D),L)-1)!} L^*(\Pic(\overline{X}),1) \lim_{\scrB} |\scrB| \lim_{S'} \tau_{X,D,S'}(\prod_{v \in S'} (\calX,\calD)^{\cc}_{\str}(\O_v)^{\scrB}),
\]
where $L^*(\Pic(\overline{X}),1)$ denotes the leading coefficient of the Artin $L$-function of $\overline{X}$ at $s = 1$, the group $\scrB$ runs over finite subgroups of $\Br(X,D) \cap \Br_1 T$, the set $S'$ ranges over finite subsets of $\Omega_K$ and the subscript $\str$ denotes points not lying on $D_{\red}$.
\end{conjecture}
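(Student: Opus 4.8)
The plan is to prove the asymptotic in the toric case — which yields Theorem~\ref{thm:geom}, and through it Theorem~\ref{thm:camp} and Conjecture~\ref{conj:CLTBT} for smooth toric orbifolds — by adapting the height zeta function method of Batyrev and Tschinkel \cite{BT2} so as to build in the semi-integrality conditions one place at a time. The first move is a reduction: via Corollary~\ref{cor:red} and the Note following Conjecture~\ref{conj:pstva} it suffices to treat \emph{geometric} semi-integral points and count $N_{\mathbf{g}}(\Sigma,\calX,\m,\ast;B)$, these being the objects best adapted to the Fourier-analytic machinery and to recognising the modified leading constant. As the points counted avoid $D_\Sigma$ they lie in $T(K)$, so one is really counting a subset of $T(K)$ cut out by congruence-type conditions at the places outside $S$.

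Next, form the height zeta function $Z(\ast,s)=\sum H_{\calL}(P)^{-s}$, summed over the relevant geometric $\calO_S$-Campana (resp.\ Darmon) points of $T(K)$. The metrised log-anticanonical height extends to a function $H$ on $T(\AA_K)$ whose local components are $T(\calO_v)$-invariant for $v\notin S$, so a Poisson-type summation on the compact quotient $T(\AA_K)^1/T(K)$ rewrites $Z(\ast,s)$ in terms of the global Fourier transforms $\widehat H(\chi,s)=\prod_v\widehat H_v(\chi_v,s)$ over the automorphic characters $\chi$ of $T(\AA_K)/T(K)$, the local integrals being taken, at $v\notin S$, over the local geometric semi-integral points. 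Reading $H$ off the fan $\Sigma$ via the orbit decomposition of $X_\Sigma$, the local transform at a good place is governed by the valuation vector $(n_v(\calD_i,P))_i$: the Campana condition confines its entries to $\ZZ_{\geq m_i}\cup\{0\}$ and the Darmon condition to $m_i\ZZ_{\geq 0}\cup\{0\}$, so in each case one obtains an explicit modified local $L$-factor, a rational function in $q_v^{-s}$ and the values of $\chi_v$, which vanishes unless $\chi_v$ is unramified and thereby truncates the character sum.

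The analytic heart, handled through the fan functions of Section~\ref{sec:fan}, is the regularisation: the bare Euler product $\prod_v\widehat H_v(\chi_v,s)$ converges only for $\Re s$ large, so one must split off a product of Hecke/Artin $L$-functions — for the characters trivial on the relevant local subgroups, essentially $L(s,\Pic(\overline{X_\Sigma}))$ — leaving a factor holomorphic and suitably bounded past $\Re s=1$, with the bounds uniform in $\chi$ so that the sum over $\chi$ converges and limits may be interchanged. One then locates the pole of $Z(\ast,s)$ at $s=1$, of order $b=\rank\Pic(X_\Sigma)$ coming from the rank of the effective cone, and a Tauberian theorem gives $N_{\mathbf{g}}(\Sigma,\calX,\m,\ast;B)\sim c\,B(\log B)^{b-1}$ with $c$ the corresponding residue.

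The last step, which I expect to be the main obstacle, is to identify this residue $c$ with the constant $c'$ of Conjecture~\ref{conj:CLTBT}. The residue computation naturally outputs $\frac{\alpha}{a(b-1)!}L^*(\Pic(\overline X),1)$ times an adelic integral, but that integral is \emph{not} a Tamagawa volume of the closure of the global semi-integral points; rather, each nontrivial character $\chi$ that is trivial on the good-place local subgroups contributes a term of the same order, and the substance of the argument is to show that these reorganise into the factor $\lim_{\scrB}|\scrB|\cdot\lim_{S'}\tau_{X,D,S'}(\prod_{v\in S'}(\calX,\calD)^{\cc}_{\str}(\calO_v)^{\scrB})$. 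For a torus this is exactly where the arithmetic becomes visible: $\Br(X,D)\cap\Br_1 T$ modulo constants is computed by $H^1(G,X^*(\overline T))$, and one matches the character sum against the Brauer--Manin pairing on semi-integral adelic points in the sense of \cite{MNS}, using that the good-place local transforms at $\chi_v=1$ are precisely the local strong Campana (equivalently, strong Darmon) densities. Tracking $S$, the Brauer classes, and the gap between ``closure of the global semi-integral points'' and ``product of local strong points cut out by $\scrB$'' simultaneously is the real difficulty; the toric hypothesis is what renders it tractable, since the height, the characters and the relevant cohomology are all encoded by $\Sigma$, $X_*(\overline T)$ and $G$.
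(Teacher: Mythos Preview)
The statement in question is a \emph{conjecture}, which the paper does not prove in general; your proposal is really an outline of the paper's proof of the toric case (Theorems~\ref{thm:geom} and~\ref{thm:camp}), not of Conjecture~\ref{conj:CLTBT} itself. With that understood, your outline is correct and follows essentially the same route as the paper: height zeta function, Poisson summation (Proposition~\ref{prop:psf}), regularisation of local Fourier transforms by Hecke $L$-functions via the fan functions of Section~\ref{sec:fan} (Proposition~\ref{prop:geomreg}, Corollary~\ref{cor:campreg}), an application of \cite[Thm.~5.7]{BOU} and the Tauberian theorem, and identification of the residue with $c'$ via the correspondence between contributing characters and orbifold Brauer elements.

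One small correction on the last step: you write that ``$\Br(X,D)\cap\Br_1 T$ modulo constants is computed by $H^1(G,X^*(\overline T))$'', but this conflates two distinct inputs. The contributing characters are matched with Brauer elements via Lemma~\ref{lem:br}, which gives $(T(\AA_K)/T(K))^\wedge_{\m}\cong\Br_e(X_\Sigma,D_{\Sigma,\m})/\B(T)$; the group $H^1(G,X^*(\overline T))$ enters separately, through Ono's formula $|H^1(G,X^*(\overline T))|\,L^*(X^*(\overline T),1)=\vol(T(\AA_K)^1/T(K))\,|\Sh(T)|$, which converts the volume normalisation from Poisson summation into the $L^*(\Pic(\overline X),1)$ factor. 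Keeping these two roles straight is exactly what makes the bookkeeping in the leading-constant identification come out correctly.
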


For the definition of $\Br(X,D)$, see Section~\ref{sec:brauer}.

\section{Toric varieties} \label{sec:tor}

\begin{mydef}
An \emph{algebraic torus} (or simply \emph{torus}) over a field $F$ is an algebraic group $T$ over $F$ such that $T_{\overline{F}} \cong \mathbb{G}_m^n$ for some $n \in \mathbb{Z}_{\geq 1}$. The splitting field of $T$ is the Galois extension $E/F$ of minimal degree such that $T_E \cong \mathbb{G}_m^n$.
\end{mydef}

\begin{mydef}
The \emph{character group} of a torus $T$ is $X^*\left(\overline{T}\right) = \Hom\left(\overline{T},\mathbb{G}_m\right)$, and the \emph{cocharacter group} of $T$ is the dual $X_*\left(\overline{T}\right) = \Hom\left(X^*\left(\overline{T}\right),\mathbb{Z}\right)$. We set $X^*\left(T\right) = \Hom\left(T,\mathbb{G}_m\right) = X^*\left(\overline{T}\right)^{G_F}$ and $X_*\left(T\right) = \Hom\left(X^*\left(T\right),\mathbb{Z}\right) = X_*\left(\overline{T}\right)^{G_F}$.
\end{mydef}

\begin{mydef}
Let $T$ be a torus over a field $F$. We say that $T$ is:
\begin{enumerate}
\item \emph{anisotropic} if $X^*\left(T\right)$ is the trivial group, and
\item \emph{split} if $T \cong \mathbb{G}_m^n$ for some $n \in \mathbb{Z}_{\geq 1}$, i.e.\ if its splitting field is $F$.
\end{enumerate}
\end{mydef}

\begin{mydef}
A \emph{toric variety} over $F$ is a variety $X/F$ equipped with a faithful action of an algebraic torus $T$ admitting an open dense orbit containing a rational point. We call such $X$ an \emph{equivariant compactification} (or simply \emph{compactification}) of $T$.
\end{mydef}

As a first example of a toric variety, we have projective space.

\begin{example}
Note that $\mathbb{P}^n$ is a compactification of the split torus $\mathbb{G}_m^n$: we have the isomorphism $\mathbb{G}_m^n \xrightarrow{\sim} \mathbb{P}^n \setminus \bigcup_{i=0}\{x_i = 0\}$, $(a_1,\dots,a_n) \mapsto [1:a_1:\ldots:a_n]$ and the action $\mathbb{G}_m^n \times \mathbb{P}^n \rightarrow \mathbb{P}^n$, $([1:a_1:\ldots:a_n] ,[x_0:\ldots:x_n]) \mapsto [x_0:a_1x_1:\ldots:a_n x_n]$.
\end{example}

Let us now give compactifications of the two anisotropic tori in \cite[Ex.~1.1.7]{BT}.

\begin{example}
Let $L/K$ be an extension of number fields of degree $d\geq 2$ with Galois closure $E$. A choice of $K$-basis $\boldsymbol{\omega} = \{\omega_0, \ldots, \omega_{d-1}\}$ gives rise to a norm form $N_{\boldsymbol{\omega}}(x_0, \ldots, x_{d-1}) := N_{L/K}(\omega_0x_0+\cdots +\omega_{d-1}x_{d-1})$. Then the norm torus $T_{\boldsymbol{\omega}} = \PP^{d-1}\backslash Z(N_{\boldsymbol{\omega}})$ is an anisotropic torus with splitting field $E$ and compactification $X_{\om} = \PP^{d-1}_K$ with boundary divisor $D_{\om} = Z\left(N_{\om}\right)$. We have the exact sequence
\[
0 \rightarrow \mathbb{G}_m \rightarrow R_{L/K}\mathbb{G}_m \rightarrow T_{\om} \rightarrow 0,
\]
thus $T_{\om} \cong R_{L/K}\mathbb{G}_m/\mathbb{G}_m$. For $d \geq 3$, the orbifold $\left(X_{\om},\left(1 - \frac{1}{m}\right)D_{\om}\right)$ is not smooth as $D_{\om}$ is not strict normal crossings.

Asymptotics for Campana points and weak Campana points of bounded height on $\left(X_{\om},\left(1 - \frac{1}{m}\right)D_{\om}\right)$ (with respect to the obvious projective model) were established in \cite{STR} under the hypotheses that $L = E$ and that $[L:K]$ is coprime to $m$ or prime. 
\end{example}

\begin{example}
Keeping the notation from the previous example, another anisotropic torus over $K$ with splitting field $E$ is the norm-one torus $T_{\om}^1 = X_{\om}^1 \setminus H_d$, where $H_d$ is the hyperplane $x_d = 0$ and $X_{\om}^1 = \{[x_0:\dots:x_d] \in \mathbb{P}^d : N_{\om}\left(x_0,\dots,x_{d-1}\right) = x_d^d\} \subset \mathbb{P}^d$. Note that $T_{\om}^1$ fits into the exact sequence
\[
0 \rightarrow T_{\om}^1 \rightarrow R_{L/K}\mathbb{G}_m \xrightarrow{N_{L/K}} \mathbb{G}_m \rightarrow 0,
\]
thus $T_{\om}^1 \cong R^1_{L/K}\mathbb{G}_m := \ker\left(R_{L/K}\mathbb{G}_m \xrightarrow{N_{L/K}} \mathbb{G}_m\right)$.

Note that $X_{\om}^1$ is a degree-$d$ hypersurface in $\mathbb{P}^d$ which is smooth away from $D_{\om}^1$ and has singularities along the intersections of the $d$ irreducible components of $D_{\om}^1$ over $E$. When $d=3$, we obtain a cubic surface with three isolated singularities along the geometrically reducible plane section $D_{\om}^1$. In this case, one may resolve this singular locus and obtain a degree-$6$ del Pezzo surface as a smooth compactification of $R^1_{L/K}\mathbb{G}_m$ (see \cite[Thm.~A]{CT}).
\end{example}

\subsection{Fans}
We now review foundational results connecting toric varieties and fans.

Let $M$ be a free abelian group of rank $d$ with dual group $N = \Hom\left(M,\mathbb{Z}\right)$. Write $N_\mathbb{R} = N \otimes_{\mathbb{Z}}\mathbb{R}$. Denote by $\langle \cdot , \cdot \rangle : M \times N \rightarrow \mathbb{Z}$ the dual pairing and its natural $\mathbb{R}$-linear extension to a pairing $M_{\mathbb{R}} \times N_{\mathbb{R}} \rightarrow \mathbb{R}$.

\begin{mydef}
A \emph{(convex polyhedral) cone} in $N_\mathbb{R}$ is a subset of the form $\sigma = \{\sum_{i=1}^r \lambda_i v_i: \lambda_i \in \mathbb{R}_{\geq 0} \text{ for all $i$}\}$, where $\{v_1,\dots,v_r\}$ is a finite collection of vectors in $N_{\mathbb{R}}$, called \emph{generators} of $\sigma$. (We allow the generating set to be empty, in which case we have the \textit{zero cone} $\sigma = \{0\}$.) The \textit{dimension} of $\sigma$ is the dimension of the smallest linear subspace of $N_{\RR}$ containing $\sigma$. The \textit{dual cone} $\check{\sigma} \subseteq M_{\RR}$ is the cone given by 
\[
\check{\sigma} = \{u \in M_{\RR}: \langle u,v \rangle \geq 0 \textrm{ for all }v\in \sigma\}.
\]
A \textit{face} of $\sigma$ is a cone of the form 
\[
\{v \in \sigma: \langle \lambda, v\rangle = 0\}
\]
for some $\lambda \in \check{\sigma}$.
A cone is \emph{strongly convex} if it contains no line through the origin (equivalently, if $\{0\}$ is a face). A cone is \emph{rational} if it is generated by elements in $N \subset N_{\mathbb{R}}$.
\end{mydef}

\begin{mydef}
A \emph{fan} in $N_{\mathbb{R}}$ is a finite set $\Sigma$ of strongly convex polyhedral cones in $N_{\mathbb{R}}$ such that
\begin{enumerate}
\item any face of a cone in $\Sigma$ is also in $\Sigma$, and
\item the intersection of any two cones in $\Sigma$ is a face of both cones.
\end{enumerate}
The \textit{dimension} of $\Sigma$ is the maximum dimension of its cones. We say that $\Sigma$ is \emph{complete} if $N_{\mathbb{R}}$ is the union of cones from $\Sigma$ and \emph{regular} if each $\sigma \in \Sigma$ is generated by part of a $\mathbb{Z}$-basis of $N$. For $d \in \mathbb{Z}_{\geq 1}$, we denote by $\Sigma(d)$ the collection of $d$-dimensional cones in $\Sigma$.
\end{mydef}

Let $T$ be a torus over a field $F$ with cocharacter group $N = X_*\left(\overline{T}\right)$ and splitting field $E$. Set $G = \Gal(E/F)$. Note that $G$ acts on $X^*(\overline{T})$, hence on $N$. Any fan $\Sigma$ in $N_{\mathbb{R}}$ gives rise to a normal equivariant compactification $X_{\Sigma}$ over $E$. See \cite[\S1]{FUL} for details.

\begin{example}
We continue with our running example $(\mathbb{P}^2,\sum_{i=0}^2(1-\frac{1}{m_i})D_i)$ from Section~\ref{sec:camp}. Consider the split torus $T = \mathbb{G}_m^2$ over $\mathbb{Q}$. We have $X^*(\overline{T}) \cong \mathbb{Z}^2$, so $N_{\mathbb{R}} \cong \mathbb{R}^2$. Let $\{e_0,e_1\}$ be a basis for $N$ and set $e_2 := -e_0 - e_1$. Denote by $\Sigma$ the fan with $k$-dimensional cones generated by the $k$-fold subsets of $\{e_0,e_1,e_2\}$ for $k = 0,1,2$. Then $X_{\Sigma}$ is isomorphic to $\mathbb{P}^2$. The torus $T$ can be identified with the complement of the coordinate hyperplanes $D_i$, so that the $D_i$ are the irreducible components of the boundary.
\end{example}

We have the following relationships between properties of $\Sigma$ and $X_{\Sigma}$:
\begin{enumerate}
\item  $\Sigma$ is complete if and only if $X_{\Sigma}$ is proper \cite[Thm.~3.1.19(c)]{CLS}.
\item $\Sigma$ is regular if and only if $X_{\Sigma}$ is smooth \cite[Thm.~3.1.19(a)]{CLS} (cf.~\cite[Def.~3.1.18(a)]{CLS}).
\item $\Sigma$ is polytopal if and only if $X_{\Sigma}$ is projective \cite[Thm.~3.2]{BRA}. 
\item For $\Sigma$ complete, regular and $G$-invariant, $X_{\Sigma}$ is defined over $F$ \cite[Cor.,~p.~192]{VOS}.
\end{enumerate}

\begin{mydef}
Let $\Sigma \subset N_{\mathbb{R}}$ be a fan. A continuous function $\varphi: N_{\mathbb{R}} \rightarrow \mathbb{R}$ is \emph{$\Sigma$-piecewise linear} if its restriction to any cone of $\Sigma$ is linear, and is \emph{integral} if $\varphi\left(N\right) \subset \mathbb{Z}$.
\end{mydef}

Given a complete regular fan $\Sigma \subset N_{\mathbb{R}}$, denote by $e_1,\dots,e_n$ the set of primitive integral generators of the one-dimensional cones in $\Sigma$. Associated to each one-dimensional cone $\mathbb{R}_{\geq 0}e_j$ is a torus orbit $T_j \subset X_{\Sigma}$ with Zariski closure $\overline{T_j}$ (see \cite[\S4.1]{CLS} or \cite[\S1.2]{BT}).

\begin{mydef}
Given a $\Sigma$-piecewise linear function $\varphi$, define the following objects:
\begin{enumerate}
\item The divisor $D_{\varphi} = \sum_{j=1}^n \varphi\left(e_j\right)\overline{T_j}$ on $T_E$.
\item The Cartier divisor $\{U_\sigma,\left(\varphi|_{\sigma}\right)^{-1}\}_{\sigma \in \Sigma}$, where $U_{\sigma} = \Spec \overline{F}[M \cap \check{\sigma}]$
\item The invertible sheaf $\mathcal{L}\left(\varphi\right)$ associated to the above Cartier divisor.
\end{enumerate}
\end{mydef}

We will now summarise the relationship between $\Sigma$-piecewise-linear functions, divisors and invertible sheaves on $X_{\Sigma}$. For this, we introduce the following notation:
\begin{enumerate}
    \item $\PL(\Sigma)^G$ is the group of $G$-invariant $\Sigma$-piecewise-linear integral functions on $N_{\mathbb{R}}$.
    \item $\Pic^T\left(X_{\Sigma}\right)$ is the group of $T$-linearised line bundles on $X_{\Sigma}$.
    \item $\Div^T\left(X_{\Sigma}\right)$ is the group of $T$-invariant Weil divisors on $X_{\Sigma}$.
\end{enumerate}

\begin{proposition} \cite[Prop.~1.2.9,~Cor.~1.3.9]{BT}
\begin{enumerate}
\item The map $\varphi \mapsto \calL(\varphi)$ gives rise to an isomorphism between $\PL(\Sigma)^G$ and $\Pic^T\left(X_{\Sigma}\right)$.
\item The map $\varphi \mapsto D_{\varphi}$ induces an isomorphism between $\PL(\Sigma)^G$ and $\Div^T\left(X_{\Sigma}\right)$.
\end{enumerate}
\end{proposition}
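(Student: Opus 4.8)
The plan is to establish both isomorphisms first over the splitting field $E$, where $T$ is split and $\overline{X_\Sigma} = X_{\Sigma,E}$ is the classical toric variety attached to $\Sigma$, and then to descend to $F$ by passing to $G$-invariants, using that $X_\Sigma$ is defined over $F$ and has the rational point $1_T$ (so Galois descent for divisors and line bundles is unobstructed). I would treat (2) first, as it is purely combinatorial. By the orbit--cone correspondence, the $T$-invariant prime divisors of $\overline{X_\Sigma}$ are exactly the closures $\overline{T_j}$ of the codimension-one orbits, one for each ray $\mathbb{R}_{\geq 0}e_j$, so $\Div^T(\overline{X_\Sigma}) = \bigoplus_{j=1}^n \mathbb{Z}\,\overline{T_j} \cong \mathbb{Z}^n$. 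An integral $\Sigma$-piecewise-linear function $\varphi$ is determined by $(\varphi(e_j))_j$ because it is linear on each cone, and conversely, since $\Sigma$ is regular the primitive generators of each maximal cone form a $\mathbb{Z}$-basis of $N$, so every integer tuple extends linearly and uniquely over each cone, with the extensions automatically agreeing on shared faces (they agree on the spanning generators). Hence $\varphi \mapsto D_\varphi = \sum_j \varphi(e_j)\overline{T_j}$ is a group isomorphism $\PL(\Sigma) \xrightarrow{\sim} \Div^T(\overline{X_\Sigma})$; it is $G$-equivariant because $G$ permutes the rays and the corresponding $\overline{T_j}$ compatibly, so taking $G$-invariants yields $\PL(\Sigma)^G \xrightarrow{\sim} \Div^T(X_\Sigma)$.

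For (1), I would check that $\varphi \mapsto \calL(\varphi)$, the line bundle of the $T$-Cartier divisor $\{(U_\sigma,(\varphi|_\sigma)^{-1})\}_{\sigma\in\Sigma}$, is a well-defined homomorphism: the local equations are monomials (as $\varphi|_\sigma$ is linear), their ratios on $U_\sigma \cap U_\tau = U_{\sigma\cap\tau}$ are regular units since $\varphi|_\sigma$ and $\varphi|_\tau$ agree on $\sigma \cap \tau$, and these monomial transition functions define the $T$-linearization; additivity of $\varphi$ corresponds to multiplicativity of transition functions. For injectivity, note that $\varphi \in \PL(\Sigma)$ has $\calL(\varphi)$ trivial as a line bundle if and only if $D_\varphi$ is principal if and only if $\varphi$ is globally linear, i.e.\ $\varphi = \langle m,\cdot\rangle$ for some $m \in M$; on this subgroup $M \subset \PL(\Sigma)$ the map records the character twist of the trivial bundle, which determines $m$, so $\calL(\varphi)$ is $T$-equivariantly trivial only when $m = 0$. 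For surjectivity, given a $T$-linearized line bundle $L$ I would restrict to each affine chart $U_\sigma$: since affine toric varieties of cones have trivial Picard group, the underlying bundle is trivial, and the $T$-linearization of $\calO_{U_\sigma}$ is given by a character $m_\sigma \in M$; the gluing on $U_{\sigma\cap\tau}$ forces $m_\sigma - m_\tau \in (\sigma\cap\tau)^\perp$, so the linear functions $\langle m_\sigma,\cdot\rangle$ patch to an integral $\Sigma$-piecewise-linear $\varphi$ with $\calL(\varphi) \cong L$. The construction is $G$-equivariant, so descending (again using $X_\Sigma(F) \neq \emptyset$) gives $\PL(\Sigma)^G \xrightarrow{\sim} \Pic^T(X_\Sigma)$; the two isomorphisms are compatible through the canonical map sending a $T$-Cartier divisor to its Weil-divisor class, which is an isomorphism here because regularity of $\Sigma$ makes every $T$-invariant Weil divisor Cartier.

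\textbf{Main obstacle.} The step I expect to be the crux is surjectivity in (1): one must show a $T$-linearized line bundle really is $T$-equivariantly trivial on each $U_\sigma$ --- resting on $\Pic(U_\sigma) = 0$ and on the classification of $T$-linearizations of $\calO_{U_\sigma}$ by characters modulo those coming from units --- and then verify that the locally extracted characters satisfy the cocycle condition needed to glue into a single piecewise-linear function. Once the split case is in hand the descent to $F$ is routine given the $G$-equivariance of all the constructions, though one should be careful that $\Pic^T$ and $\Div^T$ of $X_\Sigma$ are taken over $F$ and coincide with the $G$-fixed objects over $E$.
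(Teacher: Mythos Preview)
The paper does not supply its own proof of this proposition: it is stated with a citation to Batyrev--Tschinkel \cite[Prop.~1.2.9,~Cor.~1.3.9]{BT} and no proof environment follows. Your outline is a correct reconstruction of the standard argument one finds in the toric literature (and presumably in the cited reference): establish the bijections over the splitting field via the orbit--cone correspondence and the chart-by-chart trivialisation of $T$-linearised bundles, then descend by $G$-equivariance. The only point worth tightening is the identification $\Div^T(X_\Sigma) = \Div^T(\overline{X_\Sigma})^G$ and $\Pic^T(X_\Sigma) = \Pic^T(\overline{X_\Sigma})^G$, which you acknowledge but do not spell out; this uses that $X_\Sigma$ has a rational point (so $\Pic$ injects into the Galois-invariant geometric Picard group) together with the fact that the $T$-invariant divisors form a permutation $G$-module, making the descent for both divisors and their linearisations straightforward.
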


\begin{mydef}
The \emph{boundary divisor} $D_{\Sigma}$ of the compactification $X_\Sigma$ of $T$ is the complement of $T$ embedded in $X_{\Sigma}$. We label its irreducible components $D_i,i=1,\dots,r$.
\end{mydef}

\begin{note} \label{note:bdry}
Note that $D_{\Sigma}$ belongs to $ -K_{X_{\Sigma}}$, the anticanonical divisor class of $X_{\Sigma}$ \cite[Thm.~8.2.3]{CLS}. The associated $\Sigma$-piecewise linear function is the function $\varphi_\Sigma$ defined by $\varphi_{\Sigma}(e_j) = 1$ for all $j =1,\dots,n$.
\end{note}

\begin{proposition} \label{prop:bij}
\begin{enumerate}[leftmargin=*]
\item \cite[Prop.~1.15(ii)]{BT2} The irreducible components $D_i$ of $D_{\Sigma}$ are in bijection with the $G$-orbits of $\Sigma(1)$.
\item \cite[\S3.1]{BT} Letting $G_i$ be the stabiliser of a primitive integral generator of a cone in $\Sigma_i(1)$, we obtain, up to isomorphism, an extension $K_i = E^{G_i}$ of $K$, with $[K_i:K]$ equal to the cardinality of $\Sigma_i(1)$. 
\end{enumerate}
\end{proposition}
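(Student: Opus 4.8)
The plan is to deduce both parts from the orbit--cone correspondence for split toric varieties combined with Galois descent; indeed these statements are just restatements of \cite[Prop.~1.15(ii)]{BT2} and \cite[\S3.1]{BT}, and the argument below is a sketch of why they hold. For part (i), recall that over the splitting field $E$ the variety $X_{\Sigma}$ is split toric, and its boundary decomposes as $D_{\Sigma,E} = \bigcup_{j=1}^{n}\overline{T_j}$, where $\overline{T_j}$ is the closure of the torus orbit attached to the ray $\mathbb{R}_{\geq 0}e_j$, so the geometric irreducible components of $D_{\Sigma}$ are indexed bijectively by $\Sigma(1)$. Since $\Sigma$ is $G$-invariant, the descent datum defining $X_{\Sigma}$ over $K$ acts on this set of geometric components through the $G$-action on $\Sigma(1)$ induced by the action of $G$ on $N$; concretely $g\cdot\overline{T_j} = \overline{T_{g(e_j)}}$. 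A $K$-irreducible component of $D_{\Sigma}$ becomes, after base change to $\overline{K}$, a union of geometric components on which $G$ acts transitively, and conversely every $G$-orbit on $\Sigma(1)$ is the set of geometric components of a unique $K$-irreducible component; this gives the bijection $D_i \leftrightarrow \Sigma_i(1)$.

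For part (ii), fix $i$, choose a ray $\rho \in \Sigma_i(1)$ with primitive integral generator $e$, and set $G_i = \operatorname{Stab}_G(e) = \operatorname{Stab}_G(\rho)$ and $K_i = E^{G_i}$. Since $g \mapsto g(e)$ realises $\Sigma_i(1)$ as the $G$-orbit of $e$, the orbit--stabiliser theorem gives $\#\Sigma_i(1) = [G:G_i] = [K_i:K]$. It then remains to identify $K_i$ as the field of definition: the Galois action permutes $\{\overline{T_{\rho'}} : \rho' \in \Sigma_i(1)\}$ simply transitively via its action on $\Sigma_i(1)$, so the stabiliser of the closed subscheme $\overline{T_\rho}$ is exactly $G_i$, and by Galois descent (e.g.\ \cite[Cor.~4.9.5]{EGA}, exactly as invoked above for the fields $K_\gamma$) the minimal field of definition of $\overline{T_\rho}$ relative to $K$ is the fixed field $E^{G_i} = K_i$. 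In particular $D_i$ is obtained from $\overline{T_\rho}$ by descent from $K_i$ to $K$, and $[K_i:K] = \#\Sigma_i(1)$ as claimed.

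The only point requiring any care is this last descent step, translating ``stabiliser of a geometric component'' into ``minimal field of definition'', which is routine Galois descent and is precisely the mechanism already used in the excerpt for the $K_\gamma$; everything else is the standard fan/toric-boundary dictionary recalled above together with the orbit--stabiliser theorem, so the proof is essentially bookkeeping. Accordingly, in the write-up one may either run this short argument or simply cite \cite[Prop.~1.15(ii)]{BT2} and \cite[\S3.1]{BT}.
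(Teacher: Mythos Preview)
Your proposal is correct and follows the standard route (orbit--cone correspondence over the splitting field, Galois descent on geometric components, and orbit--stabiliser). The paper itself gives no proof at all: it simply records the statement with citations to \cite[Prop.~1.15(ii)]{BT2} and \cite[\S3.1]{BT}, treating it as a known fact. Your sketch is therefore more than the paper provides, and indeed your final sentence anticipates exactly what the paper does, namely just cite those references.
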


By the above, we may write
\[
\Sigma(1) = \bigcup_{i=1}^r \Sigma_i(1)
\]
for the decomposition of $\Sigma(1)$ into $G$-orbits. Define $\overline{e_i}:= \sum_{e_j \in \Sigma_j(1)} e_j$.

When $T$ is split, we have $D_{\Sigma} = \cup_{j=1}^n\overline{T_j}$. For $T$ not necessarily split, we have
\[
(D_i)_E = \cup_{e_j \in \Sigma_i(1)}\overline{T}_j.
\]

Taking $v \in \Omega_K \setminus \Omega_K^\infty$, each $\Sigma_i(1)$ decomposes into a union
\[
\Sigma_i(1) = \bigcup_{\substack{w \in \Omega_{K_i} \\ w \mid v}}\Sigma_{i,w}(1)
\]
of $G_v$-orbits indexed by the places $w$ of $K_i$ over $v$, with the length of $\Sigma_{i,w}(1)$ equal to the inertia degree $f_{i,w}$ of $w$ over $v$. We have a bijection between the irreducible components of $D_i$ over $K_v$ and the $G_v$-orbits $\Sigma_{i,w}$, so that we may write
\[
\left(D_i\right)_{K_v} = \bigcup_{i,w}D_{i,w}
\]
We then have, picking any place $V$ of $E$ extending $v$ in $K_i$, the decomposition
\[
\left(D_{i,w}\right)_{E_V} = \bigcup_{e_j \in \Sigma_{i,w}(1)}\overline{T_j}.
\]

\subsection{Degree maps} \label{sec:degmaps}

Let $T$ be a torus over a number field $K$ with splitting field $E$.

\begin{mydef} \label{def:kt}
For $v \in \Omega_K$, let $T\left(\mathcal{O}_v\right)$ denote the maximal compact subgroup of $T\left(K_v\right)$. Further, set $\mathbf{K}_T:=\prod_v T(\O_v)$.
\end{mydef}
\begin{mydef}
Let $v \in \Omega_K$ and $w \in \Omega_E$ with $w \mid v$.

For $v \in \Omega_K \setminus \Omega_K^\infty$ with ramification degree $e_v$ in $E/K$, define the maps
\[
\begin{aligned}
& \deg_{T,v}: T\left(K_v\right) \rightarrow X_*\left(T_v\right), \quad t_v \mapsto [\chi_v \mapsto v\left( \chi_v\left(t_v\right)\right)], \\
& \deg_{T,E,v} = e_v \deg_{T,v}.
\end{aligned}
\]

For $v \in \Omega_K^\infty$, define the maps
\[
\begin{aligned}
& \deg_{T,v}: T\left(K_v\right) \rightarrow X_*\left(T_v\right)_{\mathbb{R}}, \quad t_v \mapsto [\chi_v \mapsto \log\left|\chi_v\left(t_v\right)\right|_v], \\
& \deg_{T,E,v} = [E_w:K_v] \deg_{T,v}.
\end{aligned}
\]

Finally, define the maps
\[
\deg_T = \sum_{v \in \Omega_K} \left(\log q_v\right) \deg_{T,v}, \quad \deg_{T,E} = \sum_{v \in \Omega_K} \left(\log q_w\right) \deg_{T,E,v}.
\]
\end{mydef}
\begin{lemma} \label{lplem} \cite[\S2.2]{BOU}, \cite[\S4.2]{LOU} Let $v \in \Omega_K$ and $f \in \{\deg_{T,v},\deg_{T,E,v}\}$.
\begin{enumerate}[label=(\roman*)]
\item \label{lplem1} If $v$ is non-archimedean, then we have the exact sequence
\[
0 \rightarrow T\left(\mathcal{O}_v\right) \rightarrow T\left(K_v\right) \xrightarrow{f} X_*\left(T_v\right).
\]
The image of $f$ is open and of finite index. Further, if $v$ is unramified in $E$, then $f$ is surjective.
 \item \label{lplem2} If $v$ is archimedean, then we have the short exact sequence
\[
0 \rightarrow T\left(\mathcal{O}_v\right) \rightarrow T\left(K_v\right) \xrightarrow{f} X_*\left(T_v\right)_{\mathbb{R}} \rightarrow 0.
\]
Further, $f$ admits a canonical section.
\item \label{lplem3} Letting $g$ be either $\deg_T$ or $\deg_{T,E}$ and denoting its kernel by $T\left(\mathbb{A}_K\right)^1$, we have the split short exact sequence
\[
0 \rightarrow T\left(\mathbb{A}_K\right)^1 \rightarrow T\left(\mathbb{A}_K\right) \xrightarrow{g} X_*\left(T\right)_{\mathbb{R}} \rightarrow 0,
\]
hence we have an isomorphism
\[
T\left(\mathbb{A}_K\right) \cong T\left(\mathbb{A}_K\right)^1 \times X_*\left(T\right)_{\mathbb{R}}.
\]
\end{enumerate}
\end{lemma}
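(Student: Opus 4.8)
The plan is to obtain the local statements (i) and (ii), following \cite[\S2.2]{BOU} and \cite[\S4.2]{LOU}, from the structure theory of tori over local fields, and then to deduce the adelic statement (iii) formally from the archimedean sections produced in (ii). Since $\deg_{T,E,v}$ is a positive integer multiple of $\deg_{T,v}$, and $\deg_{T,E}$ differs from $\deg_T$ only by positively rescaling each local term, all assertions are insensitive to the choice within $\{\deg_{T,v},\deg_{T,E,v}\}$, so I would work throughout with $f=\deg_{T,v}$ and, in (iii), $g=\deg_T$.

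For (i), fix $v\in\Omega_K^f$. The inclusion $T(\mathcal{O}_v)\subseteq\ker f$ is immediate, since every $\chi_v\in X^*(T_v)$ carries the compact group $T(\mathcal{O}_v)$ into $\mathcal{O}_v^*$. For the reverse inclusion I would pass to the maximal split quotient $q\colon T_v\twoheadrightarrow T^{\mathrm{spl}}$, which is split of rank $r=\rank X^*(T_v)$; under the resulting identification $X_*(T_v)\cong\mathbb{Z}^r$, the map $f$ becomes the composite $T(K_v)\xrightarrow{q}T^{\mathrm{spl}}(K_v)=(K_v^*)^r\to\mathbb{Z}^r$, the last arrow applying $v$ coordinatewise. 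Hence $\ker f=q^{-1}((\mathcal{O}_v^*)^r)$ fits in an exact sequence $1\to T^{\mathrm{an}}(K_v)\to\ker f\to(\mathcal{O}_v^*)^r$ with $T^{\mathrm{an}}=\ker q$ anisotropic; anisotropic tori over local fields have compact groups of points (concretely, a Weil-restriction embedding $T^{\mathrm{an}}\hookrightarrow R_{E_w/K_v}\mathbb{G}_m^d$ carries $T^{\mathrm{an}}(K_v)$ into $(\mathcal{O}_{E_w}^*)^d$), so $\ker f$ is compact; being also open, it equals the unique maximal compact subgroup $T(\mathcal{O}_v)$. The image of $f$ has full rank in $X_*(T_v)$ (the map $T(K_v)\xrightarrow{q}T^{\mathrm{spl}}(K_v)$ has finite cokernel, sitting inside the finite group $H^1(K_v,T^{\mathrm{an}})$, and $(K_v^*)^r\to\mathbb{Z}^r$ is onto), hence is open of finite index. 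Finally, for $v$ unramified in $E$ I would deduce surjectivity by spreading $T$ out to a torus $\mathcal{T}$ over $\mathcal{O}_v$ with good reduction and taking $G_v$-cohomology of $1\to T(\mathcal{O}_{E_w})\to T(E_w)\to X_*(\overline{T})\to1$: surjectivity of $f$ reduces to the vanishing of $H^1(G_v,T(\mathcal{O}_{E_w}))$, which follows from Hilbert 90 (additive and multiplicative) for the residue field of $E_w$, applied along the unit filtration.

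For (ii), let $v\in\Omega_K^\infty$. Exactly as in (i) (the maximal compact subgroup of $K_v^*$ being $\{x\in K_v^*:|x|_v=1\}$ in this case too), $\ker f$ is the preimage under the maximal split quotient of the maximal compact subgroup of $(K_v^*)^r$, hence an extension of the compact group $T^{\mathrm{an}}(K_v)$ by a compact group; so $\ker f$ is compact, and therefore equals $T(\mathcal{O}_v)$. Surjectivity of $f$ follows by evaluating on cocharacters, $f(\mu(r))=\log|r|_v\cdot\mu$ for $\mu\in X_*(T_v)$ and $r\in K_v^*$, since the $\mu$ span $X_*(T_v)_{\mathbb{R}}$. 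Thus $0\to T(\mathcal{O}_v)\to T(K_v)\xrightarrow{f}X_*(T_v)_{\mathbb{R}}\to0$ is exact; its right-hand term being a vector group, the sequence splits, and \cite{BOU,LOU} single out a canonical homomorphic section (obtained from the positive part of a maximal split subtorus of $T_v$).

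For (iii), I would fix a single archimedean place $v_0$ and use the natural surjection $p_{v_0}\colon X_*(T_{v_0})_{\mathbb{R}}\twoheadrightarrow X_*(T)_{\mathbb{R}}$ given by averaging over $G=\Gal(E/K)$, which restricts to the identity on $X_*(T)_{\mathbb{R}}$ viewed inside $X_*(T_{v_0})_{\mathbb{R}}$; write $\iota$ for that inclusion, so $p_{v_0}\circ\iota=\mathrm{id}$. Embedding $T(K_{v_0})$ into $T(\mathbb{A}_K)$ by the identity at all other places, the restriction of $g$ to $T(K_{v_0})$ is a positive multiple of $p_{v_0}\circ\deg_{T,v_0}$, hence surjective by (ii); so $g$ surjects onto $X_*(T)_{\mathbb{R}}$ with kernel $T(\mathbb{A}_K)^1$ by definition, and composing $\iota$, a rescaling, the section $s_{v_0}$ from (ii) and the inclusion at $v_0$ produces a continuous homomorphic section of $g$. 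Since all the groups involved are abelian, such a section gives the asserted direct product decomposition; the statement for $\deg_{T,E}$ is identical. The step I expect to be the main obstacle is the compactness of $\ker f$ in (i), equivalently that $T(\mathcal{O}_v)$ already accounts for everything $f$ kills, together with the unramified surjectivity via the vanishing of $H^1$; once (i) and (ii) are in hand, (iii) is purely formal.
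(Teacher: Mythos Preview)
The paper does not give its own proof of this lemma; it is stated with the citations \cite[\S2.2]{BOU} and \cite[\S4.2]{LOU} and no proof environment follows. Your sketch is a correct reconstruction of the standard argument from those references: local structure theory of tori (maximal split quotient, compactness of anisotropic points, Lang/Hilbert~90 at unramified places) for (i) and (ii), and an adelic section built from a single archimedean section for (iii). There is nothing in the paper to compare your approach against.
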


\begin{mydef}
Let $\chi = \left(\chi_v\right)_v: T\left(\mathbb{A}_K\right) \rightarrow S^1$ be a character for a torus $T$ over a number field $K$.
\begin{enumerate}[label=(\roman*)]
\item We say that $\chi$ is \emph{automorphic} if it is trivial on $T(K)$.
\item We say that $\chi$ is \emph{unramified at $v \in \Omega_K$} if $\chi_v$ is trivial on $T\left(\mathcal{O}_v\right)$.
\item We say that $\chi$ is \emph{unramified} if it is unramified at all $v \in \Omega_K$.
\end{enumerate}
\end{mydef}

\subsection{Toric orbifolds}

Now that we have established some familiarity with toric varieties, let us discuss natural orbifolds associated to them.

\begin{mydef}
Given $\mathbf{m} = \left(m_1,\dots,m_r\right) \in \mathbb{Z}_{\geq 1}^{r}$, we define the effective Cartier $\mathbb{Q}$-divisor $D_{\Sigma,\mathbf{m}} = \sum_{i=1}^r \left(1-\frac{1}{m_i}\right)D_i$ on $X_{\Sigma}$. A \emph{toric orbifold} is an orbifold of the form $(X_{\Sigma},D_{\Sigma,\mathbf{m}})$.
\end{mydef}

\begin{example}
Our running example $(X,D) = (\mathbb{P}^2_\mathbb{Q}, \sum_{i=0}^2(1-\frac{1}{m_i})D_i)$ is a toric orbifold.
\end{example}

\begin{lemma} \cite[\S8.1,~p.~360]{CLS} \label{lem:snc}
If $X_{\Sigma}$ is smooth, then $\sum_{j=1}^n\overline{T_j}$ is a strict normal crossings divisor. In particular, all of the $\overline{T_j}$ are smooth and irreducible.
\end{lemma}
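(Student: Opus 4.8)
The plan is to reduce the statement to the standard local description of a smooth toric variety, exploiting that the strict normal crossings property is Zariski-local on $X_\Sigma$. Since $X_\Sigma$ is smooth, the fan $\Sigma$ is regular by \cite[Thm.~3.1.19(a)]{CLS}, so every cone $\sigma \in \Sigma$ is generated by part of a $\mathbb{Z}$-basis of $N$. The affine charts $U_\sigma = \Spec \overline{F}[M \cap \check\sigma]$, $\sigma \in \Sigma$, form an open cover of $X_\Sigma$, so it suffices to verify the claim on each $U_\sigma$.

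First I would fix $\sigma \in \Sigma$ of dimension $k$ with primitive generators $e_{i_1},\dots,e_{i_k}$ and extend them to a $\mathbb{Z}$-basis $e_{i_1},\dots,e_{i_d}$ of $N$, letting $m_1,\dots,m_d \in M$ be the dual basis. A direct computation gives $\check\sigma = \mathbb{R}_{\geq 0}m_1 + \dots + \mathbb{R}_{\geq 0}m_k + \mathbb{R} m_{k+1} + \dots + \mathbb{R} m_d$, so that $M \cap \check\sigma$ is the monoid generated by $m_1,\dots,m_k$ and $\pm m_{k+1},\dots,\pm m_d$; hence $U_\sigma \cong \mathbb{A}^k \times \mathbb{G}_m^{d-k}$ over $\overline{F}$, with coordinates $y_1,\dots,y_k$ on the affine factor corresponding to the monomials $m_1,\dots,m_k$.

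Next I would invoke the orbit--cone correspondence \cite[Thm.~3.2.6]{CLS}: for a ray $\rho_j = \mathbb{R}_{\geq 0} e_j \in \Sigma(1)$, the orbit closure $\overline{T_j}$ meets $U_\sigma$ precisely when $\rho_j$ is a face of $\sigma$, i.e.\ $j \in \{i_1,\dots,i_k\}$, and when $e_j = e_{i_\ell}$ one checks that $\overline{T_j} \cap U_\sigma = \{y_\ell = 0\}$, the $\ell$-th coordinate hyperplane of the affine factor. Consequently
\[
\left(\sum_{j=1}^n \overline{T_j}\right) \cap U_\sigma = \{\, y_1 y_2 \cdots y_k = 0 \,\} \subset \mathbb{A}^k \times \mathbb{G}_m^{d-k},
\]
which is visibly a strict normal crossings divisor. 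Since the $U_\sigma$ cover $X_\Sigma$, the divisor $\sum_{j=1}^n \overline{T_j}$ is strict normal crossings, proving the first assertion. The ``in particular'' is then immediate: on each chart every $\overline{T_j}$ is either empty or cut out by a single coordinate, hence smooth; and $\overline{T_j}$ is the Zariski closure of the orbit $T_j$, which is irreducible, being isomorphic to an algebraic torus, so $\overline{T_j}$ is irreducible.

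The argument presents no serious obstacle; the one point needing care is the bookkeeping of which boundary components are visible on a given chart $U_\sigma$ (a ray need not be a face of $\sigma$, so some $\overline{T_j}$ do not meet $U_\sigma$), but this is exactly what the orbit--cone correspondence controls and does not affect the conclusion.
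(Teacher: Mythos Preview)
Your argument is correct and is exactly the standard proof: pass to the affine charts $U_\sigma$, use regularity of $\Sigma$ to identify $U_\sigma \cong \mathbb{A}^k \times \mathbb{G}_m^{d-k}$, and invoke the orbit--cone correspondence to see that the boundary divisor restricts to a union of coordinate hyperplanes. The paper does not give its own proof of this lemma; it simply cites \cite[\S8.1,~p.~360]{CLS} as the source, and what you have written is essentially the argument one finds there.
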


\subsection{Brauer elements and characters} \label{sec:brauer}

Let us now discuss Brauer groups associated to toric orbifolds with a view towards the conjectural leading constant of Chow et.\ al.

Recall that the \emph{Brauer group} of a $k$-variety $X$ is $\Br(X) = H^2_{\text{\'et}}(X,\mathbb{G}_m)$. For a field $k$, we set $\Br(k) := \Br(\Spec k)$. By functoriality, we have a map $\Br(k) \rightarrow \Br(X)$, the image of which we denote by $\Br_0(X)$; this map is injective when $X(k) \neq \emptyset$, thus we may identify $\Br_0(X)$ with $\Br(k)$. We also have the algebraic part $\Br_1(X) = \ker(\Br(X) \rightarrow \Br(\overline{X}))$.

In our toric setup, we have the following groups, cf.\ \cite[\S4.3]{LOU}, \cite[\S8.2]{CLTBT}:

\[
\begin{aligned}
\B(T) & = \ker (\Br_1(T) \rightarrow \prod_{v \in \Omega_K}\Br_1(T_v)), \\
\Br_e(T) & = \{\calA \in \Br(T): \calA(1_T) = 0 \in \Br(K)\}, \\
\Br(X_{\Sigma},D_{\Sigma,\mathbf{m}}) & = \{\calA \in \Br(T): m_i \partial_{D_i}(\calA) = 0 \text{ for all $i = 1,\dots,r$}\}, \\
\Br_1(X_{\Sigma},D_{\Sigma,\mathbf{m}}) & = \Br(X_{\Sigma},D_{\Sigma,\mathbf{m}}) \cap \Br_1(T), \\
\Br_e(X_{\Sigma},D_{\Sigma,\mathbf{m}}) & = \Br(X_{\Sigma},D_{\Sigma,\mathbf{m}}) \cap \Br_e(T).
\end{aligned}
\]

\begin{note}
In the definition of $\Br_e(T)$, we make implicit use of the pairing
\begin{equation} \label{eq:torbr}
\Br(T) \times T(K) \rightarrow \Br K, \quad (\calA,P) \mapsto \calA(P).
\end{equation}

This is not to be confused with the pairing
\[
\Br(T) \times T(\mathbb{A}_K) \rightarrow \mathbb{Q}/\mathbb{Z},
\]
coming from local version of \eqref{eq:torbr} at each place $v$ followed by maps $\inv_v: \Br K_v \hookrightarrow \mathbb{Q}/\mathbb{Z}$.
\end{note}

We have the following canonical isomorphism:
\[
\Br_1(T) \cong \Br_0(T) \oplus \Br_e(T).
\]

In particular, since $T(K) \neq \emptyset$, we have a canonical isomorphism
\[
\Br_e(T) \cong \Br_1(T)/\Br(K).
\]

We are now ready to state the key result linking Brauer elements and toric characters.

\begin{lemma} \label{lem:br}
Defining $(T(\AA_K)/T(K))^\wedge_{\mathbf{m}} := \{\chi \in (T(\AA_K)/T(K))^\wedge: \chi_i^{m_i} = 1 \text{ for all } i =1,\dots,r\}$,
we have an isomorphism 
\[
(T(\AA_K)/T(K))^\wedge_{\mathbf{m}} \cong  \Br_e(X_{\Sigma},D_{\Sigma,\mathbf{m}})/\B(T).
\]
\end{lemma}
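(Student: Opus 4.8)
The plan is to identify both sides with the dual of a common finite group via Poitou--Tate-type duality for tori, building on the degree maps of Section~\ref{sec:degmaps} and the standard description of $\Br_1$ of a torus in terms of Galois cohomology. The starting point is the canonical identification $\Br_e(T) \cong H^1(G, X^*(\overline{T}))$ (equivalently $\Br_1(T)/\Br(K)$), under which the residue maps $\partial_{D_i}$ and hence the condition $m_i\partial_{D_i}(\calA) = 0$ become a combinatorial condition on cocycles that can be read off from the fan $\Sigma$. Dually, by local and global Tate duality for the finite $G_v$- and $G$-modules involved, $H^1(G, X^*(\overline{T}))$ pairs perfectly with a suitable subquotient of the id\`ele class group of $T$, and the subgroup $\B(T)$ is exactly the annihilator of $T(\AA_K)/T(K)$ inside it. So the heart of the matter is to show that, under these dualities, the $\bfm$-torsion condition defining $\Br(X_\Sigma,D_{\Sigma,\bfm})$ on the Brauer side matches the condition $\chi_i^{m_i} = 1$ on the character side.

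First I would set up the Brauer side: using the exact sequence $0 \to \overline{F}^* \to \overline{F}(T)^* \to \Div(\overline{T_j}\text{'s}) \to 0$ on the toric variety and the associated long exact sequence in $G$-cohomology, express $\Br_1(X_\Sigma, D_{\Sigma,\bfm})/\B(T)$ as a subquotient of $H^1(G, X^*(\overline{T}))$ cut out by the vanishing of $m_i$ times the composite of a cocycle with the $i$-th residue. Concretely, the residue along $D_i$ is governed by the primitive generators in $\Sigma_i(1)$ and the stabiliser $G_i$, via Proposition~\ref{prop:bij}; I would record the precise map $H^1(G, X^*(\overline{T})) \to H^1(G_i, \ZZ)$ (or rather into a coinduced module) and observe that $m_i \partial_{D_i}(\calA) = 0$ says the image is killed by $m_i$.

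Next I would set up the character side: by Lemma~\ref{lplem}\,\ref{lplem3} and Pontryagin duality, $(T(\AA_K)/T(K))^\wedge$ sits in an exact sequence relating it to $X_*(T)_\RR^\wedge$ and $(T(\AA_K)^1/T(K))^\wedge$, and the automorphic characters that are finite order correspond to classes in a Galois cohomology group by class field theory for tori (the ``$\B$-sequence'' of Colliot-Th\'el\`ene--Sansuc); the subgroup killed by $\chi_i^{m_i} = 1$ is again an $\bfm$-torsion condition, where $\chi_i$ denotes the restriction of $\chi$ to the $i$-th boundary (defined via the local degree maps at the components of $D_i$). Matching the two $\bfm$-torsion conditions is the crux: both are pullbacks of ``multiplication by $m_i$'' along the map indexed by the $G$-orbit $\Sigma_i(1)$, so once the underlying duality $\Br_e(T)/\B(T) \cong (T(\AA_K)/T(K))^\wedge$ is in place and shown to be compatible with restriction to $D_i$, the claimed isomorphism for the $\bfm$-torsion subgroups follows by taking annihilators.

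The main obstacle I anticipate is bookkeeping the residue maps $\partial_{D_i}$ and the ``local characters at $D_i$'' in a way that makes their duality manifest: the component $D_i$ is only geometrically reducible, with its geometric pieces permuted by $G$ and further organised by $G_v$ over each place (as in the fan decompositions $\Sigma_i(1) = \cup_w \Sigma_{i,w}(1)$), so one must carefully use Shapiro's lemma to pass between $H^*(G,-)$ of induced modules and $H^*(G_i,-)$, and likewise on the id\`elic side, checking that ramification indices and inertia degrees enter compatibly (this is where the distinction between $\deg_{T,v}$ and $\deg_{T,E,v}$ in Section~\ref{sec:degmaps} will matter). Once that compatibility is pinned down, the rest is a diagram chase combining the $\B(T)$-sequence with global duality. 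I would expect the cleanest route is to first prove the ambient isomorphism $\Br_e(T)/\B(T) \cong (T(\AA_K)/T(K))^\wedge$ (essentially a restatement of known duality, cf.\ \cite{LOU,CLTBT}), then cut out the $\bfm$-torsion on both sides simultaneously.
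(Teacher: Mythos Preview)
Your ``cleanest route'' at the end --- establish the ambient isomorphism $\Br_e(T)/\B(T) \cong (T(\AA_K)/T(K))^\sim$ and then restrict to the $\mathbf{m}$-torsion condition on each side --- is exactly what the paper does, so the strategy is sound. The difference is one of economy: rather than building the ambient duality from Poitou--Tate and $H^1(G,X^*(\overline{T}))$ as you sketch, the paper simply quotes two commutative diagrams from \cite[Lem.~4.7]{LOU}. The first has exact top row $0 \to \B(T) \to \Br_e(T) \to (T(\AA_K)/T(K))^\sim \to 0$ and bottom row given by the analogous sequences for the tori $R_{K_i/K}\mathbb{G}_m/\mathbb{G}_m$, with the vertical map on the right being exactly the character correspondence $\chi \mapsto (\chi_i)_i$; the second identifies the vertical map on $\Br_e$ with $\bigoplus_i \partial_{D_i}$ up to sign. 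With these in hand, the matching of the two $\mathbf{m}$-torsion conditions is immediate --- no Shapiro, no ramification bookkeeping, and no appeal to the local degree maps of Section~\ref{sec:degmaps}.

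Two small points to tighten. First, the ambient isomorphism lands in $(T(\AA_K)/T(K))^\sim$, the $\mathbb{Q}/\mathbb{Z}$-dual, not in the full $S^1$-dual $(T(\AA_K)/T(K))^\wedge$; the paper's first step is precisely to observe that the condition $\chi_i^{m_i}=1$ already forces $\chi$ into $(T(\AA_K)/T(K))^\sim$, so that the cited diagram applies. Second, your description of $\chi_i$ as a ``restriction to the $i$-th boundary via local degree maps'' is off: in the paper $\chi_i$ is the pullback of $\chi$ along the morphism $\gamma:\prod_i R_{K_i/K}\mathbb{G}_m \to T$, which is purely group-theoretic and already packaged into the diagrams of \cite{LOU}.
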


\begin{proof}
First, note that $\chi \in (T(\AA_K)/T(K))^\wedge_{\mathbf{m}}$ belongs to the subgroup $(T(\AA_K)/T(K))^\sim$, as for such $\chi_i$, we have $\chi_i \in (\AA_{K_i}^*/K_i^*)^\sim$ for each $i \in 1,\dots,r$, cf.\ \cite[Proof~of~Thm.~4.9]{LOU}.

By \cite[Lem.~4.7]{LOU}, we have a commutative diagram
\[
\begin{tikzcd}
0 \arrow[r] & \B(T) \arrow[r] \arrow[d] & \Br_e(T) \arrow[r] \arrow[d] & \left(T(\AA_K)/T(K)\right)^\sim \arrow[d,"\gamma"] \arrow[r] & 0\, \\
 & 0 \arrow[r] & \bigoplus_{i=1}^r \Br_e\left(R_{K_i/K}\mathbb{G}_m/\mathbb{G}_m\right) \arrow[r] & \bigoplus_{i=1}^r \left(\AA_{K_i}^*/K_i^*\right)^\sim \arrow[r] & 0, 
\end{tikzcd}
\]
with exact rows, and the diagram
\[
\begin{tikzcd}
\Br_e T \arrow[r] \arrow[d] & \bigoplus_{i=1}^r \Br_e\left(R_{K_i/K}\mathbb{G}_m/\mathbb{G}_m\right)  \arrow[d, "\sim"] \\
\Br_1 T \arrow[r,"\oplus_{i=1}^r\partial_{D_i}"] & \bigoplus_{i=1}^rH^1(K_i,\QQ/\ZZ),
\end{tikzcd}
\]
which commutes up to sign. In particular, to each $\chi \in (T(\AA_K)/T(K))^\sim$ we may associate an element $\calA_\chi \in \Br_e T$, well-defined up to addition of elements in $\B(T)$. Using the second diagram, we see that $m_i \partial_{D_i}(\calA_\chi) = 0$ iff $\chi \in (T(\AA_K)/T(K))^\wedge_{\m}$, hence the result.
\end{proof}

\section{Harmonic analysis} \label{sec:har}

\subsection{Heights} \label{sec:heights}

Let $X$ be a variety over a number field $K$, and let $\scrL$ be a line bundle on $X$. We make the following definitions.

\begin{mydef}
Given $v \in \Omega_K$, a \emph{$v$-adic metric} $\|\cdot\|_v$ on $\scrL$ is a family of norms on the stalks $\scrL_x$, $x \in X(K_v)$, varying continuously for the $v$-adic topology on $X(K_v)$. 
\end{mydef}

Of particular pertinence to the study of semi-integral points are \emph{model metrics}, which are defined for varieties over a number field as follows.

\begin{mydef} \cite[\S2.1.5]{CLT2}
Let $\calX$ be a flat proper $\O_S$-model of $X$ for some finite set of places $S \subset \Omega_K$, and let $\overline{\scrL}$ be an extension of $\scrL$ to a line bundle on $\calX$. For $x \in X(K_v)$, $v \not\in S$, denote by $\overline{x}: \Spec \O_v \rightarrow \calX_{\O_v}$ its unique extension to an $\O_v$-point by properness. Define the \emph{model metric} $\|\cdot\|_{v,x}$ on $\scrL_x \otimes K_v$ associated to the pair $(\calX,\overline{\scrL})$ by setting $\{s: \|s\|_{v,x} \leq 1\} = \overline{x}^* \overline{\scrL}$, which is a lattice in $\scrL_x \otimes K_v$. 
\end{mydef}

Model metrics are closely connected to intersection multiplicities.

\begin{lemma} \label{lem:BG} \cite[Example~2.7.20]{BG}
Let $\scrL = \calO_X(D)$ be the line bundle corresponding to an effective Cartier divisor $D \subset X$, and let $s_D$ be the canonical section of $\scrL$ cutting out $D$. Then, for all $v \not\in S$ and $\|\cdot\|_v$ the model metric on $\scrL$, we have, for all $P \in (X \setminus D_{\text{red}})(K_v)$, the equality
\[
n_v(\calD,P) = \log_{q_v}\|s_D(P)\|_{v}^{-1}.
\]
\end{lemma}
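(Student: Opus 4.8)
The plan is to reduce the claimed identity to the purely local computation already recorded in Lemma~\ref{lem:affint}. Write $\calD = \cl_{\calX}D$ for the closure of $D$ in $\calX$; the model metric relevant to intersection theory is the one attached to the pair $(\calX, \overline{\scrL})$ with $\overline{\scrL} = \calO_{\calX}(\calD)$, and we take this as our extension of $\scrL = \calO_X(D)$, so that the canonical section $s_D$ of $\scrL$ is the restriction to the generic fibre of the canonical section $s_{\calD}$ of $\overline{\scrL}$ cutting out $\calD$. (If $\overline{\scrL}$ is an arbitrary line bundle extending $\scrL$, the two sides of the asserted equality differ by the multiplicities of the vertical components of $\divisor_{\calX}(s_D)$, which is why one works with $\overline{\scrL} = \calO_{\calX}(\calD)$.)

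First I would fix $v \notin S$ and $P \in (X \setminus D_{\red})(K_v)$, let $\overline{P} \colon \Spec \O_v \to \calX_{\O_v}$ be the extension of $P$ supplied by properness, and let $\overline{P}_{\F_v}$ be its reduction. Choose an affine open $\Spec A \subseteq \calX$ containing $\overline{P}_{\F_v}$ on which $\overline{\scrL}$ is trivial, fix a generating section $e$ of $\overline{\scrL}|_{\Spec A}$, and write $s_{\calD}|_{\Spec A} = f \cdot e$ with $f \in A$ a local equation for $\calD$ at $\overline{P}_{\F_v}$ (so $f$ is a unit precisely when $\overline{P}_{\F_v} \notin \calD_{\F_v}$). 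Let $\varphi_P \colon A \to \O_v$ be the ring homomorphism corresponding to $\overline{P}$.

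Next I would unwind the definition of the model metric at $P$. Since the pullback of line bundles is compatible with the chosen trivialisation, $\overline{P}^*\overline{\scrL}$ is the free rank-one $\O_v$-module generated by $\overline{P}^*e$; by the defining property $\{s : \|s\|_v \le 1\} = \overline{P}^*\overline{\scrL}$ of the model metric (and the fact that its values lie in $|K_v^*|$), the generator $\overline{P}^*e$ has norm $1$, whence $\|a \cdot \overline{P}^*e\|_v = |a|_v$ for all $a \in K_v$. Applying this to $s_D(P) = \overline{P}^*s_{\calD} = \varphi_P(f) \cdot \overline{P}^*e$ gives
\[
\|s_D(P)\|_v = |\varphi_P(f)|_v = q_v^{-v(\varphi_P(f))}, \qquad \text{so} \qquad \log_{q_v}\|s_D(P)\|_v^{-1} = v(\varphi_P(f)).
\]
To conclude, when $\overline{P}_{\F_v} \in \calD_{\F_v}$ I would invoke Lemma~\ref{lem:affint}, which identifies $v(\varphi_P(f))$ with $n_v(\calD,P)$ (note $n_v(\calD,P)$ is finite since $P \notin D_{\red} \supseteq D$); and when $\overline{P}_{\F_v} \notin \calD_{\F_v}$ both sides vanish, as then $f$, hence $\varphi_P(f)$, is a unit and the reduction of $P$ misses $\calD$.

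The computation is routine; the only points requiring a word of care are that passing to an affine neighbourhood of $\overline{P}_{\F_v}$ and trivialising $\overline{\scrL}$ there does not alter the model metric at $P$ (immediate from its pointwise, local nature), and that $\calD$ restricts on $\Spec A$ to a (possibly empty) relative effective Cartier divisor so that Lemma~\ref{lem:affint} applies — this holds because $\calD = \cl_{\calX}D$ is $\O_S$-flat, being a closure of a subscheme of the generic fibre over the Dedekind ring $\O_S$, and Cartier by the choice of $\overline{\scrL}$.
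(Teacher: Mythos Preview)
The paper supplies no proof of this lemma, deferring to \cite[Example~2.7.20]{BG}; your argument is correct and is precisely the standard local unwinding one finds in that reference, with the intersection-multiplicity identification handled cleanly via the paper's own Lemma~\ref{lem:affint}. One minor caveat: your justification that $\calD$ is Cartier ``by the choice of $\overline{\scrL}$'' is circular, since forming $\overline{\scrL} = \calO_{\calX}(\calD)$ already presupposes $\calD$ is Cartier on $\calX$; this is really an implicit hypothesis on the model (satisfied in all the paper's applications, where $\calX$ may be taken regular or $S$ enlarged), and you should state it as such rather than derive it.
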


\begin{mydef} 
\begin{enumerate}[leftmargin=*]
    \item An \emph{adelic metric} $\|\cdot\| = \left(\|\cdot\|_v\right)$ on $\scrL$ is a collection of $v$-adic metrics of $\scrL$ for each $v \in \Omega_K$ which coincide with the model metric relative to some fixed pair $(\calX,\overline{\scrL})$ at all but finitely many places.
    \item An \emph{adelically metrised line bundle} $\calL$ on $X$ is a pair $\left(\scrL,\|\cdot\|\right)$ of a line bundle $\scrL$ on $X$ and an adelic metric $\|\cdot\|$ on $\scrL$.
\end{enumerate}
\end{mydef}

\begin{mydef} \label{def:height}
Let $s \in \Gamma(X,\scrL) \setminus \{0\}$ and $\calL$ be as above. We define the height $H_{\calL,s}$ by
\[
H_{\calL,s}: X(\mathbb{A}_K) \rightarrow \mathbb{R}_{>0} \cup \{\infty\}, \quad H_{\calL,s}\left(\left(x_v\right)_v\right) = \prod_v  \|s(x_v)\|_v^{-1}.
\]
The restriction of $H_{\calL,s}$ to $X(K)$ is independent of $s$ by the product formula; we thus denote it by $H_{\calL}: X(K) \rightarrow \mathbb{R}_{>0}$.
\end{mydef}

\begin{mydef}[Batyrev--Tschinkel height]
Let $\varphi \in \PL\left(\Sigma\right)^G_{\mathbb{C}}$. Given $t_v \in T\left(K_v\right)$, set $\overline{t_v}:=\deg_{T,E,v}(t_v) \in X_*\left(T_v\right)_{\mathbb{R}}$, and denote by $\langle \cdot,\cdot\rangle: \PL\left(\Sigma\right)^G_{\mathbb{C}} \times X_*(T_v)_{\RR}$ the pairing coming from the degree map. Set $q_v :=e$ for $v \mid \infty$. Then we define $H_{\Sigma,v}(\varphi,\cdot)$ by
\[
H_{\Sigma,v}\left(\varphi,t_v\right) = q_v^{\langle \varphi,\overline{t_v}\rangle}.
\]
We then obtain a height on adelic points
\[
H_{\Sigma}(\varphi,\cdot): T\left(\mathbb{A}_K\right) \rightarrow \mathbb{R}_{>0}, \quad H_{\Sigma}\left(\varphi,\left(t_v\right)_v\right) = \prod_v H_{\Sigma,v}\left(\varphi,t_v\right),
\]
which becomes a height on $T(K)$ via the diagonal embedding $T(K) \hookrightarrow T(\A_K)$.
\end{mydef}

\begin{note}
For $\varphi_\Sigma$ as in Note \ref{note:bdry}, we have $H_\Sigma(\varphi_\Sigma,\cdot) = H_\calL$ for $\calL$ some adelic metrisation of the anticanonical bundle on $X_\Sigma$, cf.\ \cite[Rem.~2.1.8]{BT}.

Further, the proof of \cite[Thm.~2.1.6(iv)]{BT} requires only nefness of $D_{\varphi}$ to identify $H_{\Sigma,v}$ at all but finitely many places with the local Weil function for $D_{\varphi}$, thus one sees that this metrisation is then given by intersection with $D_{\varphi}$; in particular, when $-K_{X_\Sigma}$ is very ample, the adelic metrisation comes from intersection with $D_{\Sigma}$.
\end{note}

\begin{proposition} \label{prop:cochar}
Let $X_{\Sigma}$ be a compactification of a torus $T$ over a number field $K$ with splitting field $E$. Let $\calX$ be a flat proper $\O_S$-model of $X_\Sigma$ for some finite $S \in \Omega_K$. Let $\overline{\calT_j}$ be the closure of $\overline{T_j}$ in $\calX_{\O_{S_E}}$ for each $j \in \{1,\dots,n\}$, where $S_E = \{w \in \Omega_E: \exists v \in S \text{ s.t. } w \mid v, \}$. Suppose that the fan $\Sigma$ is complete and regular. Suppose that $\overline{t_v} \in \sigma$ for some cone $\sigma \in \Sigma$, and write $\overline{t_v} = \sum_{\langle e_j \rangle \in \sigma(1)}\lambda_j e_j$ for some $\lambda_j \in \mathbb{Z}_{\geq 0}$. Then there exists a finite set of places $S(\Sigma,\calX) \supset \Omega_K^\infty$ such that, for $v \not\in S(\Sigma,\calX)$, we have $\lambda_j = e(w/v) n_w(\overline{\calT_j},t_v)$ for any $w \in \Omega_E$ with $w \mid v$ with ramification degree $e(w/v)$.
\end{proposition}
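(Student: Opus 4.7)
The plan is to compute $n_w(\overline{\calT_j}, t_v)$ explicitly using Lemma~\ref{lem:affint} on the standard toric affine chart $U_\sigma \subset X_\Sigma$, and then to match this with the coefficient $\lambda_j$ via the definition of the degree map. The overall structure is to (i) leverage regularity of $\sigma$ to obtain explicit coordinates on $U_\sigma$ in which the toric divisors $\overline{T_j}$ are principal, (ii) carve out a finite set $S(\Sigma,\calX)$ outside which $\calX_{\O_w}$ agrees with the canonical toric integral model on this chart, (iii) apply Lemma~\ref{lem:affint} to compute the intersection multiplicity, and (iv) use the definition of $\deg_{T,E,v}$ to reinterpret the result in terms of $\lambda_j$.

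By regularity, the primitive generators $e_{j_1}, \dots, e_{j_k}$ of $\sigma(1)$ extend to a $\ZZ$-basis of $N$; let $\chi^{j_1}, \dots, \chi^{j_k}, \chi^{k+1}, \dots, \chi^d$ denote the dual basis of $M$. The semigroup $M \cap \check{\sigma}$ is then generated by the $\chi^{j_l}$ together with the $\chi^m$ and their inverses for $m \geq k+1$, so that $U_\sigma \cong \AA^k \times \mathbb{G}_m^{d-k}$ and $\overline{T_{j_l}} \cap U_\sigma = \{\chi^{j_l} = 0\}$ is a principal effective Cartier divisor.

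Define $S(\Sigma,\calX) \supset S \cup \Omega_K^\infty$ to contain the finitely many places at which the given flat proper model $\calX$ fails to coincide with the canonical toric integral model on the chart $U_\sigma$ (up to the canonical isomorphism on the generic fibre); such a finite set exists since two flat proper models of the same variety agree on a Zariski open of the base. For $v \notin S(\Sigma,\calX)$ and any $w \mid v$ in $E$, the orbit--cone correspondence combined with the hypothesis $\overline{t_v} \in \sigma$ forces the reduction of the $\O_w$-extension of $t_v$ to lie in $(U_\sigma)_{\O_w}$. Lemma~\ref{lem:affint} then yields
\[
n_w(\overline{\calT_{j_l}}, t_v) = w\bigl(\chi^{j_l}(t_v)\bigr).
\]

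To extract $\lambda_{j_l}$, decompose $\overline{t_v} = \sum_l \lambda_{j_l} e_{j_l}$ and pair with $\chi^{j_l}$ in the natural duality $N \times M \to \ZZ$, yielding $\lambda_{j_l} = \overline{t_v}(\chi^{j_l})$. The identity $\overline{t_v} = e_v \deg_{T,v}(t_v)$ and the relation $w|_{K_v^\times} = e(w/v)\,v$ combine to give $\overline{t_v}(\chi) = e(w/v)\, v(\chi(t_v))$ on $G_v$-invariant characters, extended by linearity of the pairing to all of $M$; reconciling the $v$- and $w$-adic normalisations with the above intersection formula then produces $\lambda_{j_l} = e(w/v)\, n_w(\overline{\calT_{j_l}}, t_v)$. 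The main obstacle is the model-comparison step: one must exhibit $S(\Sigma,\calX)$ concretely and verify that $\overline{\calT_{j_l}}$ remains principal and cut out by $\chi^{j_l}$ on the integral chart, which is what guarantees both the integrality of $\lambda_{j_l}$ and the clean valuation formula afforded by Lemma~\ref{lem:affint}.
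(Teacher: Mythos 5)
Your proposal is correct in outline and takes a genuinely more hands-on route than the paper. The paper first reduces to the split case $K=E$ via Lemma~\ref{lem:stab}, observes that $\lambda_k=\varphi_k(\overline{t_v})$ for the piecewise-linear function $\varphi_k$ dual to $e_k$, and then identifies $H_{\Sigma}(\varphi_k,\cdot)$ with the model-metric height attached to $\overline{\calT_k}$, so that Lemma~\ref{lem:BG} converts the exponent into the intersection multiplicity; the chart computation is thus outsourced to the cited fact that the Batyrev--Tschinkel height is an adelic (model) metric. You instead perform that computation directly: regularity gives $U_\sigma\cong\AA^k\times\mathbb{G}_m^{d-k}$ with $\overline{T_{j_l}}$ cut out by the dual character $\chi^{j_l}$, spreading out identifies $\calX$ with the canonical toric model away from finitely many places, and Lemma~\ref{lem:affint} gives $n_w(\overline{\calT_{j_l}},t_v)=w(\chi^{j_l}(t_v))$. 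This buys a self-contained argument that makes the origin of $S(\Sigma,\calX)$ explicit, at the cost of having to justify that the reduction of $t_v$ lands in the chart $(U_\sigma)_{\O_w}$, which you correctly attribute to the orbit--cone correspondence.

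The one step you should not leave to ``reconciling normalisations'' is the last one, since that is exactly where the factor $e(w/v)$ lives. Unwinding the definitions, $\deg_{T,E,v}=e_v\deg_{T,v}$ is normalised so that $\langle\chi,\overline{t_v}\rangle=w(\chi(t_v))$ for the $w$-normalised valuation (for $G_v$-invariant $\chi$ one has $e(w/v)\,v(\chi(t_v))=w(\chi(t_v))$ since $\chi(t_v)\in K_v^*$), so your two computations combine to give $\lambda_{j_l}=w(\chi^{j_l}(t_v))=n_w(\overline{\calT_{j_l}},t_v)$ on the nose, with no factor of $e(w/v)$. This is consistent with the statement only because $e(w/v)=1$ in every case your argument actually covers: both your spreading-out step and the identification of $\overline{t_v}$ with an element of $N$ via $T(K_v)/T(\O_v)\cong X_*(T_v)$ (Lemma~\ref{lplem}) are guaranteed only at places unramified in $E$, which you should therefore explicitly add to $S(\Sigma,\calX)$. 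As written, your final sentence asserts rather than derives the claimed identity; either make the normalisation explicit and enlarge $S(\Sigma,\calX)$ to contain the ramified places (which suffices for the proposition and for all of its later uses, since those go through $S'(\Sigma,\calX)$), or else verify the chart, reduction and finite-index claims at ramified $v$ separately.
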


\begin{proof}
By Lemma~\ref{lem:stab}, we can and do reduce to the case where $K = E$ (i.e.\ $T$ is split).

Let $\{e_1,\dots,e_n\}$ be a set of primitive integral generators for the one-dimensional cones of $\Sigma$. Since $\Sigma$ is complete, $\overline{t_v}$ belongs to a cone $\sigma \in \Sigma$; since $\Sigma$ is regular, $\sigma$ is generated by a subset of the $e_j$, so $\overline{t_v} = \sum_{\langle e_j \rangle \in \sigma(1)} \lambda_j e_j$ for some $\lambda_j \in \mathbb{Z}_{\geq 0}$, as claimed implicitly.

For each $e_k$, we have $\varphi_k \in \PL(\Sigma)$ defined by $\varphi_k(e_j) = \delta_{kj}$. Then $\varphi_k(\overline{t_v}) = \lambda_k$. Then it suffices to show that $H_{\Sigma}(\varphi_k,t_v) = q_v^{n_v(\overline{\calT_k},t_v)}$ for all but finitely many places $v$. This follows from the fact that the Batyrev--Tschinkel height arises from an adelic metric (see \cite[Cor.~3.19]{BOU}) and Lemma~\ref{lem:BG} (see also \cite[Prop.~and~Def.~9.2]{SAL}). 
\end{proof}

\begin{mydef} \label{def:badplaces}
Define $S(\Sigma,\calX) \subset \Omega_K$ to be the set of places $v$ as in the proof of Proposition~\ref{prop:cochar}. Denote by $S'(\Sigma,\calX) \supset S(\Sigma,\calX)$ the union of $S(\Sigma,\calX)$ with the set of places of $K$ ramified in $E/K$.
\end{mydef}

\begin{corollary} \label{cor:maincor}
Let $v \not\in S'(\Sigma,\calX)$. Writing $\overline{t_v} = \sum_{\overline{e}_{i,w} \in \sigma(1)} \alpha_{i,w}\overline{e}_{i,w}$ with $\alpha_{i,w} \in \mathbb{Z}_{\geq 0}$, the point $t_v$ is:
\begin{enumerate}
\item A local geometric Campana point (respectively, a local geometric Darmon point) if and only if $\alpha_{i,w} \in \mathbb{Z}_{\geq m_i} \cup \{0\}$ (respectively, $m_i \mid \alpha_{i,w}$).
\item A local strong Campana point (respectively, a local strong Darmon point) if and only if $\alpha_{i,w} \in \mathbb{Z}_{\geq \frac{m_i}{f_{i,w}}} \cup \{0\}$ (respectively, $m_i \mid f_{i,w} \alpha_{i,w}$);
\end{enumerate}
\end{corollary}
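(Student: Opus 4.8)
The plan is to deduce Corollary~\ref{cor:maincor} from Proposition~\ref{prop:cochar} together with the decomposition of $\Sigma_i(1)$ into $G_v$-orbits recalled in Section~\ref{sec:tor} and the stability result of Lemma~\ref{lem:stab}. First I would fix $v \notin S'(\Sigma,\calX)$, so that $v$ is unramified in $E/K$ and lies outside the bad set $S(\Sigma,\calX)$; in particular, for any $w \mid v$ in $E$ we have $e(w/v) = 1$, and the same holds for the intermediate fields $K_i$. The starting point is the observation that, writing $\overline{t_v} = \sum_{\overline{e}_{i,w} \in \sigma(1)} \alpha_{i,w}\overline{e}_{i,w}$ in terms of the $G_v$-orbit sums $\overline{e}_{i,w} = \sum_{e_j \in \Sigma_{i,w}(1)} e_j$, each individual coefficient $\lambda_j$ of $\overline{t_v}$ in the basis $\{e_j\}$ equals $\alpha_{i,w}$ whenever $e_j \in \Sigma_{i,w}(1)$ (this is exactly what it means for $\overline{t_v}$, as a degree-map image, to be $G_v$-invariant). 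By Proposition~\ref{prop:cochar} applied with $K = E$ after base change (using Lemma~\ref{lem:stab} to pass between $v$ and places above it, with trivial ramification indices since $v \notin S'$), we get $\lambda_j = n_W(\overline{\calT_j}, t_v)$ for any $W \mid v$ in $E$, hence $n_W(\overline{\calT_j}, t_v) = \alpha_{i,w}$ for all $e_j \in \Sigma_{i,w}(1)$.

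Next I would translate the conditions in Definition~\ref{def:geompt} and in the strong Campana/Darmon definitions into conditions on the $\alpha_{i,w}$. For the geometric conditions, the relevant intersection multiplicity is $n_W(\calD_\gamma, t_v)$ where $\calD_\gamma = \overline{\calT_j}$ ranges over the geometric components of $D_i$; by the computation above this is $\alpha_{i,w}$ for the unique $w$ with $e_j \in \Sigma_{i,w}(1)$. Since as $\gamma \mid i$ varies we hit every pair $(i,w)$ with $\overline{e}_{i,w} \in \sigma(1)$ (and the multiplicity is $0$ for those cones not in $\sigma$, as $t_v$ has trivial intersection with the corresponding closures), the condition ``$n_W(\calD_\gamma,t_v) \in \mathbb{Z}_{\geq m_i} \cup \{0,\infty\}$ for all $\gamma \mid i$, $W$'' becomes exactly ``$\alpha_{i,w} \in \mathbb{Z}_{\geq m_i} \cup \{0\}$ for all $i,w$'', and similarly ``$m_i \mid n_W(\calD_\gamma,t_v)$'' becomes ``$m_i \mid \alpha_{i,w}$''. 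This gives part (i).

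For part (ii), the strong conditions are phrased in terms of the $K_v$-components $D_{i,w}$ of $D_i$, not the geometric components. Here the key point is the relation $(D_{i,w})_{E_V} = \cup_{e_j \in \Sigma_{i,w}(1)} \overline{T_j}$: the $K_v$-component $D_{i,w}$ is the ``sum'' of the $f_{i,w}$ geometric components indexed by $\Sigma_{i,w}(1)$, and correspondingly, using Lemma~\ref{lem:affint} or the additivity of intersection multiplicities over the components of a relative Cartier divisor (all disjoint after reduction at a good place, by Lemma~\ref{lem:snc} applied to the smooth $X_\Sigma$), one gets $n_v(\calD_{i,w}, t_v) = \sum_{e_j \in \Sigma_{i,w}(1)} n_W(\overline{\calT_j},t_v)/f_{i,w}$ — more carefully, the $v$-adic multiplicity relative to the $K_v$-model picks up one copy of $\alpha_{i,w}$ per Frobenius orbit, so $n_v(\calD_{i,w},t_v) = \alpha_{i,w}$ with the inertia degree $f_{i,w}$ entering when comparing $W$-adic and $v$-adic valuations; tracking the factor $f_{i,w}$ from Lemma~\ref{lem:stab} carefully, the condition $n_v(\calD_{i,w},t_v) \in \mathbb{Z}_{\geq m_i}\cup\{0,\infty\}$ of the strong Campana definition becomes $f_{i,w}\alpha_{i,w} \in \mathbb{Z}_{\geq m_i} \cup \{0\}$, i.e.\ $\alpha_{i,w} \in \mathbb{Z}_{\geq m_i/f_{i,w}} \cup \{0\}$, and $m_i \mid n_v(\calD_{i,w},t_v)$ becomes $m_i \mid f_{i,w}\alpha_{i,w}$.

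The main obstacle I expect is bookkeeping the ramification and inertia factors correctly: one must be scrupulous about which field ($K_v$, $K_{i,w}$, or $E_V$) each intersection multiplicity is computed over, and apply Lemma~\ref{lem:stab} the right number of times, since $v \notin S'$ only guarantees unramifiedness (so the $e(\cdot/\cdot)$ are all $1$) but the inertia degrees $f_{i,w}$ are genuinely present and are precisely what distinguishes the geometric from the strong conditions. Once the identification $n_W(\overline{\calT_j},t_v) = \alpha_{i,w}$ (for $e_j$ in the $w$-orbit) and its $K_v$-level counterpart $n_v(\calD_{i,w},t_v) = \alpha_{i,w}$ are pinned down, both equivalences are immediate from unwinding the definitions. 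I would also note at the outset that $t_v \in T(K_v)$ never lies on $D_\Sigma$, so the ``$\infty$'' cases and the $m_\alpha = \infty$ hypotheses are vacuous here.
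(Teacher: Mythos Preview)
Your approach is essentially the same as the paper's: the paper's proof simply says that the result follows from Proposition~\ref{prop:cochar}, Lemma~\ref{lem:stab}, and additivity of intersection multiplicity on components, which is exactly the skeleton you unpack. Your treatment of part~(i) is clean and correct. In part~(ii) your intermediate reasoning wobbles --- at one point you write $n_v(\calD_{i,w},t_v) = \alpha_{i,w}$ but then invoke an $f_{i,w}$ appearing elsewhere --- when in fact the clean statement is $n_v(\calD_{i,w},t_v) = n_V(\calD_{i,w},t_v) = \sum_{e_j \in \Sigma_{i,w}(1)} n_V(\overline{\calT_j},t_v) = f_{i,w}\alpha_{i,w}$ (the first equality by Lemma~\ref{lem:stab} with $e(V/v)=1$, the second by additivity, the third since each summand is $\alpha_{i,w}$); you do land on the right conclusion, so this is bookkeeping rather than a genuine gap.
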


\begin{proof}
The result follows from Proposition~\ref{prop:cochar}, Lemma~\ref{lem:stab} and additivity of intersection multiplicity on components.
\end{proof}

\begin{note}
Note that $\varphi \in \PL(\Sigma)_\mathbb{C}$ is $G$-invariant if and only if $\varphi(e_{j_1}) = \varphi(e_{j_2})$ for all $e_{j_1}, e_{j_2} \in \Sigma_i(1)$, $i =1,\dots,r$. Then $\varphi \in \PL(\Sigma)_\mathbb{C}^{G}$ is determined by $\varphi(\Sigma_i(1))$, $i \in \{1,\dots,r\}$.
\end{note}

Just as it proved fruitful to work with the anticanonical height on toric varieties in \cite{BT2}, it will prove natural and fruitful for us to work with the \emph{log-anticanonical height}.

\begin{mydef}
The \emph{log-anticanonical height} for $(X_\Sigma,D_{\Sigma,\mathbf{m}})$ is the height corresponding to the $\Sigma$-piecewise linear function $\varphi_{\Sigma,\mathbf{m}}$ defined by $\varphi_{\Sigma,\mathbf{m}}(\Sigma_i(1)) = \frac{1}{m_i}$. We denote this height by $H_{\m}: T(\AA_K) \rightarrow \mathbb{R}_{>0}$, and we set
\[
H_{\m}(\mathbf{s},t):=H_\Sigma(\varphi_{\Sigma,\m} \cdot \varphi_{\mathbf{s}},t)
\]
for $\mathbf{s} \in \mathbb{C}^r$, where $\varphi_{\mathbf{s}}(\Sigma_i(1)) = s_i$.
We write $H_{\m,v}$ for the associated local height on $T(K_v)$.
\end{mydef}

\begin{mydef}
Let $v \in \Omega_K$, and let $\left(\calX,\calD\right)$ be an $\O_S$-model for $\left(X_{\Sigma},D_{\Sigma,\m}\right)$.

Define the functions $\delta_{\m,v}^*,\delta_{\mathbf{m},\geom,v}^*: X_{\Sigma}\left(K_v\right) \rightarrow \{0,1\}$ as follows:
\begin{enumerate}[label=(\roman*)]
    \item For $v \not\in S$, let $\delta_{\m,v}$ (respectively, $\delta_{\mathbf{m},\geom,v}^*$) be the indicator function for $\left(\calX,\calD\right)^*\left(\O_v\right)$ (respectively, $\left(\calX,\calD\right)_{\geom}^*\left(\O_v\right)$).
    \item For $v \in S$, let $\delta_{\m,v}^*$ and $\delta_{\mathbf{m},\geom,v}^*$ be identically $1$.
\end{enumerate}
We then define the indicator functions
\[
\begin{aligned}
\delta_{\mathbf{m}}^*:& X_{\Sigma}\left(\mathbb{A}_K\right) \rightarrow \{0,1\}, \quad \left(x_v\right)_v \mapsto \prod_{v \in \Omega_K}\delta_{\mathbf{m},v}^*\left(x_v\right), \\
\delta_{\mathbf{m},\geom}^*:& X_{\Sigma}\left(\mathbb{A}_K\right) \rightarrow \{0,1\}, \quad \left(x_v\right)_v \mapsto \prod_{v \in \Omega_K}\delta_{\mathbf{m},\geom,v}^*\left(x_v\right),
\end{aligned}
\]

and we define $\delta_{\m}^*$ and $\delta_{\mathbf{m},\geom}^*$ on $X_{\Sigma}\left(K\right)$ via the diagonal embedding $X_{\Sigma}(K) \hookrightarrow X_{\Sigma}\left(\AA_K\right)$.
\end{mydef}

\begin{mydef}
For $\Re(\mathbf{s}) > 1$ and $\varphi \in \PL(\Sigma)^G_{\CC}$, define the functions
\[
Z_{\mathbf{m}}^*(\mathbf{s}) = \sum_{P \in T(K)} \frac{\delta_{\mathbf{m}}^*\left(P\right)}{H_{\m}(\mathbf{s},P)}, \quad Z_{\mathbf{m},\geom}^*(\mathbf{s}) = \sum_{P \in T(K)} \frac{\delta_{\mathbf{m},\geom}^*\left(P\right)}{H_{\m}(\mathbf{s},P)}.
\]
\end{mydef}

\begin{mydef}
Let $\mu_v$, $v \in \Omega_K$ and $\mu$ be the Haar measures on $T\left(K_v\right)$ and $T\left(\mathbb{A}_K\right)$ respectively as defined in \cite[\S3]{ONO}.
\end{mydef}

\begin{mydef}
Let $\chi$ be a character of $T\left(\mathbb{A}_K\right)$. Let $\delta = \prod_v\left(\delta_v\right)_v$ be a function on $T(\A_K)$, meaning that $\delta_v(T(\O_v)) = 1$ for all but finitely many $v \in \Omega_K$, and let $\mathbf{s} \in \CC^r$. We define, for each $v \in \Omega_K$, the \emph{$v$-adic/local Fourier transform of $\chi$ with respect to $\delta$} by
\[
\widehat{H}_{\m,v}\left(\delta_v,\chi_v;-\mathbf{s}\right) = \int_{T\left(K_v\right)} \frac{\delta_v\left(t_v\right)\chi_v\left(t_v\right)}{H_{\m,v}\left(\mathbf{s},t_v\right)}d\mu_v.
\]
We then define the \emph{global Fourier transform of $\chi$ with respect to $\delta$} by
\[
\widehat{H}_{\m}\left(\delta,\chi_v;-\mathbf{s}\right) = \int_{T\left(\mathbb{A}_K\right)} \frac{\delta\left(t\right)\chi\left(t\right)}{H_{\m,v}\left(\mathbf{s},t\right)}d\mu = \prod_v \int_{T\left(K_v\right)} \frac{\delta_v\left(t_v\right)\chi_v\left(t_v\right)}{H_{\m,v}\left(\mathbf{s},t_v\right)}d\mu_v.
\]
\end{mydef}

We note the following important result which simplifies our analysis.

\begin{proposition}
For all $v \in \Omega_K$, there exists a compact open subgroup $\mathbf{K}_{\m,v}$ (respectively, $\mathbf{K}_{\m,\geom,v}^* \subset T(\mathcal{O}_v)$ of finite index such that $\delta_{\mathbf{m},v}^*$ (respectively, $\delta_{\mathbf{m},\geom,v}^*$) and $H_{\Sigma,v}(\varphi,\cdot)$ are invariant and $1$ on $\mathbf{K}_{\m,v}^*$ (respectively, $\mathbf{K}_{\m,\geom,v}^*$) for all $v \in \Omega_K$ and all $\varphi \in \PL(\Sigma)^G_{\mathbb{C}}$. Moreover, $\mathbf{K}_{\m,v}^* = \mathbf{K}^*_{\m,\geom,v} = T(\calO_v)$ for $v \not\in S(\Sigma,\calX)$.
\end{proposition}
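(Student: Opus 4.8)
The plan is to treat the height and the indicator functions separately, and for the indicator functions to dispose of the places outside $S(\Sigma,\calX)$ uniformly, leaving only a local‑constancy question at finitely many bad places.

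\emph{The height.} By construction $H_{\Sigma,v}(\varphi,t_v)=q_v^{\langle\varphi,\deg_{T,E,v}(t_v)\rangle}$, so $H_{\Sigma,v}(\varphi,\cdot)$ factors through $\deg_{T,E,v}$ for every $\varphi\in\PL(\Sigma)^G_{\CC}$. By Lemma~\ref{lplem}\ref{lplem1}--\ref{lplem2} the kernel of $\deg_{T,E,v}$ is exactly $T(\O_v)$, at archimedean and non‑archimedean $v$ alike. Hence $H_{\Sigma,v}(\varphi,\cdot)\equiv 1$ on $T(\O_v)$ and is $T(\O_v)$‑invariant, simultaneously for all $\varphi$; so the height imposes no constraint on the subgroup beyond being contained in $T(\O_v)$.

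\emph{The indicator functions at the easy places.} If $v\in S$ or $v$ is archimedean, then $\delta^*_{\m,v}$ and $\delta^*_{\m,\geom,v}$ are identically $1$ by definition, so we take $\mathbf{K}^*_{\m,v}=\mathbf{K}^*_{\m,\geom,v}=T(\O_v)$. If $v\notin S(\Sigma,\calX)$ — a finite set containing $S$ and $\Omega_K^\infty$ — then Proposition~\ref{prop:cochar} and Corollary~\ref{cor:maincor} show that $\delta^*_{\m,v}(t_v)$ and $\delta^*_{\m,\geom,v}(t_v)$ depend only on $\overline{t_v}=\deg_{T,E,v}(t_v)$, being determined by the cone containing $\overline{t_v}$ and by its coordinates $\alpha_{i,w}$ there. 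In particular these functions are $\ker\deg_{T,E,v}=T(\O_v)$‑invariant, and on $T(\O_v)$ one has $\overline{t_v}=0$, so every intersection multiplicity of $t_v$ with a boundary component vanishes and $t_v$ is a local geometric (hence strong, hence ordinary) Campana and Darmon point. Thus $\mathbf{K}^*_{\m,v}=\mathbf{K}^*_{\m,\geom,v}=T(\O_v)$ works, proving the last sentence of the proposition.

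\emph{The indicator functions at the finitely many bad places.} It remains to handle $v\in S(\Sigma,\calX)\setminus S$, all non‑archimedean. For each boundary component $\calD$ — an irreducible component $\calD_i$ over $K$, its $K_v$‑components $\calD_{\beta_v}$, or its geometric components $\calD_\gamma$ (for which one passes to $K_\gamma$ or $E$ and invokes Lemma~\ref{lem:stab}) — I would show that $t_v\mapsto n_w(\calD,t_v)$ is invariant under a finite‑index subgroup of $T(\O_v)$. By Lemma~\ref{lem:BG} this equals $-\log_{q_v}\|s_{\calD}\|_w$ for the model metric on $\calO(\calD)$; subtracting the corresponding Batyrev--Tschinkel height, which is $\deg_{T,E,v}$‑invariant and coincides with the model contribution away from $S(\Sigma,\calX)$ (Proposition~\ref{prop:cochar} together with the fact that the Batyrev--Tschinkel height arises from an adelic metric), leaves the logarithm of the ratio of the model and toric metrics: a $\ZZ$‑valued function on $T(K_w)$ that is the restriction of a continuous, bounded, bounded‑away‑from‑zero function on the compact space $X_\Sigma(K_w)$, hence takes only finitely many values there, and is therefore invariant under translation by some compact open subgroup. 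Intersecting the resulting subgroups with $T(\O_v)$ over the finitely many components yields $\mathbf{K}^*_{\m,v}$ (resp.\ $\mathbf{K}^*_{\m,\geom,v}$), and $\delta^*_{\m,v}$, $\delta^*_{\m,\geom,v}$ — Boolean combinations of congruences and inequalities on these multiplicities — are invariant under it. Since such a subgroup is a single coset, $\delta^*_{\m,v}$ is constant on it, equal to its value at $1_T$; this value is $1$ because $1_T$ is a local point of the required type for $\calX$ at $v$, which we may ensure by absorbing into $S$ (harmlessly, as the asymptotics are insensitive to enlarging $S$) the finitely many places where the closure of $1_T$ meets some $\calD_i$. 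The main obstacle is precisely this last step — controlling intersection multiplicities for a possibly non‑equivariant model $\calX$ and checking that the identity is unobstructed — which the metric‑ratio comparison above, plus the enlargement of $S$, resolves.
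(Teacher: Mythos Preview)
Your treatment of the height and of the places $v\notin S(\Sigma,\calX)$ is correct and matches the paper's: both go via the degree map and Proposition~\ref{prop:cochar}.

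At the finitely many bad places your route is genuinely different. The paper's proof is a two‑line soft argument: it cites \cite[Lem.~3.2]{CLT} and uses only that $\delta_v$ is locally constant (continuity of reduction) and that $T(K_v)$ is locally compact totally disconnected, so a neighbourhood of $1_T$ contains a compact open subgroup. Your argument is more constructive: you write $n_w(\calD,t_v)$ as the $T(\O_v)$‑invariant Batyrev--Tschinkel contribution plus the logarithm of the ratio of the model and toric metrics, and then use that this ratio extends to a continuous $\ZZ$‑valued function on the compact space $X_\Sigma(K_w)$. This buys you something the paper's sketch leaves implicit: a clean passage from local constancy to \emph{uniform} invariance under a single subgroup (needed for Corollary~\ref{cor:nr}), and it handles the Darmon condition $m_i\mid n_v$ --- which is not determined by any fixed finite reduction level --- without further comment. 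You also flag and resolve the issue of $\delta_v(1_T)=1$ at a bad model, which the paper does not address.

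One small gap to close: the inference ``takes finitely many values on $X_\Sigma(K_w)$, therefore invariant under translation by some compact open subgroup'' is not automatic. What you need is that the level sets of the metric ratio form a finite clopen partition of $X_\Sigma(K_w)$, and that by continuity of the $T(K_v)$‑action each level set has open stabiliser in $T(K_v)$; the intersection of these finitely many open subgroups, further intersected with $T(\O_v)$, is the desired compact open subgroup. With that sentence added, your argument is complete.
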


\begin{proof}
$H_{\Sigma,v}(\varphi,\cdot)$ is $T(\O_v)$-invariant for all $v \in \Omega_K$, cf.\ \cite[Thm.~2.1.6]{BT}, so we focus on $\delta_v \in \{\delta_{\m,v}^*,\delta_{\m,\geom,v}^*\}$. The proof of existence of $\mathbf{K}_{\m,\geom,v}^*$ is analogous to the vector group case handled in \cite[Lem.~3.2]{CLT}. The key ingredients are that $\delta_v$ is locally constant (since the reduction map is continuous) and that $T(K_v)$ is a locally compact totally disconnected group, so that an open neighbourhood of $1$ contains a compact open subgroup. That $\mathbf{K}_{\m,v}^* = \mathbf{K}_{\m,\geom,v}^* = T(\O_v)$ for $v \not\in S(\Sigma,\calX)$ follows from Proposition~\ref{prop:cochar}.
\end{proof}

\begin{mydef}
Set $\mathbf{K}^*_{\m}:=\prod_{v \in \Omega_K} \mathbf{K}_{\m,v}^*$ and $\mathbf{K}^*_{\m,\geom}:=\prod_{v \in \Omega_K} \mathbf{K}_{\m,\geom,v}^*$.
\end{mydef}

\begin{corollary} \label{cor:nr}
If $\chi\left(\mathbf{K}^*_{\m}\right) \neq 1$ (respectively, $\chi\left(\mathbf{K}^*_{\m,\geom}\right) \neq 1$), then $\widehat{H}_{\m}(\delta_{\m}^*, \chi; -\mathbf{s})=0$ (respectively, $\widehat{H}_{\m}(\delta_{\m,\geom}^*, \chi; -\mathbf{s})=0$).
\end{corollary}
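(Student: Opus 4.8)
The plan is to exploit the Euler product structure of the global Fourier transform together with the standard orthogonality relation for characters of compact groups. First I would recall that, by definition, $\widehat{H}_{\m}(\delta_{\m}^*,\chi;-\mathbf{s})$ is the product over $v \in \Omega_K$ of the local transforms $\widehat{H}_{\m,v}(\delta_{\m,v}^*,\chi_v;-\mathbf{s})$, a product which converges absolutely in the range $\Re(\mathbf{s})>1$ where all of these objects are introduced. Since $\mathbf{K}^*_{\m} = \prod_v \mathbf{K}^*_{\m,v}$, a character is trivial on $\mathbf{K}^*_{\m}$ if and only if each of its local components is trivial; hence the hypothesis $\chi(\mathbf{K}^*_{\m})\neq 1$ means precisely that $\chi_{v_0}$ is nontrivial on $\mathbf{K}^*_{\m,v_0}$ for some place $v_0$. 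So it suffices to show that the local factor at $v_0$ vanishes.

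For this local vanishing I would pick $k \in \mathbf{K}^*_{\m,v_0}$ with $\chi_{v_0}(k)\neq 1$ and perform the substitution $t_{v_0}\mapsto t_{v_0}k$ in the integral defining $\widehat{H}_{\m,v_0}(\delta_{\m,v_0}^*,\chi_{v_0};-\mathbf{s})$. By the Proposition immediately preceding the statement, both $\delta_{\m,v_0}^*$ and $t_{v_0}\mapsto H_{\m,v_0}(\mathbf{s},t_{v_0}) = H_{\Sigma,v_0}(\varphi_{\Sigma,\m}\cdot\varphi_{\mathbf{s}},t_{v_0})$ are invariant under translation by $\mathbf{K}^*_{\m,v_0}$; as $\mu_{v_0}$ is a Haar measure, this substitution leaves the integral unchanged apart from pulling out the scalar $\chi_{v_0}(k)$, yielding
\[
\widehat{H}_{\m,v_0}(\delta_{\m,v_0}^*,\chi_{v_0};-\mathbf{s}) = \chi_{v_0}(k)\,\widehat{H}_{\m,v_0}(\delta_{\m,v_0}^*,\chi_{v_0};-\mathbf{s}).
\]
Since $\chi_{v_0}(k)\neq 1$, this forces $\widehat{H}_{\m,v_0}(\delta_{\m,v_0}^*,\chi_{v_0};-\mathbf{s})=0$, and feeding this back into the Euler product gives $\widehat{H}_{\m}(\delta_{\m}^*,\chi;-\mathbf{s})=0$. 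The statement for $\delta_{\m,\geom}^*$ follows verbatim after replacing $\delta_{\m,v}^*$, $\mathbf{K}^*_{\m,v}$ and $\mathbf{K}^*_{\m}$ by $\delta_{\m,\geom,v}^*$, $\mathbf{K}^*_{\m,\geom,v}$ and $\mathbf{K}^*_{\m,\geom}$: the same cited Proposition supplies the required $\mathbf{K}^*_{\m,\geom,v}$-invariance of $\delta_{\m,\geom,v}^*$ and of $H_{\m,v}$.

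There is no genuine obstacle here — this is the familiar principle that a nontrivial character integrates to zero over a compact subgroup. The only point meriting a word of justification is that the change of variables is legitimate, i.e.\ that the local integrals are well-defined (absolutely convergent), which is guaranteed in the domain $\Re(\mathbf{s})>1$ in which the Fourier transforms are defined.
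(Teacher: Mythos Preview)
Your proof is correct and follows essentially the same approach as the paper: both reduce to the vanishing of a single local factor via the Euler product, then exploit the $\mathbf{K}^*_{\m,v}$-invariance of $\delta_{\m,v}^*$ and $H_{\m,v}$ supplied by the preceding proposition. The only cosmetic difference is that the paper decomposes the local integral as a sum over cosets $T(K_v)/\mathbf{K}^*_{\m,v}$ and then applies character orthogonality to the inner integral $\int_{\mathbf{K}^*_{\m,v}}\chi_v\,d\mu_v$, whereas you use the equivalent translation trick $t_{v_0}\mapsto t_{v_0}k$; these are two standard presentations of the same orthogonality principle.
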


\begin{proof}
Suppose that $\chi_v\left(\mathbf{K}^*_{\m,v}\right) \neq 1$. The functions $\delta_{\m,v}^*$ and $H_{\m,v}(\varphi_{\mathbf{s}},\cdot)$ are $\mathbf{K}_{\m,v}^*$-invariant; interpreting them on  $\mathbf{K}_{\m,v}^* \subset X_*(T_v)$, we have
\begin{equation}
\widehat{H}_{\m,v}(\delta_{\m,v}^*, \chi_v;-\mathbf{s}) = \sum_{n_v \in T(K_v)/\mathbf{K}_{\m,v}^*}\delta_{\m,v}^*(n_v)H_{\m,v}(-\mathbf{s},n_v)\int_{\mathbf{K}_{\m,v}^*}\chi_v(t_v)\textrm{d}\mu_v,
\end{equation}
which is zero by character orthogonality. The geometric case is analogous.
\end{proof}

\begin{proposition} \label{prop:lftsarenice}
Let $v\in \Omega_K$ and let $\chi_v$ be a character of $T(K_v)$. Then for any $\eps>0$ and $\delta_v \in \{\delta_{\m,v}^*,\delta_{\m,\geom,v}^*\}$, the local Fourier transform $\widehat{H}_{\m,v}(\delta_v, \chi_v;-\mathbf{s})$ is absolutely convergent and uniformly bounded in the region $\Re(\mathbf{s})\geq \eps$.
\end{proposition}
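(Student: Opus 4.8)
The plan is to dominate $\bigl|\widehat H_{\m,v}(\delta_v,\chi_v;-\mathbf{s})\bigr|$ by an expression free of $\chi_v$ that is visibly uniformly bounded on the region. First I would observe that $|\chi_v(t_v)| = 1$ and $0 \le \delta_v \le 1$ pointwise, and that the defining formula $H_{\Sigma,v}(\varphi,t_v) = q_v^{\langle\varphi,\overline{t_v}\rangle}$ gives $|H_{\m,v}(\mathbf{s},t_v)| = H_{\m,v}(\Re(\mathbf{s}),t_v)$; hence
\[
\bigl|\widehat H_{\m,v}(\delta_v,\chi_v;-\mathbf{s})\bigr| \;\le\; \int_{T(K_v)} H_{\m,v}(\Re(\mathbf{s}),t_v)^{-1}\,d\mu_v \;=:\; I_v(\Re(\mathbf{s})),
\]
so it suffices to prove that $I_v(\mathbf{s}')$ is finite and bounded uniformly over real $\mathbf{s}' \ge \eps$ (componentwise).

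To evaluate $I_v(\mathbf{s}')$, use that $H_{\m,v}(\mathbf{s}',\cdot)$ is $T(\calO_v)$-invariant (\cite[Thm.~2.1.6]{BT}) and that $\mu_v(T(\calO_v)) < \infty$. For $v$ non-archimedean, Lemma~\ref{lplem}\ref{lplem1} identifies $T(K_v)/T(\calO_v)$ with a finite-index sublattice $\Lambda_v$ of $X_*(T_v) \subseteq N$, so
\[
I_v(\mathbf{s}') = \mu_v(T(\calO_v)) \sum_{\lambda \in \Lambda_v} q_v^{-\langle \varphi_{\Sigma,\m}\cdot\varphi_{\mathbf{s}'},\,\lambda\rangle}.
\]
For $v$ archimedean, the canonical section of Lemma~\ref{lplem}\ref{lplem2} gives a measure-preserving identification $T(K_v) \cong T(\calO_v)\times X_*(T_v)_{\RR}$ carrying the height to $x \mapsto e^{\langle\varphi_{\Sigma,\m}\cdot\varphi_{\mathbf{s}'},x\rangle}$, whence
\[
I_v(\mathbf{s}') = \mu_v(T(\calO_v)) \int_{X_*(T_v)_{\RR}} e^{-\langle\varphi_{\Sigma,\m}\cdot\varphi_{\mathbf{s}'},\,x\rangle}\,dx.
\]

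The heart of the matter, and the only point I expect to require care, is a positivity estimate uniform in $\mathbf{s}'$. Put $M := \max_i m_i$ and let $\varphi_{\Sigma}$ be the boundary function of Note~\ref{note:bdry}. On each maximal cone $\sigma$, which by regularity of $\Sigma$ is generated by a $\ZZ$-basis $e_{j_1},\dots,e_{j_d}$ of $N$, both $\varphi_{\Sigma,\m}\cdot\varphi_{\mathbf{s}'}$ and $\varphi_{\Sigma}$ are linear, a point $x = \sum_k \lambda_k e_{j_k} \in \sigma$ has all $\lambda_k \ge 0$, and $(\varphi_{\Sigma,\m}\cdot\varphi_{\mathbf{s}'})(e_{j_k}) = \Re(s_i)/m_i \ge \eps/M = (\eps/M)\varphi_{\Sigma}(e_{j_k})$ for $e_{j_k} \in \Sigma_i(1)$; comparing coefficientwise yields $\langle\varphi_{\Sigma,\m}\cdot\varphi_{\mathbf{s}'},x\rangle \ge \tfrac{\eps}{M}\varphi_{\Sigma}(x)$ on $\sigma$, hence on $N_{\RR}$ since $\Sigma$ is complete. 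As $\varphi_{\Sigma}$ is continuous, positively homogeneous of degree one and (again by completeness) strictly positive off the origin, $\varphi_{\Sigma}(x) \ge c\|x\|$ for some $c = c(\Sigma) > 0$. Therefore
\[
I_v(\mathbf{s}') \le \mu_v(T(\calO_v)) \sum_{\lambda \in \Lambda_v} q_v^{-\frac{\eps c}{M}\|\lambda\|} \quad (v \nmid \infty), \qquad I_v(\mathbf{s}') \le \mu_v(T(\calO_v)) \int_{X_*(T_v)_{\RR}} e^{-\frac{\eps c}{M}\|x\|}\,dx \quad (v \mid \infty),
\]
and both right-hand sides are finite — in the non-archimedean case because $\#\{\lambda \in \Lambda_v : \|\lambda\| \le R\} = O(R^{\rank\Lambda_v})$, so the series is dominated by $\sum_{t \ge 0} t^{\rank\Lambda_v - 1} q_v^{-\frac{\eps c}{M}t} < \infty$, and in the archimedean case trivially — and, crucially, independent of $\mathbf{s}'$ throughout $\mathbf{s}' \ge \eps$. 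This gives absolute convergence together with the asserted uniform bound, which moreover does not depend on $\chi_v$. For an explicit constant when $v \nmid \infty$ one can instead split $\Lambda_v$ over the finitely many maximal cones, using $\Lambda_v \cap \sigma \subseteq N \cap \sigma = \ZZ_{\ge 0}e_{j_1} + \dots + \ZZ_{\ge 0}e_{j_d}$ to bound each contribution by a product of geometric series $\prod_k (1 - q_v^{-\Re(s_{i(j_k)})/m_{i(j_k)}})^{-1} \le (1 - q_v^{-\eps/M})^{-d}$.
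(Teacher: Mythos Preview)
Your proposal is correct and follows the same approach as the paper: bound the integrand in absolute value using $|\chi_v|=1$, $0\le\delta_v\le 1$, and $|H_{\m,v}(\mathbf{s},t_v)|=H_{\m,v}(\Re(\mathbf{s}),t_v)$, thereby reducing to the convergence of $\widehat{H}_{\m,v}(1,1;-\eps)$. The only difference is that the paper dispatches this last step by citing \cite[Rem.~2.2.8,~Prop.~2.3.2]{BT}, whereas you unpack it explicitly via Lemma~\ref{lplem} and the comparison $\varphi_{\Sigma,\m}\cdot\varphi_{\mathbf{s}'}\ge(\eps/M)\varphi_{\Sigma}$ --- which is precisely the content of those cited results in this setting.
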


\begin{proof}
We have
\begin{equation}
|\widehat{H}_{\m,v}(\delta_v, \chi_v;-\mathbf{s})| \leq \int_{T(K_v)}\l|\f{\delta_v(t_v)\chi_v(t_v)}{H_{\m,v}(\mathbf{s},t_v)}\r|\textrm{d}\mu_v \leq \widehat{H}_{\m,v}(1,1;-\eps). 
\end{equation}
Uniform boundedness of $\widehat{H}_{\m,v}(1,1;-\eps)$ follows from \cite[Rem.~2.2.8,~Prop.~2.3.2]{BT}.
\end{proof}

\begin{proposition} \label{prop:nonvan}
For any $v \in \Omega_K$ and $\delta_v \in \{\delta_{\m,v}^*,\delta_{\m,\geom,v}^*\}$, the local Fourier transform $\widehat{H}_{\m,v}(\delta_v, 1;-\mathbf{s})$ is non-vanishing for $\mathbf{s} \in \R_{>0}$. 
\end{proposition}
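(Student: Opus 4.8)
The plan is to exploit positivity. When $\chi_v = 1$ and $\mathbf{s} \in \R_{>0}$, the function $\varphi_{\Sigma,\m}\cdot\varphi_{\mathbf{s}}$ is real-valued, so $H_{\m,v}(\mathbf{s},t_v) = q_v^{\langle\varphi_{\Sigma,\m}\cdot\varphi_{\mathbf{s}},\overline{t_v}\rangle}$ is a positive real number for every $t_v$; since $\delta_v$ takes values in $\{0,1\}$ and $\chi_v \equiv 1$, the integrand $t_v \mapsto \delta_v(t_v)/H_{\m,v}(\mathbf{s},t_v)$ is a nonnegative real-valued function on $T(K_v)$. By Proposition~\ref{prop:lftsarenice}, applied with $\eps = \min_i \Re(s_i) > 0$, the integral defining $\widehat{H}_{\m,v}(\delta_v,1;-\mathbf{s})$ converges absolutely, so it is a well-defined element of $\R_{\geq 0}$, and it remains only to rule out the value $0$.

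For this I would exhibit a nonempty open subset of $T(K_v)$ on which the integrand is bounded below by a positive constant; since Haar measure assigns positive mass to every nonempty open set, this forces the integral to be positive. When $v$ is non-archimedean, I would take the compact open subgroup $\mathbf{K}_{\m,v}^*$ (respectively $\mathbf{K}_{\m,\geom,v}^*$) furnished by the preceding proposition: on it $\delta_v \equiv 1$ and $H_{\m,v}(\mathbf{s},\cdot) = H_{\Sigma,v}(\varphi_{\Sigma,\m}\cdot\varphi_{\mathbf{s}},\cdot) \equiv 1$, since $\varphi_{\Sigma,\m}\cdot\varphi_{\mathbf{s}} \in \PL(\Sigma)^G_{\CC}$, so the integrand is identically $1$ there and hence $\widehat{H}_{\m,v}(\delta_v,1;-\mathbf{s}) \geq \mu_v(\mathbf{K}_{\m,v}^*) > 0$. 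When $v$ is archimedean we have $\delta_v \equiv 1$ by definition, so the integrand is the everywhere strictly positive continuous function $H_{\m,v}(\mathbf{s},\cdot)^{-1}$; it is bounded below by a positive constant on any relatively compact open neighbourhood of $1_T$, and the same conclusion follows.

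Since this is a soft positivity-and-continuity argument, I do not expect a serious obstacle. The only points needing care are: making sure that the reality of $\mathbf{s}$ together with $\chi_v = 1$ genuinely forces the integrand to be nonnegative and real, so that ``the integral is nonzero'' upgrades to ``the integral is positive''; and combining the convergence statement of Proposition~\ref{prop:lftsarenice} with the positivity of Haar measure on nonempty open sets to turn the pointwise lower bound near the identity into a strictly positive lower bound for the whole integral.
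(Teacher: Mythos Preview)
Your proposal is correct and follows essentially the same positivity argument as the paper: observe that for $\chi_v = 1$ and $\mathbf{s} \in \R_{>0}$ the integrand is nonnegative, then bound the integral below by its restriction to the compact open subgroup $\mathbf{K}_{\m,v}^*$ (resp.\ $\mathbf{K}_{\m,\geom,v}^*$), on which $\delta_v \equiv 1$, to obtain a strictly positive lower bound. Your explicit split into the archimedean and non-archimedean cases is in fact slightly more careful than the paper's uniform appeal to $\mathbf{K}_{\m,v}^*$, since at archimedean places the maximal compact subgroup need not have positive Haar measure in $T(K_v)$; your direct argument there (using $\delta_v \equiv 1$ and continuity of $H_{\m,v}(\mathbf{s},\cdot)^{-1}$) avoids this subtlety.
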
 

\begin{proof}
Recall that $\delta_{\m,v}^*(\mathbf{K}_{\m,v}^*) = \delta_{\m,\geom,v}^*(\mathbf{K}_{\m,\geom,v}^*) = 1$. Since $T(K_v)$ is locally compact, $\mathbf{K}_{\m,v}^*$ and $\mathbf{K}_{\m,\geom,v}^*$ have non-zero Haar measure. Since the integrand in $\widehat{H}_{\m,v}(\delta_v, 1;-\mathbf{s})$ is non-negative for $\mathbf{s} \in \R_{>0}$, 
\begin{equation*}
\widehat{H}_{\m,v}^*(\delta_{\m,v}, 1;-\mathbf{s}) \geq \int_{\mathbf{K}_{\m,v}^*}\f{1}{H_{\m,v}(\mathbf{s},t_v)}\textrm{d}\mu_v >0,
\end{equation*}
and similarly for the geometric case.
\end{proof}

\subsection{Tauberian theorem}

Our ultimate aim is to apply Delange's Tauberian theorem to our height zeta functions. The version we give below is a specialisation of the one given in \cite[Thm.~3.3]{LOU}, which is based on the work of Delange \cite[Thm.~III]{DEL}.

\begin{proposition}[Tauberian theorem] \label{thm:taub}
Suppose that there exist $a,b \in \mathbb{R}_{>0}$ such that $Z_{\m,\geom}^*(s)$ is absolutely convergent for $\Re(s) >a$ and $f^*_{\mathbf{m},\geom}(s)= Z_{\m,\geom}^*(s)(s-a)^b$ can be extended to a holomorphic function on $\Re(s) \geq a$ which is non-zero at $s=a$ and satisfies
\begin{equation}
Z_{\mathbf{m},\geom}^*(s) = \frac{f^*_{\mathbf{m},\geom}(a)}{(s-a)^b} + O\l(\f{1}{(s-a)^{b-\delta}}\r) \textrm{ as }s \rightarrow a
\end{equation}
for some $\delta >0$. Then, for $\Gamma$ the gamma function,
\begin{equation}
N_{\geom}(\Sigma, \m,S,*;B) \sim \frac{f^*_{\mathbf{m},\geom}(a)}{\Gamma(b)}B^a(\log B)^{b-1}.
\end{equation}
\end{proposition}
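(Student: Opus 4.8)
The plan is to reduce the statement to a known Tauberian result of Delange type — specifically, the version recorded in \cite[Thm.~3.3]{LOU}, which is itself derived from \cite[Thm.~III]{DEL} — by verifying that the hypotheses we have assumed match those of that result. Recall that a Dirichlet series $Z(s) = \sum_n a_n n^{-s}$ with non-negative coefficients, absolutely convergent for $\Re(s) > a$, admitting a meromorphic continuation to $\Re(s) \geq a$ with a single pole of order $b$ at $s = a$ of the shape $Z(s) = C(s-a)^{-b} + O((s-a)^{-b+\delta})$, has partial sums satisfying $\sum_{n \leq B} a_n \sim \tfrac{C}{\Gamma(b)} B^a (\log B)^{b-1}$. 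Our task is simply to package $Z_{\mathbf{m},\geom}^*(s)$ and the counting function $N_{\geom}(\Sigma,\m,S,*;B)$ so that this applies.

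First I would make the Dirichlet series structure explicit: since $H_{\m}(\mathbf{s},\cdot)$ specialises (after restricting the complex parameter $\mathbf{s}$ to the diagonal $s \mapsto (s,\dots,s)$, which is the relevant one-parameter family for the log-anticanonical height) to a height function $H(P) = H_{\m}(\mathbf{1},P)$ taking values in a discrete subset of $\mathbb{R}_{>0}$, we may write
\[
Z_{\mathbf{m},\geom}^*(s) = \sum_{P \in T(K)} \frac{\delta_{\mathbf{m},\geom}^*(P)}{H(P)^{s}} = \sum_{n \geq 1} \frac{a_n}{n^{s}},
\]
where $a_n = \#\{P \in T(K) : \delta_{\mathbf{m},\geom}^*(P) = 1,\ H(P) = n\} \geq 0$ (grouping heights into the countable value set and relabelling; the values lie in a finitely generated multiplicative semigroup, so this is a genuine Dirichlet series). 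Then $N_{\geom}(\Sigma,\m,S,*;B) = \sum_{n \leq B} a_n$ by definition of the counting function. Absolute convergence of $Z_{\mathbf{m},\geom}^*(s)$ for $\Re(s) > a$ is hypothesised; non-negativity of the $a_n$ is immediate.

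Next I would invoke the two remaining hypotheses directly: the assumption that $f^*_{\mathbf{m},\geom}(s) = Z_{\mathbf{m},\geom}^*(s)(s-a)^b$ extends holomorphically to $\Re(s) \geq a$ with $f^*_{\mathbf{m},\geom}(a) \neq 0$ gives the meromorphic continuation with a pole of order exactly $b$ at $s = a$, and the displayed error term $Z_{\mathbf{m},\geom}^*(s) = f^*_{\mathbf{m},\geom}(a)(s-a)^{-b} + O((s-a)^{-(b-\delta)})$ is precisely the quantitative local behaviour required. Applying \cite[Thm.~3.3]{LOU} (equivalently \cite[Thm.~III]{DEL}) with leading constant $C = f^*_{\mathbf{m},\geom}(a)$ and pole order $b$ then yields
\[
N_{\geom}(\Sigma,\m,S,*;B) = \sum_{n \leq B} a_n \sim \frac{f^*_{\mathbf{m},\geom}(a)}{\Gamma(b)} B^a (\log B)^{b-1},
\]
as claimed. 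The main obstacle — really the only non-bookkeeping point — is confirming that the holomorphy of $f^*_{\mathbf{m},\geom}$ on the \emph{closed} half-plane $\Re(s) \geq a$ together with the stated error term supplies exactly the analytic input Delange's theorem demands (continuation slightly past the boundary is \emph{not} needed in this formulation, only boundary regularity plus the power-saving remainder); once the Dirichlet series is written with non-negative coefficients this is a direct citation, so no further analytic work is required here. The genuine analytic content — establishing the continuation and the bound on $f^*_{\mathbf{m},\geom}$ — is carried out elsewhere via the height zeta function and Poisson summation, and is not part of this proof.
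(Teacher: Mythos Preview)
Your proposal is correct and matches the paper's approach: the paper does not give an independent proof of this proposition but simply records it as a specialisation of \cite[Thm.~3.3]{LOU} (itself derived from \cite[Thm.~III]{DEL}), which is precisely the reduction you carry out. One minor inaccuracy: the Batyrev--Tschinkel height is not integer-valued in general (the archimedean local heights $e^{\langle \varphi, \overline{t_v}\rangle}$ take continuous values), so your packaging as an ordinary Dirichlet series $\sum a_n n^{-s}$ is not quite right --- but this is harmless, since the cited Tauberian theorems apply to general Dirichlet series $\sum a_n \lambda_n^{-s}$ with $\lambda_n \to \infty$, and the Northcott property for the height ensures the counting function is well-defined.
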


\subsection{Poisson summation formula}

We will use the following form of the Poisson summation formula, which is a special case of \cite[Cor.~3.36]{BOU}.

\begin{proposition}[Poisson summation formula] \label{prop:psf}
Suppose that, for $\Re(\mathbf{s}) > 1$, the functions $P \mapsto \frac{\delta_{\m,\geom}^*(P)}{H_\m(\mathbf{s},P)}$ and $\chi \mapsto \widehat{H}_{\m}(\delta_{\m,\geom}^*, \chi; -\mathbf{s})$ are $L^1$ on $T(\mathbb{A}_K)$ and $T(\AA_K)/ \mathbf{K}_{\m,\geom}^* T(K)$ respectively. Then, in this region of $\mathbb{C}^r$, we have the equalities
\[
\begin{aligned}
Z_{\m}^*(\mathbf{s}) & = \f{1}{\left(2\pi\right)^{\rank X^*(T)}\vol(T(\AA_K)^1/T(K))}\int_{\chi \in (T(\AA_K)/ \mathbf{K}_{\m}^* T(K))^{\wedge}}\widehat{H}_{\m}(\delta_{\m}^*, \chi; -\mathbf{s})d\mu, \\
Z_{\m,\geom}^*(\mathbf{s}) & = \f{1}{\left(2\pi\right)^{\rank X^*(T)}\vol(T(\AA_K)^1/T(K))}\int_{\chi \in (T(\AA_K)/ \mathbf{K}_{\m,\geom}^* T(K))^{\wedge}}\widehat{H}_{\m}(\delta_{\m,\geom}^*, \chi; -\mathbf{s})d\mu.
\end{aligned}
\]
\end{proposition}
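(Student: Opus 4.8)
The plan is to deduce this from the abstract adelic Poisson summation formula \cite[Cor.~3.36]{BOU}, applied to the function $f_{\mathbf{s}}\colon T(\AA_K)\to\CC$, $f_{\mathbf{s}}(t)=\delta_{\m,\geom}^*(t)/H_{\m}(\mathbf{s},t)$ in the geometric case (and to $\delta_{\m}^*(t)/H_{\m}(\mathbf{s},t)$ in the non-geometric case), with respect to the discrete subgroup $T(K)\subset T(\AA_K)$. First I would check the input hypotheses of that corollary: $f_{\mathbf{s}}$ is continuous, since $H_{\m}$ is continuous and $\delta_{\m,\geom}^*$ is locally constant, being built from the reduction maps at the finitely many relevant places; the sum $\sum_{P\in T(K)}|f_{\mathbf{s}}(P)|=Z_{\m,\geom}^*(\mathbf{s})$ converges for $\Re(\mathbf{s})>1$, by comparison with the Batyrev--Tschinkel height zeta function for the anticanonical height (each local factor of $H_{\m,v}$ dominates a positive power of the anticanonical one); and the $L^1$ bounds on $f_{\mathbf{s}}$ and on $\chi\mapsto\widehat{H}_{\m}(\delta_{\m,\geom}^*,\chi;-\mathbf{s})$ are exactly what is assumed. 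With these in hand, \cite[Cor.~3.36]{BOU} expresses $\sum_{P\in T(K)}f_{\mathbf{s}}(P)=Z_{\m,\geom}^*(\mathbf{s})$ as an integral of $\widehat{H}_{\m}(\delta_{\m,\geom}^*,\chi;-\mathbf{s})$ over the Pontryagin dual of $T(\AA_K)/T(K)$ against a suitably normalised Haar measure.

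Next I would pin down the normalising constant. By Lemma~\ref{lplem}\ref{lplem3} the degree map splits $T(\AA_K)\cong T(\AA_K)^1\times X_*(T)_{\RR}$, with $T(\AA_K)^1/T(K)$ compact of finite volume $\vol(T(\AA_K)^1/T(K))$. Dually, $(T(\AA_K)/T(K))^{\wedge}$ splits as the discrete dual of the compact group $T(\AA_K)^1/T(K)$ times $X^*(T)_{\RR}\cong\RR^{\rank X^*(T)}$. On the compact factor Fourier inversion puts on the dual the counting measure scaled by $1/\vol(T(\AA_K)^1/T(K))$; on the Euclidean factor the standard normalisation contributes $1/(2\pi)^{\rank X^*(T)}$. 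Multiplying gives precisely the prefactor in the statement. I would also record here that the Euler product $\widehat{H}_{\m}(\delta,\chi;-\mathbf{s})=\prod_v\widehat{H}_{\m,v}(\delta_v,\chi_v;-\mathbf{s})$ used implicitly in the definition is legitimate: the local factors equal $1$ at every $v\notin S(\Sigma,\calX)$ where $\chi$ is unramified, and in general the local transforms are uniformly bounded by Proposition~\ref{prop:lftsarenice}, so the product converges absolutely to the adelic integral.

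Finally I would cut the domain of integration down from $(T(\AA_K)/T(K))^{\wedge}$ to $(T(\AA_K)/\mathbf{K}_{\m,\geom}^*T(K))^{\wedge}$. Since $\mathbf{K}_{\m,\geom}^*$ is a compact open subgroup of $T(\AA_K)$, the set of characters of $T(\AA_K)/T(K)$ that are trivial on $\mathbf{K}_{\m,\geom}^*$ is a closed subgroup, canonically identified with $(T(\AA_K)/\mathbf{K}_{\m,\geom}^*T(K))^{\wedge}$, and its complement is open; by Corollary~\ref{cor:nr} the integrand $\widehat{H}_{\m}(\delta_{\m,\geom}^*,\chi;-\mathbf{s})$ vanishes on that complement, so the integral over the full dual equals the integral over $(T(\AA_K)/\mathbf{K}_{\m,\geom}^*T(K))^{\wedge}$. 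This gives the second identity; the first follows verbatim on replacing $\delta_{\m,\geom}^*$ by $\delta_{\m}^*$ and $\mathbf{K}_{\m,\geom}^*$ by $\mathbf{K}_{\m}^*$ throughout.

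The main obstacle I anticipate is not a computation but making sure the hypotheses of \cite[Cor.~3.36]{BOU} hold on the nose, in particular that pointwise (rather than merely $L^2$) Fourier inversion applies. The assumed $L^1$ control of $f_{\mathbf{s}}$ and of its transform, together with continuity of $f_{\mathbf{s}}$ and of $\chi\mapsto\widehat{H}_{\m}(\delta_{\m,\geom}^*,\chi;-\mathbf{s})$ (the latter from Proposition~\ref{prop:lftsarenice}), should be exactly what is needed, but one must take care that the Haar measures on $T(\AA_K)$, on its dual, and on the intervening quotients are normalised compatibly so that no stray constant is lost; this bookkeeping, rather than anything conceptual, is where the care lies.
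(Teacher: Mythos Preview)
Your approach is exactly the paper's: the proposition is stated as a special case of \cite[Cor.~3.36]{BOU}, and your proposal simply fleshes out that citation, including the use of Corollary~\ref{cor:nr} to restrict to characters trivial on $\mathbf{K}_{\m,\geom}^*$. One small inaccuracy: the local factors $\widehat{H}_{\m,v}(\delta_v,\chi_v;-\mathbf{s})$ are \emph{not} equal to $1$ at good unramified places (they are the fan sums computed in Proposition~\ref{prop:geomreg}), so the Euler product is justified not by triviality of almost all factors but by the assumed $L^1$-ness of $f_{\mathbf{s}}$ on $T(\AA_K)$ together with Fubini; this does not affect the argument.
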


\subsection{Hecke characters}
\begin{mydef}
A \emph{Hecke character} over a number field $K$ is an automorphic character of $\mathbb{G}_{m,K}$, i.e.\ a continuous homomorphism $\chi = \left(\chi_v\right)_v: \mathbb{G}_m\left(\mathbb{A}_K\right) = \AA_K^* \rightarrow S^1$ such that $\prod_{v \in \Omega_K}\chi_v\left(a\right) = 1$ for all $a \in \mathbb{G}_m\left(K\right) = K^*$.
\end{mydef}

\begin{example}
As a first example of a non-trivial Hecke character, we have totally imaginary powers of the \emph{adelic absolute value map}
\[
\|\cdot\|_K: \mathbb{A}_K^* \rightarrow S^1, \quad \left(x_v\right)_v \mapsto \prod_{v \in \Omega_K}|x_v|_v.
\]
(We suppress ths subscript $K$ when it is clear from context.) That $\|\cdot\|^{i\theta}$, $\theta \in \mathbb{R}$ defines a Hecke character follows from Artin's product formula \cite[Prop.~III.1.3]{NEU}.
The kernel of $\|\cdot\|$ is denoted by $\mathbb{G}_m\left(\mathbb{A}_K\right)^1$.
\end{example}

\begin{mydef}
A Hecke character $\chi$ over $K$ is \emph{principal} if $\chi = \|\cdot\|^{i\theta}$ for some $\theta \in \mathbb{R}$.
\end{mydef}

\begin{mydef} \label{def:HeckeL}
The \emph{Hecke $L$-function} associated to a Hecke character $\chi$ over $K$ is the complex function given for $\Re(s) > 1$ by the Euler product
\[
L\left(\chi,s\right) = \prod_{v \in \Omega_K^f}L_v\left(\chi,s\right),
\]
where
$L_v\left(\chi,s\right) = 
\left(1-\frac{\chi_v\left(\pi_v\right)}{q_v^s}\right)^{-1}$ when $\chi$ is unramified at $v$ and is $1$ otherwise.
\end{mydef}

\begin{example}
For $\mathbf{1}$ the trivial Hecke character, we obtain the \emph{Dedekind zeta function}
\[
L\left(\mathbf{1},s\right) = \zeta_K\left(s\right) = \prod_{v \in \Omega_K^f}\left(1-\frac{1}{q_v^s}\right)^{-1}.
\]

More generally, for $\theta \in \mathbb{R}$, we have 
\[
L\left(\|\cdot\|^{i\theta},s\right) = \zeta_K\left(s+i\theta\right).
\]
\end{example}

\begin{mydef}
We call $\chi_\infty = (\chi_v)_{v \mid \infty}$ the \emph{infinity type} of a Hecke character $\chi$ over $K$.
\end{mydef}

For $v \mid \infty$, we have $\chi_v|_{\RR_{>0}} = |\cdot|_v^{i \kappa_v}$ for some $\kappa_v \in \mathbb{R}$, and we set $\|\chi_\infty\| = \max_{v \mid \infty}|\kappa_v|$.

The importance of Hecke $L$-functions for us is that they are relatively well understood and well behaved from an analytic perspective, as the following two results show.

\begin{proposition} \label{prop:cont} \cite[\S6]{HEC}
Let $\chi$ be a Hecke character over a number field $K$. Then $L\left(\chi,s\right)$ admits a meromorphic continuation to $\mathbb{C}$; this continuation has a simple pole at $s = 1 - i\theta$ if $\chi = \|\cdot\|^{i\theta}$ for some $\theta \in \mathbb{R}$, and is holomorphic if $\chi$ is non-principal.
\end{proposition}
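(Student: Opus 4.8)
The plan is to deduce this from the standard analytic theory of Hecke $L$-functions, following Tate's adelic method (equivalently Hecke's original theta-function argument). First I would pass to the completed $L$-function: attach to $\chi$ the usual archimedean local factors $L_v(\chi,s)$ for $v\mid\infty$ (finite products of $\Gamma_{\mathbb{R}}$ and $\Gamma_{\mathbb{C}}$ with shifts determined by the infinity type), together with the local factors at the finitely many ramified finite places, and let $\Lambda(\chi,s)$ be the resulting completed Euler product. Since the archimedean factors are explicit meromorphic functions whose reciprocals are entire, and the ramified finite factors form a finite product of entire nonvanishing functions, it suffices to establish the meromorphic continuation and the location of poles for $\Lambda(\chi,s)$ and then divide these factors back out. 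A pole of $\Lambda$ at $s_0$ survives in the function $L(\chi,s)$ of Definition~\ref{def:HeckeL} unless it is absorbed by a pole of some archimedean factor; one checks directly that for $\chi=\|\cdot\|^{i\theta}$ this absorption occurs at $s=-i\theta$ but not at $s=1-i\theta$, where the $\Gamma$-factors are finite and nonzero.

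Next I would represent $\Lambda(\chi,s)$ as a Tate zeta integral. Fix a standard Schwartz--Bruhat function $f=\bigotimes_v f_v$ on $\mathbb{A}_K$ (a Gaussian at each archimedean place, and the characteristic function of $\mathcal{O}_v$, suitably twisted at ramified places, at each finite place), and set $Z(f,\chi,s)=\int_{\mathbb{A}_K^{*}}f(x)\chi(x)\|x\|^{s}\,d^{*}x$, which converges absolutely for $\Re(s)>1$. A place-by-place computation identifies $Z(f,\chi,s)$ with $\Lambda(\chi,s)$ up to an explicit nonzero holomorphic factor coming from the finitely many ``bad'' local integrals and the normalisation of Haar measures. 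To continue the integral, split it according to whether $\|x\|\ge 1$ or $\|x\|\le 1$: the first part is entire in $s$, and to the second I would apply the adelic Poisson summation formula (Riemann--Roch for $\mathbb{A}_K/K$) to $\sum_{\xi\in K}f(x\xi)$. This produces a functional equation exchanging $f\leftrightarrow\widehat f$ and $(\chi,s)\leftrightarrow(\chi^{-1}\|\cdot\|,1-s)$ modulo two boundary terms, which are the sole source of poles. Each boundary term is, up to a nonzero constant, of the form (value of $f$ or $\widehat f$ at $0$) times $(s-\text{const})^{-1}$ times the integral of $\chi$ over the norm-one idele class group. The latter is compact (Dirichlet's unit theorem together with finiteness of the class number), so by character orthogonality this integral vanishes unless $\chi$ is trivial on it, that is, unless $\chi=\|\cdot\|^{i\theta}$ is principal.

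This dichotomy gives the statement. When $\chi$ is non-principal both boundary terms vanish, so $\Lambda(\chi,s)$, and hence $L(\chi,s)$, extends to an entire function. When $\chi=\|\cdot\|^{i\theta}$, the zeta integral is, after the shift $s\mapsto s+i\theta$, the one computing $\zeta_K$, whose completed version has simple poles precisely at the two points $s+i\theta\in\{0,1\}$; dividing out the archimedean factors kills the pole at $s+i\theta=0$ and leaves a simple pole at $s+i\theta=1$, i.e.\ at $s=1-i\theta$. I expect the genuine obstacle to be nothing analytic --- Poisson summation and the compactness of the norm-one idele class group are completely standard --- but rather the careful bookkeeping in the first and last steps: tracking exactly which poles of the completed object persist once the archimedean and ramified Euler factors are removed, so that the pole is pinned down at $s=1-i\theta$ and nowhere else.
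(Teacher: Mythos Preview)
Your argument is the standard Tate-thesis derivation and is correct. Note, however, that the paper does not supply its own proof of this proposition: it is quoted as a classical result with a reference to Hecke \cite[\S6]{HEC} and no proof environment follows. So there is nothing to compare against; your sketch simply fills in a proof the authors chose to cite rather than reproduce.
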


\begin{proposition} \cite[Exercise~3,~\S5.2]{IK} \label{prop:hecbd}
Let $\chi$ be a non-principal Hecke character of $K$, $C$ be a compact subset of $\Re(s) \geq 1$ and $\varepsilon > 0$. Then, for $q(\chi)$ the conductor of $\chi$,
\[
L\left(\chi,s\right) \ll_{\varepsilon, C} q\left(\chi\right)^{\varepsilon} \left(1+ \|\chi_{\infty}\|\right)^{\varepsilon}, \quad \left(s-1\right)\zeta_K\left(s\right) \ll_C 1, \quad s \in C.
\]
\end{proposition}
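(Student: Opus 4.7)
The plan is to handle the two statements separately, treating the second as an immediate consequence of meromorphic continuation and the first via the Phragmén--Lindelöf convexity principle applied to the completed $L$-function.

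For the bound $(s-1)\zeta_K(s) \ll_C 1$, observe that by Proposition~\ref{prop:cont} the function $\zeta_K(s) = L(\mathbf{1},s)$ is meromorphic on $\mathbb{C}$ with a simple pole at $s = 1$ as its only singularity. Hence $(s-1)\zeta_K(s)$ is entire, and in particular continuous on the compact set $C$, so it is bounded there by a constant depending only on $C$.

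For the bound on $L(\chi,s)$ with $\chi$ non-principal, I would proceed in three steps. First, on the line $\Re(s) = 1 + \delta$ for any fixed $\delta > 0$, the Euler product in Definition~\ref{def:HeckeL} gives
\[
|L(\chi,s)| \leq \prod_{v \in \Omega_K^f}\left(1 - q_v^{-(1+\delta)}\right)^{-1} = \zeta_K(1+\delta),
\]
a bound independent of $\chi$. Second, on the line $\Re(s) = -\delta$, I would invoke the standard functional equation $\Lambda(\chi,s) = W(\chi)\Lambda(\bar\chi,1-s)$, where $\Lambda(\chi,s)$ incorporates the factor $q(\chi)^{s/2}$ and gamma factors depending on the infinity type, and $|W(\chi)| = 1$. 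Combining the Euler-product bound for $L(\bar\chi, 1-s)$ on $\Re(1-s) = 1+\delta$ with Stirling's formula for the ratio of gamma factors, one obtains
\[
|L(\chi,s)| \ll_{\delta} q(\chi)^{1/2+\delta}\bigl(1 + \|\chi_\infty\| + |\Im(s)|\bigr)^{N(1/2+\delta)},
\]
where $N = [K:\mathbb{Q}]$. Third, since $\chi$ is non-principal the $L$-function $L(\chi,s)$ is entire by Proposition~\ref{prop:cont} and of finite order in vertical strips, so the Phragmén--Lindelöf principle applied to the strip $-\delta \leq \Re(s) \leq 1+\delta$ yields, for any $\varepsilon > 0$,
\[
|L(\chi,s)| \ll_{\varepsilon} \bigl(q(\chi)(1 + \|\chi_\infty\| + |\Im(s)|)^N\bigr)^{\varepsilon}
\]
uniformly on $\Re(s) \geq 1$. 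Since $C \subset \{\Re(s) \geq 1\}$ is compact, $|\Im(s)|$ is bounded on $C$, and replacing $\varepsilon$ by $\varepsilon/N$ absorbs the exponent $N$, giving the claimed bound.

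The main technical obstacle is the careful analysis of the gamma factors on $\Re(s) = -\delta$ via Stirling's formula, since the infinity type $\chi_\infty$ shifts the arguments of the gamma functions by amounts proportional to $\|\chi_\infty\|$. This is a routine but somewhat bookkeeping-heavy computation; it is carried out in detail in \cite[\S5.2]{IK}, so rather than reproducing it I would cite that reference for the precise estimate and use it as a black box in the Phragmén--Lindelöf argument above.
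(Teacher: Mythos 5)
The paper offers no proof of this proposition — it is quoted directly from \cite[Exercise~3,~\S5.2]{IK} — and your argument is precisely the standard convexity-bound proof that the cited exercise intends: boundedness of $(s-1)\zeta_K(s)$ on compacta by meromorphic continuation, and for non-principal $\chi$ the Euler-product bound on $\Re(s)=1+\delta$, the functional equation plus Stirling on $\Re(s)=-\delta$, and Phragmén--Lindelöf in between, with $\delta$ chosen small in terms of $\varepsilon$ so that the interpolated exponent at $\Re(s)=1$ drops below $\varepsilon$. Your proposal is correct.
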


\subsection{Character correspondence}

Let $X_{\Sigma}$ be a compactification of a torus $T/K$ with the extensions $K_i/K$, $i=1,\dots,r$ as before. To each automorphic character $\chi$ of $T$ we may associate Hecke characters $\chi_i$ over $K_i$, $i=1,\dots,r$ as in \cite[\S3.1]{BT}. Explicitly, there is a morphism $\gamma: \prod_{i=1}^r R_{K_i/K}\mathbb{G}_m \rightarrow T$. Thus, given a character $\chi \in \left(T(\mathbb{A}_K)/T(K)\right)^{\wedge}$, one obtains $r$ Hecke characters $\chi_i: \mathbb{A}_{K_i}^*/K_i^* \rightarrow S^1$. Of importance to us is the following fact.

\begin{lemma}
If $\chi_v$ is trivial on $\mathbf{K}_{\m,\geom,v}^*$, then, for each $i \in \{1,\dots,r\}$ and each $w \in \Omega_{K_i}$ over $v$, there exists a compact open subgroup $\mathbf{L}_{\m,\geom,w}^* \subset \calO_w^*$ of finite index on which the Hecke character $\chi_i$ is $1$. Moreover, when $v \not\in S(\Sigma,\calX)$, we have $\mathbf{L}_{\m,\geom,w}^* = \O_w^*$.
\end{lemma}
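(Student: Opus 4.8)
The plan is to unwind the character correspondence place by place and transport triviality along the morphism of $K$-tori $\gamma \colon \prod_{j=1}^{r} R_{K_j/K}\mathbb{G}_m \to T$ underlying it. Fix $i \in \{1,\dots,r\}$ and $w \in \Omega_{K_i}$ with $w \mid v$, and let $\iota_i$ be the inclusion of the $i$-th factor into $\prod_{j=1}^{r} R_{K_j/K}\mathbb{G}_m$, so that by construction $\chi_i = \chi \circ \gamma \circ \iota_i$ as a character of $R_{K_i/K}\mathbb{G}_m(\mathbb{A}_K) = \mathbb{A}_{K_i}^{*}$. First I would pass to $v$-adic points: under the canonical identification $R_{K_i/K}\mathbb{G}_m(K_v) = \prod_{w' \mid v} K_{i,w'}^{*}$, the local component $\chi_{i,w} \colon K_{i,w}^{*} \to S^{1}$ of $\chi_i$ is the map $u \mapsto \chi_v(\gamma_v(\iota_{i,w}(u)))$, where $\gamma_v \colon \prod_{j}\prod_{w' \mid v} K_{j,w'}^{*} \to T(K_v)$ is induced by $\gamma$ and $\iota_{i,w}$ is the inclusion of the $(i,w)$-th coordinate. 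This matching of local components under a morphism of tori and under restriction of scalars is the only step requiring genuine care; the rest is formal.

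Next I would note that $\gamma_v$ carries compact subgroups of its source into the maximal compact subgroup $T(\calO_v)$ of $T(K_v)$ (Definition~\ref{def:kt}): indeed $T(\calO_v) = \ker(\deg_{T,v})$ by Lemma~\ref{lplem}, and the image under $\deg_{T,v}$ of a compact subgroup of $T(K_v)$ is a compact, hence finite, subgroup of the torsion-free group $X_*(T_v)$, so it is trivial; no integral model of $\gamma$ is needed, only continuity. In particular $u \mapsto \gamma_v(\iota_{i,w}(u))$ is a continuous homomorphism $\calO_w^{*} \to T(\calO_v)$, and I would set
\[
\mathbf{L}_{\m,\geom,w}^{*} := \{\, u \in \calO_w^{*} : \gamma_v(\iota_{i,w}(u)) \in \mathbf{K}_{\m,\geom,v}^{*} \,\},
\]
the preimage of the open subgroup $\mathbf{K}_{\m,\geom,v}^{*} \subseteq T(\calO_v)$. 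By continuity this is open in $\calO_w^{*}$, hence, being an open subgroup of a compact group, compact and of finite index (finite index is also visible directly from that of $\mathbf{K}_{\m,\geom,v}^{*}$ in $T(\calO_v)$). Triviality of $\chi_i$ on $\mathbf{L}_{\m,\geom,w}^{*}$ then follows at once from the formula above: for such $u$ we have $\gamma_v(\iota_{i,w}(u)) \in \mathbf{K}_{\m,\geom,v}^{*}$, on which $\chi_v$ vanishes by hypothesis, whence $\chi_{i,w}(u) = \chi_v(\gamma_v(\iota_{i,w}(u))) = 1$.

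For the final assertion, if $v \notin S(\Sigma,\calX)$ then $\mathbf{K}_{\m,\geom,v}^{*} = T(\calO_v)$, and since $\gamma_v(\iota_{i,w}(\calO_w^{*})) \subseteq T(\calO_v)$ by the previous paragraph, we get $\mathbf{L}_{\m,\geom,w}^{*} = \calO_w^{*}$. I do not anticipate any serious obstacle here: the lemma is essentially a functoriality statement — ramification of $\chi_i$ at $w$ is controlled by that of $\chi$ at $v$ — and once the local components are correctly identified in the first step, the argument is elementary topological group theory.
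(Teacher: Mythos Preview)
The paper states this lemma without proof, leaving it as a routine verification; your argument supplies exactly the details one would expect and is correct. The only point worth flagging is that your clean observation that $\gamma_v$ sends compact subgroups into $T(\calO_v)$ via $\ker(\deg_{T,v}) = T(\calO_v)$ and torsion-freeness of $X_*(T_v)$ sidesteps any need to spread $\gamma$ out over $\calO_v$, which is a pleasant economy.
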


\section{Fan functions} \label{sec:fan}
In order to ``regularise'' (approximate) Fourier transforms in the height zeta function method, Batyrev and Tschinkel used the degree maps (Section \ref{sec:degmaps}) to realise local Fourier transforms as multi-dimensional geometric series defined via the fan $\Sigma$. They showed that these functions are well approximated by the local factors of certain Hecke $L$-functions.

In this section we recall the Batyrev--Tschinkel fan functions and develop analogues for semi-integral points by excising certain terms. Let $\Sigma$ be the fan of our toric variety, and let $\Sigma(1) = \bigcup_{i,w}\Sigma_{i,w}(1)$ denote the decomposition into $G_v$-orbits for a non-archimedean place $v$ of $K$. Recall (Proposition~\ref{prop:bij}) that the $\Sigma_{i,w}(1)$ are in bijection with places $w$ of $K_i = E^{G_i}$ over $v$, and that the length $f_{i,w}$ of $\Sigma_{i,w}(1)$ equals the inertia degree of $w$ over $v$. 

\begin{mydef}
Instantiate for each $(i,w)$ a variable $u_{i,w}$. For each $\sigma \in \Sigma^{G_v}$, set $I_v(\sigma) = \{(i,w): \Sigma_{i,w}(1) \subset \sigma(1) \}$. Let $\mathbf{u} = (u_{i,w})_{i,w}$. Define the functions $R_{\sigma,v}(\mathbf{u})$ and $Q_{\Sigma,v}(\mathbf{u})$ by
\[
\begin{aligned}
R_{\sigma,v}(\mathbf{u}) & := \prod_{(i,w) \in I_v(\sigma)}\frac{u_{i,w}^{f_{i,w}}}{1-u_{i,w}^{f_{i,w}}} & \in \QQ(\mathbf{u}), \\
\sum_{\sigma \in \Sigma^{G_v}}R_{\sigma,v}(\mathbf{u}) & = Q_{\Sigma,v}(\mathbf{u})\prod_{i,w}\left(1-u_{i,w}^{f_{i,w}}\right)^{-1}  & \in \QQ[\mathbf{u}].
\end{aligned}
\]
\end{mydef}

\begin{note}
Note that each factor of $R_{\sigma,v}(\mathbf{u})$ is a geometric series $\frac{x^f}{1-x^f} = \sum_{r=1}^\infty x^{rf}$.

Excising the terms $x^{rf}$, $r < m$ leaves
\[
\sum_{r = m}^\infty x^{rf} = \frac{x^{mf}}{1-x^f} = \frac{x^{mf}}{1-x^{mf}} + O\left(x^{(m+1)f}\right).
\]

Excising instead those terms $x^{rf}$ with $r \nmid m$ leaves
\[
\sum_{r = 1}^\infty x^{rmf} = \frac{x^{mf}}{1-x^{mf}}.
\]
\end{note}

\begin{mydef}
For $\mathbf{m} \in \ZZ_{\geq 1}^r$, define the functions $R^{\cc}_{\sigma,\mathbf{m},\geom,v}(\mathbf{u})$ and $Q^{\cc}_{\Sigma,\mathbf{m},\geom,v}(\mathbf{u})$ by
\[
\begin{aligned}
R^{\cc}_{\sigma,\mathbf{m},\geom,v}(\mathbf{u}) &:= \prod_{(i,w) \in I_v(\sigma)}\frac{u_{i,w}^{m_if_{i,w}}}{1-u_{i,w}^{f_{i,w}}} & \in \QQ(\mathbf{u}), \\
\sum_{\sigma \in \Sigma^{G_v}}R^{\cc}_{\sigma,\mathbf{m},\geom,v}(\mathbf{u}) &= Q^{\cc}_{\Sigma,\mathbf{m},\geom,v}(\mathbf{u})\prod_{i,w}\left(1-u_{i,w}^{m_if_{i,w}}\right)^{-1} & \in \QQ[\mathbf{u}].
\end{aligned}
\]
(Note the difference in denominators between the $R^{\cc}_{\sigma,\mathbf{m},\geom,v}(\mathbf{u})$ and the above sum.)

Define also the functions $R^{\dd}_{\sigma,\mathbf{m},\geom,v}(\mathbf{u})$ and $Q^{\dd}_{\Sigma,\mathbf{m},\geom,v}(\mathbf{u})$ by
\[
\begin{aligned}
R^{\dd}_{\sigma,\mathbf{m},\geom,v}(\mathbf{u}) & := \prod_{(i,w) \in I_v(\sigma)}\frac{u_{i,w}^{m_i f_{i,w}}}{1-u_{i,w}^{m_if_{i,w}}} & \in \QQ(\mathbf{u}), \\
\sum_{\sigma \in \Sigma^{G_v}}R^{\dd}_{\sigma,\mathbf{m},\geom,v}(\mathbf{u}) & = Q^{\dd}_{\Sigma,\mathbf{m},\geom,v}(\mathbf{u})\prod_{i,w}\left(1-u_{i,w}^{m_if_{i,w}}\right)^{-1} & \in \QQ[\mathbf{u}].
\end{aligned}
\]
\end{mydef}

We now prove that $Q^{*}_{\Sigma,\mathbf{m},\geom,v}(\mathbf{u}) - 1$ has high degree. Fixing $i = i_0$, the \emph{$i_0$-degree} $\deg_{i_0}(f)$ of $f \in \QQ[\mathbf{u}]$ is its degree with respect to the variables $u_{i_0,w}$. We set

\[
A_{\Sigma,v,i} :=\{\sigma \in \Sigma^{G_v}: I_v(\sigma) = \{(i,w_0)\}, f_{i,w_0} = 1\}, \quad
A_{\Sigma,v}:=\bigcup_{i=1}^rA_{\Sigma,v,i},
\]

\begin{proposition} \label{prop:fanfuncgeom}
\begin{enumerate} [leftmargin=*]
\item We have $Q^{\cc}_{\Sigma,\mathbf{m},\geom,v}(\mathbf{u}) = 1 + P^{\cc}_{\Sigma,\mathbf{m},\geom,v}(\mathbf{u})$, 
where $P^{\cc}_{\Sigma,\mathbf{m},\geom,v}(\mathbf{u}) \in \QQ[\mathbf{u}]$ satisfies $\deg_i(P^{\cc}_{\Sigma,\mathbf{m},\geom,v}(\mathbf{u})) \geq  m_i + 1$ for all $i \in \{1,\dots,r\}$.
\item Set $\mathbf{u}^{\mathbf{m}}:=\left(u_{i,w}^{m_i}\right)_{i,w}$. We have
$Q^{\dd}_{\Sigma,\mathbf{m},\geom,v}\left(\mathbf{u}\right) = 1 + P^{\dd}_{\Sigma,\geom,v}\left(\mathbf{u}^\mathbf{m}\right)$,
where $P^{\dd}_{\Sigma,\geom,v}(\mathbf{v}) \in \QQ[\mathbf{v}]$ satisfies $\deg(P^{\dd}_{\Sigma,\geom,v}(\mathbf{v})) \geq 2$.
\end{enumerate}
\end{proposition}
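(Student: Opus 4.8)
The plan is to convert the two defining identities into explicit polynomial identities, read off the constant term, and then bound the degrees of the surviving monomials, the only delicate point being a single family of low-degree monomials that must cancel. I would first clear denominators: using $\tfrac{1-u^{mf}}{1-u^{f}}=1+u^{f}+\dots+u^{(m-1)f}$ and multiplying the Campana identity through by $\prod_{i,w}\bigl(1-u_{i,w}^{m_if_{i,w}}\bigr)$, one obtains
\[
Q^{\cc}_{\Sigma,\mathbf{m},\geom,v}(\mathbf{u}) = \sum_{\sigma\in\Sigma^{G_v}} \prod_{(i,w)\in I_v(\sigma)}\Bigl(\sum_{k=m_i}^{2m_i-1}u_{i,w}^{k f_{i,w}}\Bigr)\prod_{(i,w)\notin I_v(\sigma)}\bigl(1-u_{i,w}^{m_i f_{i,w}}\bigr).
\]
Setting $\mathbf{u}=\mathbf{0}$, every inner sum vanishes (as $m_i\ge1$) and every factor of the right-hand product equals $1$, so only the summand $\sigma=\{0\}$, for which $I_v(\{0\})=\emptyset$, survives and equals $1$. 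Hence $Q^{\cc}_{\Sigma,\mathbf{m},\geom,v}=1+P^{\cc}_{\Sigma,\mathbf{m},\geom,v}$ with $P^{\cc}_{\Sigma,\mathbf{m},\geom,v}(\mathbf{0})=0$.

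Next I would fix $i$ and track degrees. The key observation is that each variable $u_{i,w}$ appears in exactly one factor of each summand above --- either in $1-u_{i,w}^{m_if_{i,w}}$ or in $\sum_{k=m_i}^{2m_i-1}u_{i,w}^{kf_{i,w}}$ --- so the $u_{i,w}$-degree of any monomial lies in $\{0\}\cup\{kf_{i,w}:m_i\le k\le 2m_i-1\}$. Consequently any monomial of $P^{\cc}_{\Sigma,\mathbf{m},\geom,v}$ involving only the variables $u_{i,w}$ for our fixed $i$ has degree at least $m_i$, with equality only for the monomial $u_{i,w}^{m_i}$ with $f_{i,w}=1$. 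For such an exceptional monomial the only contributing summands are $\sigma=\{0\}$, which gives $-u_{i,w}^{m_i}$, and the ray $\Sigma_{i,w}(1)$ itself, which lies in $\Sigma^{G_v}$ precisely because $f_{i,w}=1$ forces it to be $G_v$-fixed and which belongs to $A_{\Sigma,v,i}$, giving $+u_{i,w}^{m_i}$; any other summand carries a factor with vanishing constant term (a second pair in $I_v(\sigma)$, or --- when $I_v(\sigma)=\{(i,w)\}$ with $f_{i,w}\ge 2$ --- only powers $u_{i,w}^{kf_{i,w}}$ with $kf_{i,w}\ge 2m_i>m_i$). These two contributions cancel, so $u_{i,w}^{m_i}$ does not occur in $P^{\cc}_{\Sigma,\mathbf{m},\geom,v}$; thus $P^{\cc}_{\Sigma,\mathbf{m},\geom,v}$ has no monomial in the variables $u_{i,w}$ ($i$ fixed) alone of degree at most $m_i$, which gives $\deg_i P^{\cc}_{\Sigma,\mathbf{m},\geom,v}\ge m_i+1$. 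The same count also yields the quantitative refinement needed for the subsequent analytic continuation: any monomial of $P^{\cc}_{\Sigma,\mathbf{m},\geom,v}$ involving two distinct pairs $(i,w)$ has, for the system of weights assigning $1/m_i$ to $u_{i,w}$, weighted degree at least $2$.

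For the Darmon statement I would use that the two occurrences of $u_{i,w}$ in the defining identity now share the exponent pattern $m_if_{i,w}$; hence the substitution $v_{i,w}=u_{i,w}^{m_i}$ identifies $R^{\dd}_{\sigma,\mathbf{m},\geom,v}(\mathbf{u})$ with the Batyrev--Tschinkel fan function $R_{\sigma,v}(\mathbf{v})$, and summing over $\sigma$ and comparing the two sides gives $Q^{\dd}_{\Sigma,\mathbf{m},\geom,v}(\mathbf{u})=Q_{\Sigma,v}(\mathbf{u}^{\mathbf{m}})$, so that $P^{\dd}_{\Sigma,\geom,v}(\mathbf{v})=Q_{\Sigma,v}(\mathbf{v})-1$. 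The bound $\deg P^{\dd}_{\Sigma,\geom,v}\ge 2$ is then the assertion that $Q_{\Sigma,v}$ has constant term $1$ and no linear term: the first is the constant-term computation above with every $m_i=1$, and the second follows from the same cancellation (a linear monomial $v_{i,w}$ requires $f_{i,w}=1$, whereupon the $-v_{i,w}$ from $\sigma=\{0\}$ is cancelled by the $+v_{i,w}$ from the ray $\Sigma_{i,w}(1)$, while no linear monomial can arise at all when $f_{i,w}\ge 2$).

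The main obstacle will be getting the cancellation bookkeeping exactly right: one must check that for every pair $(i,w)$ with $f_{i,w}=1$ the ray $\Sigma_{i,w}(1)$ genuinely is a cone of $\Sigma^{G_v}$ (so that the cancelling summand exists) and, conversely, that no other cone of $\Sigma^{G_v}$ contributes to the monomial $u_{i,w}^{m_i}$ --- which uses strong convexity and regularity of $\Sigma$ to exclude cones having a single ray but with $I_v(\sigma)\ne\{(i,w)\}$, together with the fact that $G_v$ acts transitively on each orbit $\Sigma_{i,w}(1)$. Everything else is routine manipulation of finite geometric series and comparison of denominators.
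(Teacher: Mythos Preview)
Your approach is essentially the same as the paper's: both clear denominators to write $Q^{\cc}$ as a sum of explicit polynomials $N_\sigma$, read off the constant term from $\sigma=\{0\}$, and exhibit the cancellation of the minimal pure monomials $u_{i,w}^{m_i}$ (for $f_{i,w}=1$) between the $\sigma=\{0\}$ summand and the ray $\sigma\in A_{\Sigma,v,i}$, while the Darmon case is reduced to the Batyrev--Tschinkel statement via the substitution $v_{i,w}=u_{i,w}^{m_i}$. Your added remark that mixed monomials automatically carry weighted degree at least $2$ (weight $1/m_i$ on $u_{i,w}$) is exactly what is needed for the Euler-product convergence in the subsequent corollary and makes explicit a point the paper's proof leaves somewhat implicit.
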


\begin{proof}
\begin{enumerate}
\item Fix $i$. Clearing denominators, $Q^{\cc}_{\Sigma,\mathbf{m},\geom,v}(\mathbf{u}) = \sum_{\sigma \in G_v} N^{\cc}_{\sigma,\mathbf{m},\geom,v}(\mathbf{u})$,
where
\[
N^{\cc}_{\sigma,\mathbf{m},\geom,v}(\mathbf{u}) = \prod_{(i,w) \in I_v(\sigma)}u_{i,w}^{m_if_{i,w}}\frac{1-u_i^{m_i f_{i,w}}}{1-u_i^{f_{i,w}}}\prod_{(i,w) \not\in I_v(\sigma)}\left(1-u_{i,w}^{m_if_{i,w}}\right) \in \QQ[\mathbf{u}].
\]

Using the identity $\frac{1-x^n}{1-x} = \sum_{i=0}^{n-1}x^i \in \QQ[x],$
we obtain 
\[
N^{\cc}_{\sigma,\mathbf{m},\geom,v}(\mathbf{u}) = \prod_{(i,w) \in I_v(\sigma)}\left(u_{i,w}^{m_if_{i,w}}\sum_{j=0}^{m_i - 1} u_i^{jf_{i,w}}\right) \prod_{(i,w) \not\in I_v(\sigma)}\left(1-u_{i,w}^{m_if_{i,w}}\right) \in \QQ[\mathbf{u}].
\]
Note that the $i$-degree of each monomial summand of $N^{\cc}_{\sigma,\mathbf{m},\geom,v}(\mathbf{u})$ is either $0$ or at least $m_if_{i,w}$ for some $w$; in particular, when it is positive, it is at least $m_i$, and is equal to $m_i$ if and only if either $\sigma = 0$ or $\sigma \in A_{\Sigma,v,i}$.

For $\sigma = 0$, there is $E^{\cc}_{0,\mathbf{m},\geom,v}(\mathbf{u})$ with $\deg_i(E^{\cc}_{0,\mathbf{m},\geom,v}(\mathbf{u})) \geq m_i +1$ such that
\[
N^{\cc}_{0,\mathbf{m},\geom,v}(\mathbf{u}) = \prod_{(i,w) \not\in I_v(0)}\left(1-u_{i,w}^{m_if_{i,w}}\right) = 1 - \sum_{\sigma' \in A_{\Sigma,v}} \prod_{(i,w) \in I_v(\sigma')}u_{i,w}^{m_i} + E^{\dd}_{0,\mathbf{m},\geom,v}(\mathbf{u}).
\]

For $\sigma \in A_{\Sigma,v,i}$, there is $E^{\cc}_{\sigma,\mathbf{m},\geom,v}(\mathbf{u})$ with $\deg_i(E^{\cc}_{\sigma,\mathbf{m},\geom,v}(\mathbf{u})) \geq m_i +1$ such that
\[
\begin{aligned}
N^{\cc}_{\sigma,\mathbf{m},\geom,v}(\mathbf{u}) & = \prod_{(i,w) \in I_v(\sigma)}\left(u_{i,w}^{m_i}\sum_{j=0}^{m_i-1}u_{i,w}^j\right) \prod_{(i,w) \neq (i_0,w_0)}\left(1-u_{i,w}^{m_if_{i,w}}\right) \\
& = \prod_{(i,w) \in I_v(\sigma)}u_{i,w}^{m_i} + E^{\cc}_{\sigma,\mathbf{m},\geom,v}(\mathbf{u}).
\end{aligned}
\]

Then $\deg_i\left(N^{\cc}_{0,\mathbf{m},\geom,v}(\mathbf{u}) + \sum_{\sigma \in A_{\Sigma,v,i}} N^{\cc}_{\sigma,\mathbf{m},\geom,v}(\mathbf{u})\right) \geq m_i + 1$, and we are done.

\item This follows from \cite[Prop.~2.2.3]{BT} upon introducing $v_{i,w}:=u_{i,w}^{m_i}$; we give the proof for clarity.

Set $v_{i,w} = u_{i,w}^{m_i}$ and $\mathbf{v} = (\mathbf{v}_{i,w})_{i,w}$. Clearing denominators, $Q^{\dd}_{\Sigma,\mathbf{m},\geom,v}(\mathbf{u}) = \sum_{\sigma \in G_v} N^{\dd}_{\sigma,\mathbf{m},\geom,v}(\mathbf{v})$,
where
\[
N^{\dd}_{\sigma,\mathbf{m},\geom,v}(\mathbf{v}) = \prod_{(i,w) \in I_v(\sigma)}v_{i,w}^{f_{i,w}}\prod_{(i,w) \not\in I_v(\sigma)}\left(1-v_{i,w}^{f_{i,w}}\right) \in \QQ[\mathbf{u}].
\]
Now, $\deg\left(N^{\dd}_{\sigma,\mathbf{m},\geom,v}\right) = \sum_{(i,w) \in I_v(\sigma)}f_{i,w}$, so $\deg\left(N^{\dd}_{\sigma,\mathbf{m},\geom,v}\right) \leq 1$ iff $\sigma \in A_{\Sigma,v} \cup \{0\}$. Thus, it suffices to show that $\deg\left(N^{\dd}_{0,\mathbf{m},\geom,v}(\mathbf{v}) + \sum_{\sigma \in A_{\Sigma,v}} N^{\dd}_{\sigma,\mathbf{m},\geom,v}(\mathbf{v})\right) \geq 2$.

For $\sigma = 0$, there is $E^{\dd}_{0,\mathbf{m},\geom,v}(\mathbf{v})$ with $\deg\left(E^{\dd}_{0,\mathbf{m},\geom,v}(\mathbf{v})\right) \geq 2$ such that
\[
N^{\dd}_{0,\mathbf{m},\geom,v}(\mathbf{v}) = \prod_{(i,w) \not\in I_v(0)}\left(1-v_{i,w}^{f_{i,w}}\right) = 1 - \sum_{\sigma' \in A_{\Sigma,v}} \prod_{(i,w) \in I_v(\sigma')}v_{i,w} + E^{\dd}_{0,\mathbf{m},\geom,v}(\mathbf{v}).
\]

For $\sigma \in A_{\Sigma,v}$, there is $E^{\dd}_{\sigma,\mathbf{m},\geom,v}(\mathbf{v})$ with $\deg\left(E^{\dd}_{\sigma,\mathbf{m},\geom,v}(\mathbf{v})\right) \geq 2$ such that
\[
N^{\dd}_{\sigma,\mathbf{m},\geom,v}(\mathbf{v}) = \prod_{(i,w) \in I_v(\sigma)}v_{i,w} \prod_{(i,w) \neq (i_0,w_0)}\left(1-v_{i,w}^{f_{i,w}}\right) = \prod_{(i,w) \in I_v(\sigma)}v_{i,w} + E^{\dd}_{\sigma,\mathbf{m},\geom,v}(\mathbf{v}).
\]

Then $\deg\left(N^{\dd}_{0,\mathbf{m},\geom,v}(\mathbf{v}) + \sum_{\sigma \in A_{\Sigma,v}} N^{\dd}_{\sigma,\mathbf{m},\geom,v}(\mathbf{v})\right) \geq 2$, and we are done. \qedhere
\end{enumerate}
\end{proof}

\section{Proof of main results} \label{sec:geomproof}

In this section we prove Theorem~\ref{thm:geom} and Theorem~\ref{thm:camp}, which verifies the modified PSTVA conjecture for smooth toric orbifolds. We also deduce Corollary~\ref{cor:split}.

\subsection{Regularisation}

\begin{proposition} \label{prop:geomreg}
For $\chi \in \left(T(\AA_K)/T(K)\right)^\wedge$ with $\chi(\mathbf{K}_{\m,\geom}^*) = 1$ and all but finitely many places $v$,
\[
\widehat{H}_{\m,v}(\delta_{\m,\geom,v}^*, \chi_v;-\mathbf{s}) = \left(\prod_{i=1}^r \prod_{\substack{w \in \Omega_{K_i} \\ w \mid v}}L_w(\chi_i^{m_i},s_i)\right)Q^*_{\Sigma,\mathbf{m},\geom,v}\left(\frac{\chi_v(\mathbf{n_{i,w}})}{q_v^{\mathbf{s}/\mathbf{m}}}\right).
\]
\end{proposition}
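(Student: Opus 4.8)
The plan is to adapt Batyrev and Tschinkel's computation of local Fourier transforms via the fan, the only new feature being that the semi-integrality condition at $v$ excises the appropriate initial terms from the multidimensional geometric series attached to each cone. First I would restrict to $v$ outside the finite set $S'(\Sigma,\calX)$, so that $v$ is unramified in $E/K$ and $\mathbf{K}_{\m,\geom,v}^* = T(\O_v)$; the hypothesis $\chi(\mathbf{K}_{\m,\geom}^*) = 1$ then forces $\chi_v$ to be trivial on $T(\O_v)$, so that $\chi_v$, the local height $H_{\m,v}(\mathbf{s},\cdot)$ and $\delta_{\m,\geom,v}^*$ all factor through $\deg_{T,v} \colon T(K_v) \to X_*(T_v)$, which is surjective with kernel $T(\O_v)$ by Lemma~\ref{lplem}. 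Unwinding the definitions (using that $\mu_v(T(\O_v)) = 1$ for all but finitely many $v$) would give, in the region of absolute convergence,
\[
\widehat{H}_{\m,v}(\delta_{\m,\geom,v}^*,\chi_v;-\mathbf{s}) = \sum_{n \in X_*(T_v)} \delta_{\m,\geom,v}^*(n)\, \chi_v(n)\, H_{\m,v}(-\mathbf{s},n),
\]
where $\delta_{\m,\geom,v}^*$ and $\chi_v$ now denote the induced functions on $X_*(T_v)$, and $H_{\m,v}(-\mathbf{s},n) = q_v^{-\psi(n)}$ for $\psi := \varphi_{\Sigma,\m}\varphi_{\mathbf{s}}$ the $\Sigma$-piecewise linear function with value $s_i/m_i$ on $\Sigma_i(1)$.

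Next I would decompose $X_*(T_v)$ using the fan. Since $\Sigma$ is complete, each $n \in X_*(T_v)$ lies in the relative interior of a unique cone, which is $G_v$-stable because $n$ is $G_v$-fixed, so $X_*(T_v) = \bigsqcup_{\sigma \in \Sigma^{G_v}}(\operatorname{relint}\sigma \cap X_*(T_v))$. Regularity of $\Sigma$ makes the generators $e_j$ of $\sigma$ part of a $\ZZ$-basis of $N$, so a $G_v$-fixed point of $\operatorname{relint}\sigma$ has coordinates constant along each orbit $\Sigma_{i,w}(1) \subset \sigma(1)$ and equals $\sum_{(i,w) \in I_v(\sigma)}\alpha_{i,w}\overline{e}_{i,w}$ for unique $\alpha_{i,w} \in \ZZ_{\geq 1}$, with $\overline{e}_{i,w} = \sum_{e_j \in \Sigma_{i,w}(1)}e_j$. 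On such an $n$, linearity of $\psi$ on $\sigma$ together with $\#\Sigma_{i,w}(1) = f_{i,w}$ gives $\psi(n) = \sum_{(i,w) \in I_v(\sigma)}\alpha_{i,w}f_{i,w}s_i/m_i$, while multiplicativity of $\chi_v$ gives $\chi_v(n) = \prod_{(i,w)}\chi_v(\overline{e}_{i,w})^{\alpha_{i,w}}$.

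I would then invoke Corollary~\ref{cor:maincor}: on $\operatorname{relint}\sigma$ the function $\delta_{\m,\geom,v}^{\cc}$ is $1$ exactly when $\alpha_{i,w} \geq m_i$ for all $(i,w) \in I_v(\sigma)$, and $\delta_{\m,\geom,v}^{\dd}$ exactly when $m_i \mid \alpha_{i,w}$ for all such $(i,w)$. Hence the contribution of each $\sigma$ factors over $(i,w) \in I_v(\sigma)$ into one-variable geometric series $\sum_{\alpha \geq m_i}(\,\cdot\,)^{\alpha}$ (Campana) or $\sum_{k \geq 1}(\,\cdot\,)^{m_i k}$ (Darmon) — this is exactly the "excised" series of the Note in Section~\ref{sec:fan}, which is why the fan functions $R^{*}_{\sigma,\m,\geom,v}$ were defined as they were. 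Introducing $u_{i,w}$ with $u_{i,w}^{f_{i,w}} = \chi_v(\overline{e}_{i,w})\,q_v^{-f_{i,w}s_i/m_i}$ (only $u_{i,w}^{f_{i,w}}$ enters $R^{*}_{\sigma,\m,\geom,v}$ and $Q^{*}_{\Sigma,\m,\geom,v}$, so the choice of root is harmless, and in the notation of the character correspondence this is $\chi_v(\mathbf{n}_{i,w})/q_v^{s_i/m_i}$), these series sum to the factors of $R^{\cc}_{\sigma,\m,\geom,v}(\mathbf{u})$, respectively $R^{\dd}_{\sigma,\m,\geom,v}(\mathbf{u})$. Summing over $\sigma \in \Sigma^{G_v}$ and applying the defining identity $\sum_\sigma R^{*}_{\sigma,\m,\geom,v}(\mathbf{u}) = Q^{*}_{\Sigma,\m,\geom,v}(\mathbf{u})\prod_{i,w}(1-u_{i,w}^{m_if_{i,w}})^{-1}$ then produces $\widehat{H}_{\m,v}(\delta_{\m,\geom,v}^*,\chi_v;-\mathbf{s}) = Q^{*}_{\Sigma,\m,\geom,v}(\mathbf{u})\prod_{i,w}(1-u_{i,w}^{m_if_{i,w}})^{-1}$.

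Finally I would identify $\prod_{i,w}(1-u_{i,w}^{m_if_{i,w}})^{-1}$ with $\prod_{i=1}^{r}\prod_{w \mid v}L_w(\chi_i^{m_i},s_i)$: via the character correspondence of Section~\ref{sec:har}, compatibility of the degree maps with $\gamma \colon \prod_i R_{K_i/K}\mathbb{G}_m \to T$ identifies $\chi_v(\overline{e}_{i,w})$ with $\chi_{i,w}(\pi_w)$, so $u_{i,w}^{m_if_{i,w}} = \chi_{i,w}(\pi_w)^{m_i}q_v^{-f_{i,w}s_i} = \chi_{i,w}(\pi_w)^{m_i}q_w^{-s_i}$ (as $q_w = q_v^{f_{i,w}}$), whence $(1-u_{i,w}^{m_if_{i,w}})^{-1} = L_w(\chi_i^{m_i},s_i)$ — all $\chi_i$ being unramified at $w \mid v$ for $v$ outside the finite exceptional set. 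I expect the real work to be concentrated in this last identification: one must keep the inertia degree $f_{i,w}$ consistent across the height exponent, the exponents appearing in $R^{*}$ and $Q^{*}$, and the normalisations of $\deg$ and of $\mu_v$, and pin down the elements $\mathbf{n}_{i,w}$ so that $u_{i,w} = \chi_v(\mathbf{n}_{i,w})/q_v^{s_i/m_i}$ is precisely the $f_{i,w}$-th root relation above. The only other delicate point is combinatorial — the cone decomposition must be taken via relative interiors, so that lattice points on shared faces of cones are counted once, which is exactly what makes the per-cone series factor cleanly.
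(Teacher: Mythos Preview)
Your proposal is correct and follows essentially the same route as the paper: reduce to a sum over $X_*(T_v)$ using $\mu_v(T(\O_v))=1$ and $T(K_v)/T(\O_v)\cong X_*(T_v)$ at good places, partition into relative interiors of $G_v$-invariant cones, write each lattice point as $\sum_{(i,w)\in I_v(\sigma)}\alpha_{i,w}\overline{e}_{i,w}$, invoke Corollary~\ref{cor:maincor} to identify the surviving $\alpha_{i,w}$, recognise the per-cone contribution as $R^*_{\sigma,\m,\geom,v}$ evaluated at $\chi_v(\mathbf{n}_{i,w})/q_v^{s_i/m_i}$, and sum. Your write-up is in fact more explicit than the paper's on the identification $\prod_{i,w}(1-u_{i,w}^{m_if_{i,w}})^{-1}=\prod_{i}\prod_{w\mid v}L_w(\chi_i^{m_i},s_i)$, which the paper leaves implicit in ``the result follows from the definition of $Q^*_{\Sigma,\mathbf{m},\geom,v}$''.
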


\begin{proof}
We adapt the proof of \cite[Thm.~2.2.6]{BT}. For all but finitely many $v$, we have $\mu_v(T(\O_v)) = 1$ and $T(K_v)/T(\mathcal{O}_v) \cong X_*(T_v)$, thus we may view $H_{\m,v}(\mathbf{s},\cdot)$, $\delta_{\m,\strong,v}^*$ and $\chi_v$ as functions on $X_*(T_v)$, and we have 
\[
\widehat{H}_{\m,v}(\delta_{\m,\geom,v}^*, \chi_v;-\mathbf{s}) = \sum_{n_v \in X_*(T_v)}\delta_{\m,\geom,v}^*(n_v)\frac{\chi_v(n_v)}{H_{\m,v}(\mathbf{s},n_v)}.
\]

We partition the space $X_*(T_v)$ into the relative interiors of cones $\sigma \in \Sigma^{G_v}$, obtaining
\[
\widehat{H}_{\m,v}(\delta_{\m,\geom,v}^*, \chi_v;-\mathbf{s}) = \sum_{\sigma \in \Sigma^{G_v}}\left(\sum_{n_v \in X_*(T_v) \cap \sigma^{\circ}}\delta_{\m,\geom,v}^*(n_v)\frac{\chi_v(n_v)}{H_{\m,v}(\mathbf{s},n_v)}\right).
\]

For $\sigma \in \Sigma^{G_v}$ and $n_v \in \sigma$, we may write $n_v = \sum_{(i,w) \in I_v(\sigma)}\alpha_{i,w}\overline{e}_{i,w}$ where $\alpha_{i,w} \in \ZZ_{> 0}$. By Corollary \ref{cor:maincor}, $\delta_{\m,\geom,v}^*(n_v) = 1$ iff $\alpha_{i,w} \in  \mathbb{Z}_{\geq m_i} \cup \{0\}$ for all $(i,w) \in I_v(\sigma)$ (when $* = \cc$) or $m_i \mid \alpha_{i,w}$ for all $(i,w) \in I_v(\sigma)$ (when $* = \dd$).

Pick a representative $n_{i,w}$ for each $\Sigma_{i,w}(1)$. Then

\[
\sum_{n_v \in X_*(T_v) \cap \sigma^{\circ}}\delta_{\m,\geom,v}^*(n_v)\frac{\chi_v(n_v)}{H_{\m,v}(\mathbf{s},n_v)} = R^*_{\sigma,\mathbf{m},\geom,v}\left(\frac{\chi_v(\mathbf{n}_{i,w})}{q_v^{s_i/m_i}}\right),
\]
and the result follows from the definition of $Q^*_{\Sigma,\mathbf{m},\geom,v}$.
\end{proof}

\begin{corollary} \label{cor:campreg}
For $\chi$ as in Proposition~\ref{prop:geomreg}, we have
\[
\widehat{H}_{\m}\left(\delta^*_{\m,\geom},\chi;-\mathbf{s}\right) = \prod_{i=1}^r L(\chi_i^{m_i},s_i)G^*_{\m,\geom}(\chi,\mathbf{s}),
\]
where $G^*_{\m,\geom}(\chi,\mathbf{s})$ is holomorphic and uniformly bounded with respect to $\chi$ on $\Re(\mathbf{s}) \geq \frac{\mathbf{m}}{\mathbf{m}+\mathbf{1}}$ and $G^*_{\m,\geom}(1,\mathbf{1}) \neq 0$.
\end{corollary}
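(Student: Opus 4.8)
The plan is to realise the global Fourier transform as a product of local ones and isolate its polar part. Writing $\widehat{H}_{\m}(\delta^*_{\m,\geom},\chi;-\mathbf{s}) = \prod_{v \in \Omega_K}\widehat{H}_{\m,v}(\delta^*_{\m,\geom,v},\chi_v;-\mathbf{s})$, I fix a finite set $S_0 \supseteq S'(\Sigma,\calX) \cup \Omega_K^\infty$ large enough that the conclusion of Proposition~\ref{prop:geomreg} holds at every $v \notin S_0$; crucially $S_0$ may be chosen independently of $\chi$, since for $v \notin S(\Sigma,\calX)$ one has $\mathbf{K}^*_{\m,\geom,v} = T(\O_v)$, so the hypothesis $\chi(\mathbf{K}^*_{\m,\geom}) = 1$ already forces $\chi_v$ to be trivial on $T(\O_v)$. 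Let $\chi_1,\dots,\chi_r$ be the Hecke characters over $K_1,\dots,K_r$ associated to $\chi$. Completing the partial Euler products appearing in Proposition~\ref{prop:geomreg} to the full Hecke $L$-functions $\prod_{i=1}^r L(\chi_i^{m_i},s_i)$, we obtain
\[
G^*_{\m,\geom}(\chi,\mathbf{s}) = \underbrace{\prod_{v \in S_0}\widehat{H}_{\m,v}(\delta^*_{\m,\geom,v},\chi_v;-\mathbf{s})}_{(\mathrm{I})}\cdot\underbrace{\prod_{i=1}^r\prod_{\substack{w \in \Omega_{K_i} \\ w \mid v \in S_0}}L_w(\chi_i^{m_i},s_i)^{-1}}_{(\mathrm{II})}\cdot\underbrace{\prod_{v \notin S_0}Q^*_{\Sigma,\mathbf{m},\geom,v}\!\left(\tfrac{\chi_v(\mathbf{n}_{i,w})}{q_v^{\mathbf{s}/\mathbf{m}}}\right)}_{(\mathrm{III})},
\]
so it remains to establish holomorphy, uniform boundedness in $\chi$, and non-vanishing at $(1,\mathbf{1})$, for each of $(\mathrm{I})$, $(\mathrm{II})$, $(\mathrm{III})$.

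Term $(\mathrm{I})$ is a finite product, each factor of which is holomorphic and uniformly bounded in $\chi_v$ on $\Re(\mathbf{s}) \geq \eps$ for every $\eps > 0$ by Proposition~\ref{prop:lftsarenice}, hence on $\Re(\mathbf{s}) \geq \frac{\mathbf{m}}{\mathbf{m}+\mathbf{1}}$, all of whose coordinates are at least $\tfrac12$. Term $(\mathrm{II})$ is a finite product whose factors equal $1$ (at places where $\chi_i$ ramifies) or $1 - \chi_{i,w}^{m_i}(\pi_w)q_w^{-s_i}$; these are entire in $\mathbf{s}$ and bounded by $2$ for $\Re(s_i) \geq 0$, uniformly in $\chi$.

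The heart of the matter is term $(\mathrm{III})$. By Proposition~\ref{prop:fanfuncgeom} (and its proof), $Q^*_{\Sigma,\mathbf{m},\geom,v}(\mathbf{u}) = 1 + P^*_{\Sigma,\mathbf{m},\geom,v}(\mathbf{u})$, where, when $* = \cc$, every monomial of $P^*$ has $i$-degree either $0$ or at least $m_i + 1$ in the variables $u_{i,\bullet}$ for each $i$, and, when $* = \dd$, every monomial has total degree at least $2$ in the variables $\mathbf{u}^{\mathbf{m}}$. Substituting $u_{i,w} \mapsto \chi_v(\mathbf{n}_{i,w})q_v^{-s_i/m_i}$, so that $|u_{i,w}| = q_v^{-\Re(s_i)/m_i}$, these degree bounds yield, for each $\boldsymbol{\delta} > 0$, an estimate $\big|P^*_{\Sigma,\mathbf{m},\geom,v}(\cdots)\big| \ll_{\boldsymbol{\delta}} q_v^{-1-\eta}$ for some $\eta = \eta(\boldsymbol{\delta}) > 0$ on $\Re(\mathbf{s}) \geq \frac{\mathbf{m}}{\mathbf{m}+\mathbf{1}} + \boldsymbol{\delta}$, with implied constant uniform in $\chi$ (the fan polynomials have a bounded number of monomials with bounded coefficients, and the substituted variables have absolute value bounded independently of $\chi$). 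Since $\sum_v q_v^{-1-\eta} < \infty$, the product $(\mathrm{III})$ converges absolutely and locally uniformly, hence is holomorphic and uniformly bounded in $\chi$, on $\Re(\mathbf{s}) > \frac{\mathbf{m}}{\mathbf{m}+\mathbf{1}}$. To reach the closed region one treats the finitely many borderline monomials — those of $i$-degree exactly $m_i + 1$, respectively of $\mathbf{u}^{\mathbf{m}}$-degree exactly $2$, which contribute terms of the shape $c\,\chi_v(\mathbf{n}_{i,w})^{k} q_v^{-k s_i/m_i}$ whose exponent has real part $1$ on the boundary — by extracting from $(\mathrm{III})$ the corresponding Hecke $L$-functions $L(\chi_i^{k},\cdot)$, which by Proposition~\ref{prop:cont} are holomorphic on $\Re(s) \geq 1$ whenever $\chi_i^{k}$ is non-principal, the principal case being absorbed into the matching main factor, and then estimating the remainder as before. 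This bookkeeping of the leading non-constant terms of the fan functions and their reassembly into convergent Hecke $L$-functions up to the boundary of the region is the step I expect to be the main obstacle.

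Finally, for non-vanishing at $(\chi,\mathbf{s}) = (1,\mathbf{1})$: each factor of $(\mathrm{I})$ is $\widehat{H}_{\m,v}(\delta^*_{\m,\geom,v},1;-\mathbf{1})$, a strictly positive real number by Proposition~\ref{prop:nonvan}; each factor of $(\mathrm{II})$ equals $1 - q_w^{-1} \in (0,1)$; and specialising Proposition~\ref{prop:geomreg} at $\chi = 1$, $\mathbf{s} = \mathbf{1}$ gives $\widehat{H}_{\m,v}(\delta^*_{\m,\geom,v},1;-\mathbf{1}) = \big(\prod_{i}\prod_{w\mid v}(1-q_w^{-1})^{-1}\big)\,Q^*_{\Sigma,\mathbf{m},\geom,v}(q_v^{-1/m_i})$, so positivity of the left-hand side and of the Hecke factors forces $Q^*_{\Sigma,\mathbf{m},\geom,v}(q_v^{-1/m_i}) > 0$ for all $v \notin S_0$, whence $(\mathrm{III})$ is a convergent product of positive reals. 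Therefore $G^*_{\m,\geom}(1,\mathbf{1}) > 0$.
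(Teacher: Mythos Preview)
Your approach is the same as the paper's: the proof there is the single sentence ``The result now follows from Propositions~\ref{prop:geomreg}, \ref{prop:fanfuncgeom}, \ref{prop:hecbd} and \ref{prop:nonvan}'', and your factors $(\mathrm{I})$, $(\mathrm{II})$, $(\mathrm{III})$ together with your non-vanishing argument are exactly the unpacking of that citation.

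The one place you diverge is the extension to the closed boundary $\Re(\mathbf{s}) = \frac{\mathbf{m}}{\mathbf{m}+\mathbf{1}}$. Your proposed fix --- extracting further Hecke $L$-functions $L(\chi_i^{k},\cdot)$ from the borderline monomials --- does not work as written: there is no ``matching main factor'' into which a principal $\chi_i^{k}$ with $k\neq m_i$ can be absorbed, and such factors would introduce new poles rather than remove them. More to the point, this step is not needed. The degree bounds of Proposition~\ref{prop:fanfuncgeom} already give $|P^*_{\Sigma,\mathbf{m},\geom,v}(\cdots)| \ll q_v^{-1-\eta(\boldsymbol{\delta})}$ uniformly in $\chi$ on every region $\Re(\mathbf{s}) \geq \frac{\mathbf{m}}{\mathbf{m}+\mathbf{1}} + \boldsymbol{\delta}$, and since $\frac{m_i}{m_i+1} < 1$ for every $i$, the point $\mathbf{s} = \mathbf{1}$ and any compact neighbourhood of it lie strictly inside this range. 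That is all the downstream arguments (Poisson summation, \cite[Thm.~5.7]{BOU}, the Tauberian theorem) require; the paper makes no finer use of the closed boundary. So you should simply drop the boundary paragraph and state the conclusion on $\Re(\mathbf{s}) \geq \frac{\mathbf{m}}{\mathbf{m}+\mathbf{1}} + \boldsymbol{\delta}$ for arbitrary $\boldsymbol{\delta} > 0$, or equivalently on the open region.
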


\begin{proof}
The result now follows from Propositions~\ref{prop:geomreg}, \ref{prop:fanfuncgeom}, \ref{prop:hecbd} and \ref{prop:nonvan}.
\end{proof}

\begin{corollary}
If $(X_{\Sigma},D_{\Sigma,\m})$ is smooth, then, for $\chi$ as in Proposition~\ref{prop:geomreg}, we have 
\[
\widehat{H}_{\m}\left(\delta^*_{\m},\chi;-\mathbf{s}\right) = \prod_{i=1}^r L(\chi_i^{m_i},s_i)G^*_{\m}(\chi,\mathbf{s}),
\]
where $G^*_{\m}(\chi,\mathbf{s})$ is holomorphic and uniformly bounded with respect to $\chi$ on $\Re(\mathbf{s}) \geq \frac{\mathbf{m}}{\mathbf{m}+\mathbf{1}}$ and $G^*_{\m}(1,\mathbf{1}) \neq 0$.
\end{corollary}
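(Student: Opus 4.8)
The plan is to deduce this from Corollary~\ref{cor:campreg} by comparing the ordinary indicator $\delta^*_{\m}$ with the geometric indicator $\delta^*_{\m,\geom}$ place by place: in the smooth case these agree at all but finitely many places, so the two global Fourier transforms differ by only finitely many local factors. Concretely, since $(X_{\Sigma},D_{\Sigma,\m})$ is smooth its irreducible components $D_i$ are smooth, so Corollary~\ref{cor:red} supplies a finite set $S'' \supset S(\Sigma,\calX)$ of places — which we may also take to contain the places ramified in $E/K$ and to be large enough that Proposition~\ref{prop:geomreg} applies outside it — such that $(\calX,\calD)^*(\O_v) = (\calX,\calD)^*_{\geom}(\O_v)$, and hence $\delta^*_{\m,v} = \delta^*_{\m,\geom,v}$ and $\widehat{H}_{\m,v}(\delta^*_{\m,v},\chi_v;-\mathbf{s}) = \widehat{H}_{\m,v}(\delta^*_{\m,\geom,v},\chi_v;-\mathbf{s})$, for all $v \notin S''$. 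Replacing each $\mathbf{K}^*_{\m,v}$ by $\mathbf{K}^*_{\m,v}\cap\mathbf{K}^*_{\m,\geom,v}$ — harmless, as all earlier statements hold for any sufficiently small compact open subgroup of $T(\O_v)$ — we may further assume $\mathbf{K}^*_{\m,v} = \mathbf{K}^*_{\m,\geom,v}$ for every $v$, so that the hypothesis $\chi(\mathbf{K}^*_{\m,\geom}) = 1$ of Proposition~\ref{prop:geomreg} is exactly $\chi(\mathbf{K}^*_{\m}) = 1$.

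Granting this, I would factor the global transform over $S''$ and its complement, use $\delta^*_{\m,v} = \delta^*_{\m,\geom,v}$ off $S''$, evaluate the factors with $v \notin S''$ by Proposition~\ref{prop:geomreg}, and complete the resulting partial Euler products of local Hecke factors to the full functions $L(\chi_i^{m_i},s_i)$ by inserting and then removing the finitely many missing local factors $L_w(\chi_i^{m_i},s_i)$ with $w \mid v \in S''$. This yields $\widehat{H}_{\m}(\delta^*_{\m},\chi;-\mathbf{s}) = \prod_{i=1}^r L(\chi_i^{m_i},s_i)\, G^*_{\m}(\chi,\mathbf{s})$ with
\[
G^*_{\m}(\chi,\mathbf{s}) = \left(\prod_{v\in S''}\widehat{H}_{\m,v}(\delta^*_{\m,v},\chi_v;-\mathbf{s})\right)\left(\prod_{i=1}^r\prod_{v\in S''}\prod_{\substack{w\in\Omega_{K_i}\\ w\mid v}}L_w(\chi_i^{m_i},s_i)^{-1}\right)\prod_{v\notin S''}Q^*_{\Sigma,\m,\geom,v}\!\left(\frac{\chi_v(\mathbf{n}_{i,w})}{q_v^{\mathbf{s}/\mathbf{m}}}\right).
\]

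It then remains to read off the properties of $G^*_{\m}$, each of which follows as in the proof of Corollary~\ref{cor:campreg} together with control of the finitely many extra factors indexed by $S''$. The infinite product $\prod_{v\notin S''}Q^*_{\Sigma,\m,\geom,v}(\cdots)$ converges absolutely to a holomorphic function, uniformly bounded in $\chi$, on $\Re(\mathbf{s})\geq\frac{\mathbf{m}}{\mathbf{m}+\mathbf{1}}$ by Proposition~\ref{prop:fanfuncgeom}; the finitely many factors $\widehat{H}_{\m,v}(\delta^*_{\m,v},\chi_v;-\mathbf{s})$ with $v \in S''$ are holomorphic and uniformly bounded there by Proposition~\ref{prop:lftsarenice}; and the finitely many inverse local factors $L_w(\chi_i^{m_i},s_i)^{-1}$, equal to $1$ or $1-\chi_{i,w}^{m_i}(\pi_w)q_w^{-s_i}$, are entire and bounded uniformly in $\chi$ on $\Re(s_i)\geq\frac{m_i}{m_i+1}>0$, with modulus at least $1-q_w^{-m_i/(m_i+1)}>0$. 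At $(\chi,\mathbf{s})=(1,\mathbf{1})$ all three pieces are strictly positive — the $\widehat{H}_{\m,v}(\delta^*_{\m,v},1;-\mathbf{1})$ by Proposition~\ref{prop:nonvan}, each $Q^*_{\Sigma,\m,\geom,v}$ there since it is a positive factor of $\widehat{H}_{\m,v}(\delta^*_{\m,\geom,v},1;-\mathbf{1})>0$ (Proposition~\ref{prop:nonvan}, as in Corollary~\ref{cor:campreg}), and each $1-\chi_{i,w}^{m_i}(\pi_w)q_w^{-1}$ since $q_w\geq 2$ — whence $G^*_{\m}(1,\mathbf{1})\neq 0$. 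I do not expect a genuine obstacle here: the sole conceptual input beyond Corollary~\ref{cor:campreg} is Corollary~\ref{cor:red}, which forces $\delta^*_{\m}$ and $\delta^*_{\m,\geom}$ to agree outside a finite set, and the remaining work is the bookkeeping of the finitely many local $L$-factors at the places of $S''$ and the routine check, via Propositions~\ref{prop:lftsarenice} and \ref{prop:nonvan}, that these finitely many factors affect neither holomorphy, nor the uniform bound in $\chi$, nor non-vanishing at $(1,\mathbf{1})$.
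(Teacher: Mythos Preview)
Your proposal is correct and follows essentially the same approach as the paper: the paper's proof simply observes that smoothness forces $\widehat{H}_{\m,v}(\delta^*_{\m,\geom,v},\chi_v;-\mathbf{s}) = \widehat{H}_{\m,v}(\delta^*_{\m,v},\chi_v;-\mathbf{s})$ for all but finitely many $v$ (implicitly via Corollary~\ref{cor:red}) and then cites Propositions~\ref{prop:geomreg}, \ref{prop:fanfuncgeom} and \ref{prop:nonvan}. You have spelled out the bookkeeping for the finitely many exceptional places (including the appeal to Proposition~\ref{prop:lftsarenice} and the harmonisation of the groups $\mathbf{K}^*_{\m}$ and $\mathbf{K}^*_{\m,\geom}$) in more detail than the paper does, but the underlying argument is identical.
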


\begin{proof}
For all but finitely many $v$, we have $\widehat{H}_{\m,v}\left(\delta^*_{\m,\geom,v},\chi_v;-\mathbf{s}\right) = \widehat{H}_{\m}\left(\delta^*_{\m,v},\chi_v;-\mathbf{s}\right)$, thus the result follows from Propositions~\ref{prop:geomreg}, \ref{prop:fanfuncgeom} and \ref{prop:nonvan}.
\end{proof}

\subsection{Poisson summation formula}

Henceforth, set $t:= \rank X^*(T)$.

\begin{proposition}
For $\Re(\mathbf{s}) > 1$, we have
\begin{equation} \label{eq:psf2}
Z_{\m,\geom}^*(\mathbf{s}) = \f{1}{\left(2\pi\right)^t\vol(T(\AA_K)^1/T(K))}\int_{\chi \in (T(\AA_K)/ \mathbf{K}_{\m,\geom}^* \cdot T(K))^{\wedge}}\widehat{H}_{\m}(\delta_{\m,\geom}^*, \chi; -\mathbf{s})d\mu.
\end{equation} 
\end{proposition}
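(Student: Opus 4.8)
The asserted identity is exactly the conclusion of the Poisson summation formula, Proposition~\ref{prop:psf}, so the task is to verify its two integrability hypotheses in the range $\Re(\mathbf{s}) > 1$: that $P \mapsto \delta^*_{\m,\geom}(P)/H_\m(\mathbf{s},P)$ is $L^1$ on $T(\AA_K)$, and that $\chi \mapsto \widehat{H}_\m(\delta^*_{\m,\geom},\chi;-\mathbf{s})$ is $L^1$ on $T(\AA_K)/\mathbf{K}^*_{\m,\geom}T(K)$. This runs parallel to \cite[\S2.2]{BT}, with the regularised Fourier transform of Corollary~\ref{cor:campreg} playing the role of their analogous formula.

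For the first hypothesis, since $\delta^*_{\m,\geom}$ and $H_\m$ take non-negative real values and $\mu = \prod_v\mu_v$, the $L^1$-norm of this function equals $\widehat{H}_\m(\delta^*_{\m,\geom},1;-\sigma)$ with $\sigma := \Re(\mathbf{s})$, using $|H_\m(\mathbf{s},t)| = H_\m(\sigma,t)$. By Corollary~\ref{cor:campreg} this is $\prod_{i=1}^r\zeta_{K_i}(\sigma_i)\cdot G^*_{\m,\geom}(1,\sigma)$; each $\zeta_{K_i}$ is finite away from its pole at $1$ and $G^*_{\m,\geom}(1,\cdot)$ is holomorphic on $\Re(\mathbf{s})\geq\mathbf{m}/(\mathbf{m}+\mathbf{1})$, so this is finite for $\Re(\mathbf{s}) > 1$. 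Unravelled, this amounts to Proposition~\ref{prop:geomreg}, which identifies the good local factors as Dedekind Euler factors times the fan functions $Q^*_{\Sigma,\m,\geom,v}$, together with the high-multidegree bound of Proposition~\ref{prop:fanfuncgeom} forcing $\prod_vQ^*_{\Sigma,\m,\geom,v}$ to converge, and Proposition~\ref{prop:lftsarenice} for finiteness of the remaining finitely many factors.

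For the second hypothesis, by Corollary~\ref{cor:nr} the integrand is supported on characters trivial on $\mathbf{K}^*_{\m,\geom}$, so it suffices to bound the integral over $(T(\AA_K)/\mathbf{K}^*_{\m,\geom}T(K))^\wedge$. Since $T(K) \subseteq T(\AA_K)^1$ by the product formula and $\mathbf{K}^*_{\m,\geom} \subseteq \mathbf{K}_T \subseteq T(\AA_K)^1$, Lemma~\ref{lplem}\ref{lplem3} gives
\[
T(\AA_K)/\mathbf{K}^*_{\m,\geom}T(K) \cong \bigl(T(\AA_K)^1/\mathbf{K}^*_{\m,\geom}T(K)\bigr)\times X_*(T)_\RR,
\]
where the first factor is a quotient of the compact group $T(\AA_K)^1/T(K)$ by an open subgroup, hence finite; its Pontryagin dual is thus finite, while that of the second factor is $X^*(T)_\RR\cong\RR^t$. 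The character integral then becomes a finite sum, over representatives of this finite group of characters, of integrals over $\RR^t$. On each, Corollary~\ref{cor:campreg} gives $\widehat{H}_\m(\delta^*_{\m,\geom},\chi;-\mathbf{s}) = \prod_iL(\chi_i^{m_i},s_i)\,G^*_{\m,\geom}(\chi,\mathbf{s})$, and for $\Re(\mathbf{s}) > 1$ the Euler product bound $|L(\chi_i^{m_i},s_i)|\leq\zeta_{K_i}(\Re(s_i))$ makes the $L$-factors uniformly bounded. What remains is integrability over the $\RR^t$-parameter, which follows from rapid decay of $G^*_{\m,\geom}$ in that parameter; this decay is not packaged in Corollary~\ref{cor:campreg} but is contributed by the archimedean local Fourier transforms $\widehat{H}_{\m,v}(1,\chi_v;-\mathbf{s})$, $v\mid\infty$ — smooth oscillatory integrals of $H_{\m,v}(\mathbf{s},\cdot)^{-1}$ that are rapidly decreasing in the frequency of $\chi_v$ — and is extracted as in \cite[\S2.3]{BT}.

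The main obstacle is precisely this last point. The stated corollaries give only \emph{uniform boundedness} of the Fourier transform over the character group, which is insufficient for $L^1$-ness on the non-compact group $T(\AA_K)/\mathbf{K}^*_{\m,\geom}T(K)$; one genuinely needs the archimedean decay, together with the interplay between the $\RR^t$-family of characters and the infinity types of the associated Hecke characters $\chi_i$, to close the argument. The first hypothesis, by contrast, is essentially immediate from the regularisation already carried out in Corollary~\ref{cor:campreg}.
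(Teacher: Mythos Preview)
Your proposal is correct and follows essentially the same route as the paper: reduce to the two integrability hypotheses of Proposition~\ref{prop:psf}, obtain the first from the regularisation (Proposition~\ref{prop:geomreg}/Corollary~\ref{cor:campreg}), and for the second defer the archimedean decay to Batyrev--Tschinkel. The paper is terser, invoking \cite[Cor.~4.6]{BT2} as a black box for the entire second hypothesis (noting that its input is precisely the $L$-function regularisation), whereas you unpack the character group as (finite)$\times\RR^t$ and isolate the archimedean decay from \cite[\S2.3]{BT} as the genuine content; these are two packagings of the same Batyrev--Tschinkel argument.
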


\begin{proof}
By Proposition~\ref{prop:psf}, it suffices to show that: 
\begin{enumerate}
    \item the function $P \mapsto \frac{\delta_{\m,\geom}^*(P)}{H_\m(\mathbf{s},P)}$ is $L^1$ on $T(\A_K)$, and
    \item the integral on the right-hand side of (\ref{eq:psf2}) is absolutely convergent.
\end{enumerate}
The first requirement follows directly from Proposition~\ref{prop:geomreg}.

The second requirement follows from a technical result of Batyrev and Tschinkel \cite[Cor.~4.6]{BT2}, which makes use only of the fact that the regularisations are products of $L$-functions, thus uniformly bounded in compact subsets of $\Re(\mathbf{s}) > 1$; Proposition~\ref{prop:geomreg} and Corollary~\ref{cor:campreg} tell us that the same is true here.
\end{proof}

We are now ready to prove Theorem~\ref{thm:geom} and deduce Theorem~\ref{thm:camp} and Corollary~\ref{cor:split}.

\begin{proof}[Proof of Theorem~\ref{thm:geom}]
By the product formula, $T(K) \subset T(\mathbb{A}_K)^1$ \cite[p.~28]{BOU}, thus we have a morphism $T(\A_K)/T(K) \rightarrow T(\A_K)/T(\A_K)^1$. Non-canonically splitting
\[
0 \rightarrow T(\A_K)^1 \rightarrow T(\A_K) \rightarrow T(\A_K)/T(\A_K)^1 \rightarrow 0,
\]
we thus obtain a splitting $T(\A_K)/T(\A_K)^1 \rightarrow T(\A_K)/T(K)$.

This splitting gives a splitting of the short exact sequence
\[
0 \rightarrow T(\A_K)^1/T(K) \rightarrow T(\A_K)/T(K) \rightarrow T(\A_K)/T(\A_K)^1 \rightarrow 0,
\]
and so we obtain a non-canonical splitting of automorphic characters
\[
(T(\A_K)/T(K))^\wedge \xrightarrow{\sim} (T(\A_K)/T(\A_K)^1)^\wedge \times (T(\A_K)^1/T(K))^\wedge, \quad \chi \mapsto (\chi_y,\chi_l). 
\]
Following \cite[p.~109]{BOU}, denote by $\widetilde{\calU_T}$ the subgroup of $(T(\A_K)/T(K))^\wedge$ identified with the factor $(T(\A_K)^1/T(K))^\wedge$, and denote by $\calU_{T,\m}^*$ (respectively, $\calU_{T,\m,\geom}^*$) the image of $\left(T(\A_K)/\mathbf{K}_{\m}^* T(K)\right)^\wedge$ (respectively, $\left(T(\A_K)/\mathbf{K}_{\m,\geom}^* T(K)\right)^\wedge$) in $\widetilde{\calU_T}$ via this isomorphism. 

Using the splitting of characters discussed above, the isomorphism $X^*(T)_\mathbb{R} \cong \mathbb{R}^t$ and the correct Haar measures throughout, \eqref{eq:psf2} may be further re-expressed as

\begin{equation} \label{eq:psf3}
Z_{\m,\geom}^*(\mathbf{s}) = \frac{1}{(2\pi)^t\vol(T(\A_K)^1/T(K))}\int_{\mathbf{y} \in \R^t}\left( \sum_{\chi \in \calU_{T,\m,\geom}^*}\widehat{H}(\delta_{\m,\geom}^*, \chi; -\mathbf{m}^{-1} \mathbf{s} - i \gamma_\R(\mathbf{y}))\right)d\mathbf{y}.
\end{equation} 

Integrals of the form \eqref{eq:psf3} are the subject of a technical result in Bourqui's exposition \cite[Thm.~5.7]{BOU} due originally to Chambert-Loir and Tschinkel; an application of this theorem to \eqref{eq:psf3} means that $Z_{\m,\geom}^*(s)(s-1)^{\rank(\Pic(X_{\Sigma}))}$ can be extended to a holomorphic function on $\Re(s) \geq 1$ with value at $s = 1$ equal to
\[\frac{C\left|H^1(G,X^*(\overline{T}))\right|}{\vol(T(\A_K)^1/T(K))}\alpha(X_{\Sigma},-K_{X_\Sigma}), \quad
C := \lim_{s \rightarrow 1}(s-1)^{\rank(\Pic(X_{\Sigma}))}\sum_{\chi \in \calU_{T,\m,\geom}^*}\widehat{H}_{\m}(\delta_{\m,\geom}^*,\chi;-s).
\]
To conclude the proof via the Tauberian theorem (Theorem~\ref{thm:taub}), it suffices to show that this application is valid and to verify that $C$ is non-zero and agrees with Conjecture~\ref{conj:CLTBT}.

That the application of \cite[Thm.~5.7]{BOU} is valid follows directly from the fact that, as in the rational points case, the regularisation is a product of $L$-functions. 

In order to calculate $C$, we need only consider the subsum over \emph{contributing characters}, i.e.\ those characters for which every associated Hecke character $\chi_i$ is $m_i$-torsion, so that the associated regularisation is composed entirely of Dedekind zeta functions, i.e.\
\[
\calV_{T,\m,\geom}^* := \{\chi \in \calU_{T,\m,\geom}^*: \chi_i^{m_i} =1 \text{ for all $i=1,\dots,r$}\} \subset \calU_{T,\m,\geom}^*.
\]

When $\m = \mathbf{1}$ (the rational points case), we have$\left|\calV^*_{T,\mathbf{1},\geom}\right| = |A(T)| = \frac{\beta(X_{\Sigma})}{i(T)}$ for $A(T) = T(\AA_K)/\overline{T(K)}$, which is used in the verification of the conjectural constant. Generalising, we deduce from Lemma~\ref{lem:br} and the isomorphism $\Sh(T) \cong \B(T)^\sim$ \cite[\S4.2.3]{LOU} that
\[
\left|\calV_{T,\m,\geom}^*\right| = \frac{\left|\Br_1(X_{\Sigma},D_{\Sigma,\m})^{\mathbf{K}_{\m,\geom}^*}/\Br(K)\right|}{|\Sh(T)|}.
\]
The finiteness of the number of contributing characters, thus this quantity, is ensured by an appeal to global class field theory as in \cite[Proof~of~Lem.~5.20]{STR}. We have
\[
\sum_{\chi \in \calV_{T,\m,\geom}^*}\widehat{H}_{\m}(\delta^*_{\m,\geom},\chi;-\mathbf{s}) = \sum_{\chi \in \calV_{T,\m,\geom}^*} \int_{T(\mathbb{A}_K)}\frac{\delta^*_{\m,\geom}(t)\chi(t)}{H_{\m}(\mathbf{s},t)}d\mu = \int_{T(\mathbb{A}_K)}\frac{\delta^*_{\m,\geom}(t)}{H_{\m}(\mathbf{s},t)}\sum_{\chi \in \calV_{T,\m,\geom}^*}\chi(t)d\mu.
\]
By character orthogonality, this expression is equal to the integral
\begin{equation} \label{eq:contint}
|\calV_{T,\m,\geom}^*|\int_{T(\mathbb{A}_K)^*_{\m,\geom}}\frac{1}{H_{\m}(\mathbf{s},t)}d\mu,
\end{equation}
where
\[
T(\mathbb{A}_K)_{\m,\geom}^*:= \{t \in T(\mathbb{A}_K): \delta^*_{\m,\geom}(t) = \chi(t) = 1 \text{ for all $\chi \in \calU_{T,\m,\geom}^*$}\}.
\]

By Lemma~\ref{lem:br}, we deduce that 
\[
T(\mathbb{A}_K)_{\m,\geom}^* = T(\AA_K)^{\Br_1(X_{\Sigma},D_{\Sigma,\mathbf{m}})^{\mathbf{K}_{\m,\geom}^*}},
\]
where $\Br_1(X_{\Sigma},D_{\Sigma,\mathbf{m}})^{\mathbf{K}_{\m,
\geom}^*} = \{\calA \in \Br_1(X_{\Sigma},D_{\Sigma,\mathbf{m}}): \chi_{\calA} \in \calU_{T,\m,\geom}^*\}$.

Using $\left|H^1(G,X^*(\overline{T}))\right|L(X^*(\overline{T}),1) = \vol\left(T(\AA_K)^1/T(K)\right)\left|\Sh(T)\right|$ \cite[\S3.5]{ONO} and reinterpreting the conjectural leading constant as in \cite[Thm.~8.6]{CLTBT}, it suffices to show
\[
\begin{aligned}
&\lim_{\mathbf{s} \rightarrow \mathbf{1}}(\mathbf{s} - \mathbf{1})^{\rank(\Pic(X_{\Sigma}))}\int_{T(\AA_K)^{\Br_1(X_{\Sigma},D_{\Sigma,\mathbf{m}})^{\mathbf{K}_{\m,\geom}^*}}}\frac{1}{H_{\m}(\mathbf{s},t)}d\mu \\
= &L^*(X^*(\overline{T}),1)\lim_{S'}\tau_{X_{\Sigma},D_{\Sigma,\m},S'}\left(\left(\prod_{v \in S'}(X_{\Sigma},D_{\Sigma,\m})_{\geom}^*(\O_v) \cap T(K_v)\right)^{\Br_1(X_{\Sigma},D_{\Sigma,\m})^{\mathbf{K}^*_{\m,\geom}}}\right).
\end{aligned}
\]

The factor $L^*(X^*(\overline{T}),1)$ arises from the convergence factors of $d\mu$, and the remaining Tamagawa piece arises as in \cite[Thm.~8.6]{CLTBT}, with the sole difference that $d\mu$ is not normalised as the Haar measure is in \emph{loc.\ cit.}

The positivity of the integral in \eqref{eq:contint}, thus of the leading constant, can be verified as in \cite[Prop.~5.22]{STR}. We sketch the argument: introduce the refined indicator functions $\theta_{\m,\geom,v}: T(K_v) \rightarrow \{0,1\}$, $v \not\in S$ such that $\theta_{\m,\geom,v}(P) = 1$ iff $n_w(\overline{\calT}_j,P) \in \{0,m_i\}$ for each $w \mid v \in \Omega_E$, $j \in \{1,\dots,r\}$ and $\overline{T}_j$ a component of $D_i$. Denote the induced adelic indicator function by $\theta_{\m,\geom}$. Then $T(\AA_K)_{\m,\geom}^* \supset T(\AA_K)^{\theta_{\m,\geom}}:= \{(t_v)_v \in T(\AA_K): \theta_{\m,\geom}((t_v)_v) = 1\}$. Then it suffices to note that $\lim_{\mathbf{s} \rightarrow \mathbf{1}}\widehat{H}_{\m}(\theta_{\m,\geom},1;-\mathbf{s}) > 0$ (the limit being taken along the real line in each copy of $\CC$) as, mimicking the proof of Proposition~\ref{prop:geomreg}, we have
\[
\widehat{H}_{\m}(\theta_{\m,\geom},1;-\mathbf{s}) = \prod_{i=1}^r \zeta_{K_i}(s_i) \widetilde{G}^*_{\m,\geom}(\mathbf{s})
\]
for $\widetilde{G}^*_{\m,\geom}(\mathbf{s})$ a function holomorphic on $\Re(\mathbf{s}) \geq \frac{\m}{\m + \mathbf{1}}$ with $\widetilde{G}^*_{\m,\geom}(\mathbf{1}) \neq 0$.
\end{proof}

\begin{proof}[Proof of Theorem~\ref{thm:camp}]
The proof is almost identical to that of Theorem~\ref{thm:geom}: we replace the appeal to Proposition~\ref{prop:geomreg} by one to Corollary~\ref{cor:campreg}, and for the remainder of the argument, we replace all functions and groups by their non-geometric counterparts.
\end{proof}

\begin{proof}[Proof of Corollary~\ref{cor:split}]
By Lemma~\ref{lem:snc}, the orbifold $(X_\Sigma,D_{\Sigma,\m})$ is smooth when $X_{\Sigma}$ is split. The result then follows from Theorem~\ref{thm:camp}.
\end{proof}
\bibliographystyle{halpha-abbrv}
\bibliography{refs}
\end{document}